\newtheorem{lemma}{Lemma}
\newtheorem{remark}{Remark}
\theoremstyle{definition}
\newtheorem{defin}{Definition}
\newtheorem{prop}[lemma]{Proposition}
\newtheorem{cor}[lemma]{Corollary}
\newtheorem{theor}{Theorem}
\theoremstyle{definition}
\begin{document}
\title{Symmetric finite representability of $\ell^p$-spaces in rearrangement invariant spaces on $(0,\infty)$.}
\thanks{{\rm *}The work was completed as a part of the implementation of the development program of the Scientific and Educational Mathematical Center Volga Federal District, agreement no. 075-02-2020-1488/1.}
\author[Astashkin]{Sergey V. Astashkin}
\address[Sergey V. Astashkin]{Department of Mathematics, Samara National Research University, Moskovskoye shosse 34, 443086, Samara, Russia
}
\email{\texttt{astash56@mail.ru}}
\maketitle

\vspace{-7mm}

\begin{abstract}
For a separable rearrangement invariant space $X$ on $(0,\infty)$ of fundamental type we identify the set of all $p\in [1,\infty]$ such that $\ell^p$ is finitely represented in $X$ in such a way that the unit basis vectors of $\ell^p$ ($c_0$ if $p=\infty$) correspond to pairwise disjoint and equimeasurable functions. This characterization hinges upon a description of the set of approximate eigenvalues of the doubling operator $x(t)\mapsto x(t/2)$ in $X$. We prove that this set is surprisingly simple: depending on the values of some dilation indices of such a space, it is either an interval or a union of two intervals. We apply these results to the Lorentz and Orlicz spaces.  
\end{abstract}

\footnotetext[1]{2010 {\it Mathematics Subject Classification}: 46B70, 46B42.}
\footnotetext[2]{\textit{Key words and phrases}: $\ell^p$, finite representability, Banach lattice, rearrangement invariant space, dilation operator, shift operator, approximate eigenvalue, Boyd indices, Orlicz space, Lorentz space}

%%%%%%%%%%%%%%%%%%%%%%%%%%%%% Section 1
\newcommand{\fg}{\mathbb N}
\newcommand{\gh}{\mathbb R}
\newcommand{\hj}{\mathbb Z}
\def\mn{\tau_{\lambda}}
\def\op{{\rm Im}\,\tau_{\lambda}}
\def\zw{{\rm Ker}\,f_\lambda}

\section{Introduction.}
\label{Intro}

In 1974, Tsirelson \cite{Tsi} constructed an example of a Banach space containing no isomorphic copies of $\ell^p$, $1\le p<\infty,$ and $c_0$. Two years later, Krivine proved a theorem showing an essential difference between {\it global} and {\it local} properties of Banach spaces, that is, between properties of their infinite-dimensional subspaces  and subspaces of finite (though large) dimension. To state and discuss this result, we define some notions.  

\begin{defin}
\label{def1}
Suppose $1\le p\le\infty$, $X$ is a Banach space,  and $\{z_i\}_{i=1}^\infty$ is a bounded sequence in $X$. The space $\ell^p$ is said to be {\it block finitely represented in $\{z_i\}_{i=1}^\infty$} if for every
$n\in\mathbb{N}$ and $\varepsilon>0$ there exist $0=m_0<m_1<\dots <m_n$ and $\alpha_i\in\mathbb{R}$ such that the vectors $u_k=\sum_{i=m_{k-1}+1}^{m_k}\alpha_iz_i$, $k=1,2,\dots,n$, satisfy the inequality
\begin{equation}
\label{eq: blocks}
(1+\varepsilon)^{-1}\|a\|_p\le \Big\|\sum_{k=1}^n
a_ku_k\Big\|_X\le (1+\varepsilon)\|a\|_p
\end{equation}
for arbitrary $a=(a_k)_{k=1}^n\in\mathbb{R}^n$. In what follows, as usual,
$$
\|a\|_p:=\Big(\sum_{k=1}^n
|a_k|^p\Big)^{1/p} \enskip\text{if}\enskip 
p<\infty,\enskip\text{and}\enskip
\|a\|_\infty:=\sup_{k=1,2,\dots,n}|a_k|
$$
(including the case when $n=\infty$).
\end{defin}

\begin{defin}
Let $X$ be a Banach space, and let $1\le p\le\infty$. The space $\ell^p$ is said to be {\it finitely
represented} in $X$ if for every $n\in\mathbb{N}$ and $\varepsilon>0$ there exist
$x_1,x_2,\dots,x_n\in X$ such that for every $a=(a_k)_{k=1}^n$
\begin{equation}\label{eq: 1}
(1+\varepsilon)^{-1}\|a\|_p\le \Big\|\sum_{k=1}^n
a_kx_k\Big\|_X\le (1+\varepsilon)\|a\|_p.
\end{equation}
%for arbitrary $a_1,a_2,\dots,a_n\in\mathbb{C}$. 
\end{defin}

Long before Tsirelson's example, in 1961, Dvoretzky proved his  celebrated theorem (see \cite{Dvor}, \cite[Theorem 5.8]{MilSch} or \cite[Theorem 11.3.13]{AK}), showing that $\ell^2$ is finitely represented in an arbitrary infinite-dimensional Banach space $X$. Clearly, if $\ell^p$ is block finitely represented in some sequence $\{z_i\}_{i=1}^\infty\subset X$, then $\ell^p$ is finitely represented in $X$. Hence, the following famous result proved by Krivine in \cite{Kriv} (see also \cite{Ros} and \cite[Theorem 11.3.9]{AK}) became a very valuable supplement to the Dvoretzky theorem.

%\begin{theor}[\cite[теорема 11.3.9]{AK}]\label{Th:Krivine1} 
\begin{theor}\label{Th:Krivine1} 
Let $\{z_i\}_{i=1}^\infty$ be an arbitrary normalized sequence in a Banach space $X$ such that the vectors $z_i$ do not form a relatively compact set. Then $\ell^p$ is block finitely
represented in $\{z_i\}_{i=1}^\infty$ for some $p$, $1\leq p\leq 
\infty$.
\end{theor}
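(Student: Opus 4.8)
\medskip
\noindent\emph{Proof proposal.}
The plan is to replace $\{z_i\}_{i=1}^\infty$ by a highly structured subsequence and then run the classical blocking argument. First note that block finite representability passes from any ``sub-block-basis'' back to $\{z_i\}_{i=1}^\infty$: if $w_k=\sum_{i\in I_k}\beta_i z_i$ with $I_1<I_2<\dots$ finite intervals, then any block of the $w_k$ over successive intervals is again a block of the $z_i$ over successive intervals (pad with zero coefficients). Since the set $\{z_i:i\in\mathbb N\}$ is not relatively compact, no subsequence of $\{z_i\}_{i=1}^\infty$ is norm-convergent; by Rosenthal's $\ell^1$-theorem some subsequence is either equivalent to the unit vector basis of $\ell^1$ or weakly Cauchy, and in the latter case, not being norm-Cauchy, it admits a weakly null and seminormalized sequence of successive differences, hence a basic subsequence by the Bessaga--Pe\l czy\'nski selection principle. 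In all cases we obtain a seminormalized basic sequence whose terms are blocks of $\{z_i\}_{i=1}^\infty$ over successive intervals, so after renormalizing we may assume $\{z_i\}_{i=1}^\infty$ is itself a normalized basic sequence.

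Next I would pass, via the Brunel--Sucheston theorem, to a further subsequence for which
\[
\|(a_j)_{j=1}^k\|_\sigma:=\lim_{n_1<\dots<n_k\to\infty}\Big\|\sum_{j=1}^k a_j z_{n_j}\Big\|_X
\]
exists for every $k$ and all scalars. This yields a $1$-spreading (shift-invariant) normalized basic norm on $c_{00}$, with spreading model basis $(\tilde e_j)_{j=1}^\infty$. It suffices to block finitely represent $\ell^p$ in the abstract space $(c_{00},\|\cdot\|_\sigma)$: given blocks $u_k=\sum_i\alpha_i\tilde e_i$ over disjoint intervals with $\|\sum_k a_k u_k\|_\sigma$ within $1+\varepsilon$ of $\|a\|_p$ for every $a$ in the (compact) unit sphere of $\mathbb R^n$, one recovers \eqref{eq: blocks} in $X$ by taking the index sets in $\{z_i\}_{i=1}^\infty$ to be sufficiently far-apart intervals and using that $\|\cdot\|_\sigma$ is the locally uniform limit of the corresponding norms in $X$.

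It now remains to prove Krivine's lemma: \emph{every $1$-spreading normalized basic sequence block finitely represents $\ell^p$ (with $c_0$ if $p=\infty$) for some $p\in[1,\infty]$}. I would argue by a blocking/fixed-point scheme. Let $\mathcal N$ be the set of $1$-spreading norms $\nu$ on $c_{00}$ with $\nu(e_1)=1$ and $c\,\|x\|_{c_0}\le\nu(x)\le\|x\|_{\ell^1}$ for a fixed $c>0$ (the basis constant is preserved throughout); $\mathcal N$ is compact in the topology of pointwise convergence on $c_{00}$. For $k\ge2$ define the blocking operator $T_k$ sending $\nu$ to the renormalized norm of the successive length-$k$ blocks $e_{(j-1)k+1}+\dots+e_{jk}$; then $T_k$ maps $\mathcal N$ to $\mathcal N$ and $T_j\circ T_k=T_{jk}$. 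Starting from $\nu_0=\|\cdot\|_\sigma$ and iterating along $k=2,4,8,\dots$, interleaved with a diagonal (Ramsey-type) extraction so that the limiting norm is actually realized by blocks of the original sequence up to $1+\varepsilon$, one extracts an accumulation point $\nu_*\in\mathcal N$ invariant under $T_2$, hence under every $T_k$. The final, computational step is to show that a $T_2$-invariant member of $\mathcal N$ is an $\ell^p$-norm: analyzing $\varphi(t)=\nu_*(e_1+te_2)$, invariance under $T_2$ forces a functional equation whose only admissible solutions are $\varphi(t)=(1+t^p)^{1/p}$ and $\varphi(t)=\max\{1,t\}$, and the spreading property then upgrades this to $\nu_*(a_1,\dots,a_n)=\|a\|_p$ for all $n$. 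Unwinding the two reductions gives block finite representability of $\ell^p$ in $\{z_i\}_{i=1}^\infty$.

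The hard part is exactly this last step: \emph{a priori the orbit $\{T_{2^m}\nu_0\}$ need not converge}. Spreading models of Tsirelson-type spaces are neither $\ell^p$ nor $c_0$ and carry $\ell^1$- and $c_0$-structure at different scales, so $p$ cannot simply be read off as, say, the exponential growth rate of $\|e_1+\dots+e_n\|_\sigma$ (which fails to be submultiplicative in general). What makes the argument go through is the rigidity of $T_2$-fixed points together with a simultaneous stabilization of the whole orbit over all coefficient vectors and all dimensions at once --- this is the essence of Krivine's original ``Krivine function'' argument, later streamlined by Lemberg via ultraproducts and by Rosenthal via a direct combinatorial construction. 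A minor additional subtlety is that spreading models need not be unconditional, so the blocking scheme must be run for $1$-spreading, rather than merely $1$-subsymmetric, bases.
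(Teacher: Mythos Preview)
The paper does not prove this theorem; it is quoted as background with references to Krivine, Rosenthal, and Albiac--Kalton, so there is no proof in the paper to compare against.

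Your outline follows the standard modern route (reduce to a basic sequence, pass to a Brunel--Sucheston spreading model, then prove Krivine's lemma for $1$-spreading sequences), and the first two reductions are essentially correct. One small repair: consecutive differences $z_{n_{k+1}}-z_{n_k}$ share the index $n_k$, so they are not blocks over \emph{disjoint} successive intervals in the sense of Definition~\ref{def1}; take every other difference $z_{n_{2k}}-z_{n_{2k-1}}$ instead (and pass to a further subsequence to guarantee seminormalization, since a weakly Cauchy, non-norm-Cauchy sequence need not have \emph{all} successive differences bounded below).

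The genuine gap is in your third stage, and although you flag it, your proposed mechanism does not close it. Extracting an accumulation point of the orbit $\{T_{2^m}\nu_0\}$ in the compact set $\mathcal N$ does \emph{not} yield a $T_2$-invariant norm: accumulation points of orbits under a continuous self-map of a compact space are generally not fixed (an irrational rotation of the circle is the standard counterexample). This is precisely why Krivine's original ``Krivine function'' argument, Lemberg's ultraproduct/spectral proof, and Rosenthal's combinatorial construction exist --- each supplies a different, nontrivial device for producing the invariant object (e.g.\ Lemberg finds an approximate eigenvector of the doubling operator rather than a limit of iterates). Your sketch names these sources but does not implement any of them, so the argument stops at the crux. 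A secondary issue: even granted a $T_2$-fixed $1$-spreading norm $\nu_*$, the step from the two-variable identity for $\varphi(t)=\nu_*(e_1+te_2)$ to $\nu_*=\|\cdot\|_p$ on all finite tuples requires justification; $T_2$-invariance gives $\nu_*(1,1,t,t)=\lambda\,\nu_*(1,t)$, which is not immediately a closed functional equation in $\varphi$ alone, and the spreading property does not by itself propagate a pairwise identity to arbitrary $n$.
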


Here, we consider a special class of Banach lattices, namely,  rearrangement invariant (in other terminology, symmetric) function spaces on $(0,\infty)$ (for all definitions see the next section). The main aim of this paper is to find a description of the set of all $p$ such that $\ell^p$ is finitely represented in such a space so that the unit basis vectors of $\ell^p$ correspond to equimeasurable and pairwise disjoint functions.

Measurable functions $x(t)$ and $y(t)$ on $(0,\infty)$ are called {\it equimeasurable} if
$$
m\{s>0:\,|x(s)|>\tau\}=m\{s>0:\,|y(s)|>\tau\}\;\;\mbox{for all}\;\;\tau>0.$$
Here and in the sequel, $m$ denotes the Lebesgue measure. 

\begin{defin}
Let $X$ be a rearrangement invariant space on $(0,\infty)$, $1\le p\le\infty$. We say that $\ell^p$ is {\it symmetrically finitely represented} in $X$ if for every $n\in\fg$ and each $\varepsilon>0$ there exist equimeasurable functions $x_k\in X$, $k=1,2,\dots,n$, such that ${\rm supp}\,x_i\cap {\rm supp}\,x_j=\varnothing$, $i\ne j$, and for any $a=(a_k)_{k=1}^n$ 
\begin{equation}
\label{eq0} (1+\varepsilon)^{-1}\|a\|_p\le \Big\|
\sum_{k=1}^na_kx_k\Big\|_X\le (1+\varepsilon) \|a\|_p.
\end{equation}
\end{defin}

Let $\{z_i\}_{i=1}^\infty$ be a sequence in a rearrangement invariant space $X$ on $(0,\infty)$. If for all $n$ and $\epsilon>0$ we can find  blocks $u_k=\sum_{i=m_{k-1}+1}^{m_k}\alpha_iz_i$, $k=1,2,\dots,n$, as in Definition \ref{def1}, which are being pairwise disjoint,  equimeasurable functions and satisfy \eqref{eq: blocks}, we say that $\ell^p$ is {\it symmetrically block finitely represented} in the sequence $\{z_i\}$.

We introduce also a weaker notion.

\begin{defin}
The space $\ell^p$ is {\it crudely symmetrically finitely represented} in a rearrangement invariant function space $X$ if there exists a constant $C>0$ such that for every $n\in\fg$ we can find 
equimeasurable functions $x_k\in X$, $k=1,2,\dots,n$, such that ${\rm supp}\,x_i\cap {\rm supp}\,x_j=\varnothing$, $i\ne j$, and for every $a=(a_k)_{k=1}^n$ 
\begin{equation*}
\label{eq0} C^{-1}\|a\|_p\le \Big\|
\sum_{k=1}^na_kx_k\Big\|_X\le C\|a\|_p.
\end{equation*}
\end{defin}

The set of all $p\in [1,\infty]$ such that  $\ell^p$ is symmetrically finitely represented (resp. crudely symmetrically finitely represented) in $X$ we will denote by ${\mathcal F}(X)$ (resp. ${\mathcal F}_c(X)$).
Observe that knowledge of the structure of the above sets may be rather useful when studying various properties of rearrangement invariant spaces and  operators bounded in these spaces (see e.g. \cite{A16}, \cite{AMT-13}). 

In 1978,  Rosenthal \cite{Ros} proved the following version of Theorem \ref{Th:Krivine1}.

\begin{theor}\label{Th:Rosenthal} 
Let $\{e_i\}_{i=1}^\infty$ be a subsymmetric unconditional basis for a Banach space $X$ such that $c_0$ is not finitely
represented in $X$. If the numbers $s_n$ are defined by $\|\sum_{i=1}^ne_i\|_X=n^{1/s_n}$, then $\ell^p$ is block finitely represented in $\{e_i\}$ provided that  
$$
p\in[\liminf_{n\to\infty}s_n,\limsup_{n\to\infty}s_n].$$ 
\end{theor}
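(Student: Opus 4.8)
My plan is as follows. Write $\varphi(n):=\|\sum_{i=1}^n e_i\|_X$. Subsymmetry and unconditionality of $\{e_i\}$ force $\varphi$ to be quasiconcave (nondecreasing, with $\varphi(n)/n$ nonincreasing) and subadditive, and they give $\|\sum_{i\in A}\theta_ie_i\|_X=\varphi(|A|)$ for every finite $A\subset\mathbb N$ and all signs $\theta_i=\pm1$; in particular $s_n=\log n/\log\varphi(n)$, so $1/\overline s=\liminf_n\log\varphi(n)/\log n$ and $1/\underline s=\limsup_n\log\varphi(n)/\log n$. I would first record that $[\underline s,\overline s]$ is compact: if $\inf_N\varphi(2^KN)/\varphi(N)=1$ for every $K$, then the normalized blocks $\varphi(N)^{-1}\sum_{i\in B_k}e_i$ over disjoint sets $B_k$ of a suitable common size would satisfy $\|a\|_\infty\le\|\sum_ka_k\varphi(N)^{-1}\sum_{i\in B_k}e_i\|_X\le(1+\varepsilon)\|a\|_\infty$, so $c_0$ would be block---hence---finitely represented in $X$; thus there exist $K,\delta>0$ with $\varphi(2^KN)\ge(1+\delta)\varphi(N)$ for all $N$, whence $\varphi(n)\gtrsim n^c$ with $c=\log_2(1+\delta)/K>0$ and $\overline s\le 1/c<\infty$.

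Fix now $p\in[\underline s,\overline s]$, $n\in\mathbb N$ and $\varepsilon>0$; I would construct disjointly supported blocks $u_1<u_2<\dots<u_n$ of the form $u_k=\sum_{i\in B_k}\alpha_i^{(k)}e_i$ over consecutive finite sets $B_k$, satisfying \eqref{eq: blocks}. The upper estimate there should be routine, via the triangle inequality and the Abel transform together with quasiconcavity of $\varphi$, so the content is the matching lower estimate. The idea is to choose the sizes $|B_k|$ and the coefficients so that, after rearranging the coefficient multiset of $\sum_ka_ku_k$ into decreasing order and using that $\|\sum_jc_je_j\|_X$ is squeezed between the Marcinkiewicz functional $\sup_jc_j^*\varphi(j)$ and the Lorentz functional $\sum_jc_j^*(\varphi(j)-\varphi(j-1))$, both bounds collapse to $(1+\varepsilon)^{\pm1}\|a\|_p$. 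Using unconditionality and subsymmetry to reduce to monotone $a$ and then to $a=\mathbf 1$, this boils down to producing scales $N_1\le\dots\le N_n$ with $\varphi(N_1+\dots+N_j)/\varphi(N_j)\approx j^{1/p}$ for every $j\le n$, plus a mild regularity requirement (roughly, $\varphi(N_j)$ growing like $j^{1/p}$) which forces the Marcinkiewicz and Lorentz sides to agree up to $1+\varepsilon$. If $\varphi$ were power-type of exponent $1/p$ on the relevant range this would be immediate with $N_j\asymp j$; the trouble is that $\varphi$ need not be power-type at \emph{any} scale --- its local exponent may oscillate between $1/\overline s$ and $1/\underline s$ --- which is why the blocks have to be spread over a long band of scales whose averaged exponent is $1/p$, and why non-constant coefficients within blocks are needed.

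The combinatorial heart is then a statement about $\psi(t):=\log\varphi(\lfloor e^t\rfloor)$, which is nondecreasing with $\psi(t)-t$ nonincreasing (by quasiconcavity) and satisfies $\limsup_{t\to\infty}\psi(t)/t=1/\underline s$, $\liminf_{t\to\infty}\psi(t)/t=1/\overline s$: for $1/p$ strictly between $1/\overline s$ and $1/\underline s$, and any prescribed band length $R$ and tolerance $\eta$, there are arbitrarily large $t_0$ from which $\psi$ has averaged slope within $\eta$ of $1/p$ over $[t_0,t_0+R]$, with enough regularity to run the previous step. I would prove this by an intermediate-value argument: $\psi(t)/t$ varies slowly ($\psi$ being $1$-Lipschitz from above, $\psi(t)/t$ cannot jump) and oscillates, coming arbitrarily close to both $1/\overline s$ and $1/\underline s$ on arbitrarily long runs, so subdividing such a run and passing through the intermediate value $1/p$ yields the required band of scales to feed into the construction. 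The endpoints $p=\underline s$ and $p=\overline s$ I would handle by approximation from the open interval, together with a diagonal argument showing that the set of $p$ for which $\ell^p$ is block finitely represented in $\{e_i\}$ is closed. The step I expect to be the real obstacle is exactly this interior case: arranging, with constants uniform in $n$, that the partial-sum ratios $\varphi(N_1+\dots+N_j)/\varphi(N_j)$ look like $j^{1/p}$ for \emph{every} $j\le n$ although $\varphi$ carries the exponent $1/p$ only on average across the whole band --- i.e., converting ``average exponent $1/p$ over a long band'' into an honest $\ell^p_n$-structure --- and then verifying the lower estimate $\|\sum_ka_ku_k\|_X\ge(1+\varepsilon)^{-1}\|a\|_p$ in that regime.
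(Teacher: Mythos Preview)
The paper does not contain a proof of this theorem: it is quoted from Rosenthal \cite{Ros} and used only as background and motivation (see the discussion following its statement in Section~\ref{Intro}). There is therefore no ``paper's own proof'' to compare your proposal against.

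On the substance of your outline: the step you flag as the real obstacle is indeed where the argument, as written, breaks. The Lorentz--Marcinkiewicz squeeze
\[
\sup_{j} c_j^{*}\varphi(j)\ \le\ \Big\|\sum_j c_j e_j\Big\|_X\ \le\ \sum_j c_j^{*}\big(\varphi(j)-\varphi(j-1)\big)
\]
is valid, but the two endpoints are in general far apart (for $\varphi(n)=n^{1/p}$ they are the $\ell^{p,\infty}$ and $\ell^{p,1}$ norms), so the claim that ``both bounds collapse to $(1+\varepsilon)^{\pm1}\|a\|_p$'' needs a mechanism you have not supplied. For that collapse to occur the rearranged coefficient sequence $(c_j^{*})$ of $\sum_k a_k u_k$ must satisfy $c_j^{*}\varphi(j)\approx\text{const}$ over the whole support, uniformly in the choice of $a=(a_k)$; arranging this simultaneously for \emph{all} $a\in\mathbb{R}^n$ by one fixed choice of blocks is not achievable via the intermediate-value information ``$\psi$ has average slope $1/p$ over a long band'', which controls only $\varphi$ at the two endpoints of the band and says nothing about its shape inside.

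Rosenthal's route (and the analogous one developed in this paper for r.i.\ function spaces) avoids this by working with the doubling map rather than with the two extremal functionals. One produces a single normalized block $u$ for which the norm of the sum of two disjoint, identically distributed copies of $u$ is $(2^{1/p}\pm\varepsilon)\|u\|$; iterating gives $n$ disjoint copies whose sum has norm $\approx n^{1/p}$, and unconditionality plus an extreme-point reduction then yields \eqref{eq: blocks} for arbitrary coefficients. The intermediate-value/oscillation idea you describe is exactly what locates such an approximate eigenvector of the doubling operator, but it should be applied to the sequence $\varphi(2N)/\varphi(N)$ (or, more generally, to $\|\sigma u\|/\|u\|$ along a net of candidate blocks), not to the global ratio $\psi(t)/t$. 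That reformulation is what turns ``average exponent $1/p$'' into an honest $\ell^p_n$-structure.
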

Moreover, as it follows from an inspection of the proof of this theorem (see also the remark after the proof of Theorem 3.3 in \cite{Ros}), for each $p$ satisfying the hypothesis, any $n$ and $\epsilon>0$ there exist blocks $u_k$, $k=1,2,\dots,n$, of the basis $\{e_i\}$, with the same ordered distribution, such that we have \eqref{eq: blocks} (blocks $u=\sum_{i=1}^{m}\alpha_ie_{n_i}$ and $v=\sum_{i=1}^{m}\beta_ie_{k_i}$ have the same ordered distribution if $\beta_i=\alpha_{\pi(i)}$,  $i=1,2,\dots,m$, for some permutation $\pi$ of the set $\{1,2,\dots,m\}$; see \cite{Ros}). 

Theorem \ref{Th:Rosenthal} suggests that a condition ensuring that $\ell^p$ is block finitely represented in a subsymmetric unconditional (in particular, symmetric) basis $\{e_i\}$ of a Banach space so that the unit basis vectors of $\ell^p$ correspond to blocks of this basis  with the same ordered distribution can be expressed by using some estimates for the norms of an appropriate dilation operator when it is restricted to the set $\{e_i\}$. Clearly, any rearrangement invariant space on $(0,\infty)$ fails to have such a basis. However, as we show here, a careful studying properties of the doubling operator $\sigma x(t):=x(t/2)$, allows us to establish a complete description of the set ${\mathcal F}(X)$ for a wider class of separable rearrangement invariant spaces on $(0,\infty)$. 

Next, a key role will be played by the following notion.

\begin{defin}
Let  
%over $\gh$ 
$T$ be a bounded linear operator on a Banach space $X$. A sequence
$\{u_n\}_{n=1}^\infty\subset
X$, $\|u_n\|=1$, $n=1,2,\dots$, is called an {\it approximate eigenvector} corresponding to an {\it approximate eigenvalue} $\lambda\in\gh$ for $T$ if
$\|Tu_n-\lambda u_n\|_X\to 0$ \footnote{In what follows, we deal only with real Banach spaces. Note however that every bounded linear  operator in a Banach spaces over the complex field has at least one approximate eigenvalue (see \cite[12.1]{MilSch})}. 
\end{defin}

We will make use of the following characterization of the set ${\mathcal F}(X)$ for a separable rearrangement invariant space on $(0,\infty)$ obtained in the paper \cite{A-11}.

\begin{theor}
\label{Theorem 1}
Let $X$ be an arbitrary separable rearrangement invariant space on $(0,\infty)$. The following conditions are equivalent:

(a) $p\in{\mathcal F}(X)$;

(b) $p\in{\mathcal F}_c(X)$;

 (c) $2^{1/p}$ is an approximate eigenvalue of the doubling operator $\sigma$\footnote{In \cite{A-11}, it is proved that $(a)\Longleftrightarrow (c)$; the equivalence of these conditions to (b) follows immediately from the proofs given there.}.
\end{theor}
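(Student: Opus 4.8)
The plan is to prove the cycle $(a)\Rightarrow(b)\Rightarrow(c)\Rightarrow(a)$. The first implication is trivial: take $\varepsilon=1$ in the definition of symmetric finite representability, so that $(b)$ holds with $C=2$. All the substance is in $(b)\Rightarrow(c)$ --- which, together with the previous sentence, also yields $(a)\Rightarrow(c)$ --- and in $(c)\Rightarrow(a)$.

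For $(b)\Rightarrow(c)$ I would argue as follows, taking $p<\infty$ (the case $p=\infty$, where $2^{1/p}=1$ and $c_0$ replaces $\ell^p$, is parallel). Put $\lambda=2^{1/p}$, fix $M\in\mathbb N$, and, using the hypothesis with $n=2^M$, choose pairwise disjoint equimeasurable $x_1,\dots,x_{2^M}$ with $C^{-1}\|a\|_p\le\|\sum_k a_kx_k\|_X\le C\|a\|_p$. The norm of a disjoint sum of equimeasurable functions depends only on the moduli of the coefficients and on the common distribution, so none of these norms change if the $x_k$ are replaced by any equimeasurable disjoint family; I use this freedom to fix $Q_0$ equimeasurable with $x_1$ but supported on a single dyadic block $(a,2a)$, and to set $Q_j:=\sigma^jQ_0$. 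Then $Q_j$ is supported on $(2^ja,2^{j+1}a)$, so the $Q_j$ are pairwise disjoint; and since $\sigma$ doubles the Lebesgue measure of every level set, $Q_j$ is equimeasurable with $2^j$ disjoint copies of $Q_0$, whence $\|Q_j\|_X$ equals the norm of a sum of $2^j$ of the $x_k$ with unit coefficients, so $C^{-1}2^{j/p}\le\|Q_j\|_X\le C2^{j/p}$ for $j\le M$.

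The approximate eigenvector is then $v:=\sum_{j=0}^{M-1}\lambda^{-j}Q_j$. On one hand, $v$ is equimeasurable with a disjoint superposition of copies of $Q_0$ in which the coefficient $\lambda^{-j}$ occurs with multiplicity $2^j$; since $\sum_{j<M}2^j<2^M$, this superposition is realized by the $x_k$, so $\|v\|_X\asymp\big(\sum_{j<M}2^j\lambda^{-jp}\big)^{1/p}=M^{1/p}$. On the other hand $\sigma Q_j=Q_{j+1}$, so the sum telescopes: $\sigma v-\lambda v=\lambda\big(\lambda^{-M}Q_M-Q_0\big)$, and using $\|Q_M\|_X\le C2^{M/p}=C\lambda^{M}$ and $\|Q_0\|_X=\|x_1\|_X\le C$ one gets $\|\sigma v-\lambda v\|_X\le 2C\lambda=O(1)$. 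Hence $\|\sigma v-\lambda v\|_X/\|v\|_X\to0$ as $M\to\infty$, so the normalized vectors $v/\|v\|_X$ form an approximate eigenvector of $\sigma$ corresponding to the approximate eigenvalue $2^{1/p}$; this is $(c)$.

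For $(c)\Rightarrow(a)$ I would start from an approximate eigenvector $u$ of $\sigma$ for $2^{1/p}$, fix $n$ and $\varepsilon$, and reduce --- replacing $u$ by $|u|$, using separability of $X$, and iterating $\|\sigma u-2^{1/p}u\|_X<\delta$ with $\delta$ small in terms of $n,\varepsilon$ --- to a bounded $u\ge0$ supported on finitely many dyadic annuli with $\|\sigma^ju-2^{j/p}u\|_X$ negligible for $j\le n$. Examining the annular components of $\sigma u\approx 2^{1/p}u$ suggests that consecutive annular pieces of $u$ should be, modulo errors, $2^{-1/p}$-scaled dilates of one another, i.e. that $u$ is approximately a disjoint sum $\sum_l 2^{-l/p}\widetilde w_l$ with each $\widetilde w_l$ a dilate of a single reference function $w$; one then groups the annuli into $n$ consecutive blocks, rescales and dilates each block so that the $n$ resulting functions become equimeasurable, translates them onto pairwise disjoint intervals to obtain disjoint equimeasurable $x_1,\dots,x_n\in X$, and verifies \eqref{eq0}. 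This last verification is, I expect, the main obstacle: one must convert the essentially one-dimensional datum ``$2^{1/p}$ is an approximate eigenvalue of $\sigma$'' into a two-sided estimate uniform over \emph{all} coefficient vectors $a$ --- equivalently, control the $X$-norm of an arbitrary disjoint superposition of scaled copies of $w$ --- and in a general rearrangement invariant space the errors generated by the annular recursion need not orthogonalize, so a crude perturbation bound does not suffice. (A natural variant is to prove $(c)\Rightarrow(b)$ in this way with some fixed constant and then get $(b)\Rightarrow(a)$ by a James-type self-improvement of the constants; the key difficulty is the same.)
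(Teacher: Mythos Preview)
The paper does not prove this theorem here; it is quoted from the earlier work \cite{A-11}, with the footnote that $(a)\Leftrightarrow(c)$ is established there and that the equivalence with $(b)$ is immediate from those arguments. So there is no in-paper proof to compare your attempt against.

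On the merits: your $(b)\Rightarrow(c)$ argument is essentially correct and is a clean telescoping construction. The one technical slip is the assumption that $Q_0$, equimeasurable with $x_1$, can be supported on a single dyadic block $(a,2a)$; this requires $m(\mathrm{supp}\,x_1)<\infty$, which is not automatic. However, in a separable r.i.\ space every level set $\{|x_1|>\delta\}$ has finite measure (otherwise $\chi_{(0,\infty)}\in X$, and then $\|\chi_{(n,\infty)}\|_X=\|\chi_{(0,\infty)}\|_X$ for all $n$ contradicts order continuity), so replacing each $x_k$ by $x_k\chi_{\{|x_k|>\delta\}}$ for small $\delta$ repairs this with an arbitrarily small perturbation of the constants.

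Your $(c)\Rightarrow(a)$, as you yourself flag, is only a heuristic outline. The step you identify as the ``main obstacle'' --- converting a single approximate eigenvector of $\sigma$ into a two-sided $\ell^p$-estimate uniform over \emph{all} coefficient vectors $a$ --- is precisely the substantive content of the proof in \cite{A-11}, and it is not a routine perturbation argument: the annular recursion you describe produces errors that do not cancel in an obvious way, and the James-type self-improvement you mention at the end would still need $(c)\Rightarrow(b)$ as input, which faces the same difficulty. Without consulting \cite{A-11} (or supplying an independent argument of comparable depth, e.g.\ via a Krivine--Brunel--Sucheston spreading-model mechanism adapted to equimeasurable disjoint blocks) your sketch does not close the cycle.
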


Thus, to characterize the set ${\mathcal F}(X)$, it suffices to identify the set of approximate eigenvalues of the operator $\sigma$ in $X$. It can be rather easily shown (see e.g. \cite[Lemma~11.3.12]{AK} or  \cite[Theorem~6]{A-11}) that the latter set is contained in the interval $[2^{\alpha_X},2^{\beta_X}]$, where $\alpha_X$ and $\beta_X$ are the Boyd indices of $X$. We are able to get a complete description of this set (which may not coincide with the above interval) for rearrangement invariant spaces of fundamental type. The latter property means, roughly speaking, that each of the norms of the dilation operators $x(t)\mapsto x(2^nt)$, $n\in\hj$, in $X$ can be calculated (up to equivalence with a constant independent of $n$) when these operators are restricted to the set of all characteristic functions. It should be emphasized that this condition is not so restrictive; all the  most known and important rearrangement invariant spaces, e.g., Orlicz, Lorentz, Marcinkiewicz spaces, are of fundamental type.

The main result expressed in terms of appropriate dilation indices (see their definition in the next section) can be stated as follows.

\begin{theor}\label{Theorem 4}
Let $X$ be a rearrangement invariant space on $(0,\infty)$ of fundamental type.  

(i) If ${\alpha_X^\infty}\le {{\beta_X^0}}$, then the set of approximate eigenvalues of the operator $\sigma$ in $X$ is the interval $[2^{\alpha_X},2^{\beta_X}]$.

(ii) If ${\alpha_X^\infty}>{{\beta_X^0}}$, then the set of approximate eigenvalues of $\sigma$ in $X$ is the union  $[2^{\alpha_X},2^{\beta_X^0}] \cup [2^{\alpha_X^\infty},2^{\beta_X}]$.
\end{theor}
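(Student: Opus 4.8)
The plan is to use Theorem~\ref{Theorem 1} to reduce the problem to the determination of the set $\Lambda(\sigma)$ of approximate eigenvalues of $\sigma$ in $X$, which is already known to be contained in $[2^{\alpha_X},2^{\beta_X}]$. First I would record two elementary facts valid for spaces of fundamental type, $\alpha_X=\min(\alpha_X^0,\alpha_X^\infty)$ and $\beta_X=\max(\beta_X^0,\beta_X^\infty)$, and observe that statements (i) and (ii) are the two cases of the single assertion
\[
\Lambda(\sigma)=[2^{\alpha_X},2^{\beta_X^0}]\cup[2^{\alpha_X^\infty},2^{\beta_X}],
\]
the two intervals overlapping, and the union collapsing to $[2^{\alpha_X},2^{\beta_X}]$, exactly when $\alpha_X^\infty\le\beta_X^0$. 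It therefore remains to establish the two inclusions.

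For ``$\supseteq$'' I would not produce approximate eigenvectors of $\sigma$ directly, but instead use the equivalence $(a)\Leftrightarrow(c)$ of Theorem~\ref{Theorem 1} and exhibit a symmetric finite representation of $\ell^{1/\theta}$ in $X$ whenever $2^{\theta}$ lies in the set above (the case $\theta=0$ being the limiting statement about $c_0$). The building blocks are truncated power functions: one takes $n$ pairwise disjoint, equimeasurable copies $x_1,\dots,x_n$ of a function $f$ with $f^*(s)=s^{-\theta}\chi_{(b,c)}(s)$ on a suitably chosen window $(b,c)$, and verifies by a rearrangement computation that $\bigl\|\sum_k a_k x_k\bigr\|_X\approx\|a\|_{1/\theta}\,\|f\|_X$. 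One uses windows of three kinds: concentrated near $0$ (available for $\theta\in[\alpha_X^0,\beta_X^0]$), concentrated near $\infty$ ($\theta\in[\alpha_X^\infty,\beta_X^\infty]$), and straddling $0$ and $\infty$ ($\theta\in[\alpha_X^\infty,\beta_X^0]$, which is non-vacuous only when $\alpha_X^\infty\le\beta_X^0$). The technical inputs are (1) a Matuszewska--Orlicz/regular-variation lemma: for $X$ of fundamental type and every exponent in $[\alpha_X^0,\beta_X^0]$ (resp.\ $[\alpha_X^\infty,\beta_X^\infty]$) there is a sequence of scales tending to $0$ (resp.\ to $\infty$) along which $\phi_X$ is comparable to a power function of that exponent over arbitrarily long multiplicative ranges; and (2) the estimate of the ``peak'' of $f$ near $b$ and the ``tail'' near $c$ after the $n$ copies are concatenated. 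The peak is negligible provided $f$ is located where $\phi_X$ grows at least as fast as $s^{\theta}$ --- possible precisely when $\theta\le\beta_X^0$; the tail is negligible provided $f$ extends into a region where $\phi_X$ grows no faster than $s^{\theta}$ --- possible precisely when $\theta\ge\alpha_X^\infty$. Hence straddling windows realize exactly $\theta\in[\alpha_X^\infty,\beta_X^0]$, and the three families together cover $[\alpha_X^0,\beta_X^0]\cup[\alpha_X^\infty,\beta_X^\infty]\cup[\alpha_X^\infty,\beta_X^0]=[\alpha_X,\beta_X^0]\cup[\alpha_X^\infty,\beta_X]$.

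For ``$\subseteq$'', which has content only in case (ii), I must show that no $\lambda\in(2^{\beta_X^0},2^{\alpha_X^\infty})$ is an approximate eigenvalue. Assume $\|u_n\|_X=1$ and $\varepsilon_n:=\|\sigma u_n-\lambda u_n\|_X\to0$; since $\sigma$ is invertible, $\sigma^{-1}x(t)=x(2t)$, a telescoping identity gives $\|\sigma^{\pm m}u_n-\lambda^{\pm m}u_n\|_X\le C_m\varepsilon_n\to0$ for every fixed $m$. I would first show that the small-scale part of $u_n$ disappears. Using $\lambda>2^{\beta_X^0}$ together with the fundamental-type hypothesis --- which lets one bound the norm of $\sigma^{k}$ restricted to functions supported in $(0,2^{-N})$ by (a constant times) $M_{\phi_X}^0(2^{k}):=\limsup_{s\to0^+}\phi_X(2^{k}s)/\phi_X(s)$ for $N$ large --- and the fact that $\bigl(M_{\phi_X}^0(2^{k})\bigr)^{1/k}\to2^{\beta_X^0}<\lambda$, one obtains $k$ and $\vartheta<1$ with $\|\sigma^{k}a\|_X\le\vartheta\lambda^{k}\|a\|_X$ for all $a$ supported in $(0,2^{-N})$ and $N$ large; iterating this along $u_n\chi_{(0,2^{-N})}=\sigma^{mk}\bigl(\sigma^{-mk}(u_n\chi_{(0,2^{-N})})\bigr)$ --- whose intermediate supports only shrink, so that $N$ does not have to grow with $m$ --- and using $\|\sigma^{-mk}u_n\|_X\le\lambda^{-mk}+o_n(1)$ yields $\|u_n\chi_{(0,2^{-N})}\|_X\le\vartheta^{m}+o_n(1)$ for all $m$, hence $\to0$. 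The mirror-image argument, with $\sigma^{-1}$ in place of $\sigma$, $\lambda<2^{\alpha_X^\infty}$, and $\bigl(M_{\phi_X}^\infty(2^{-k})\bigr)^{1/k}\to2^{-\alpha_X^\infty}<\lambda^{-1}$, gives $\|u_n\chi_{(2^{N},\infty)}\|_X\to0$. Fixing such an $N$ we get $\|u_n\chi_{(2^{-N},2^{N})}\|_X\to1$; but $\sigma^{2N}$ carries $(2^{-N},2^{N})$ into $\{t>2^{N}\}$, so $\|\sigma^{2N}(u_n\chi_{(2^{-N},2^{N})})\|_X\le\lambda^{2N}\|u_n\chi_{(2^{N},\infty)}\|_X+C_{2N}\varepsilon_n\to0$, while at the same time $\|\sigma^{2N}(u_n\chi_{(2^{-N},2^{N})})\|_X\ge\|\sigma^{-1}\|_X^{-2N}\|u_n\chi_{(2^{-N},2^{N})}\|_X\to\|\sigma^{-1}\|_X^{-2N}>0$ --- a contradiction.

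The step I expect to be hardest is the constructive one: proving the regular-variation lemma that supplies long runs of power behavior of $\phi_X$ for each exponent of the local index intervals, and then carrying out the rearrangement estimate $\|\sum_k a_k x_k\|_X\approx\|a\|_{1/\theta}\|f\|_X$ uniformly in the admissible windows --- it is exactly the quantitative control of the peak and tail terms there that forces the constraint $\theta\in[\alpha_X^\infty,\beta_X^0]$ for straddling windows, and so is responsible for the gap $(2^{\beta_X^0},2^{\alpha_X^\infty})$ in case (ii). On the obstruction side the only care needed is to take $k$ large enough that the multiplicative comparison constant from the fundamental-type hypothesis is absorbed (so that $(\mathrm{const}\cdot M_{\phi_X}^0(2^{k}))^{1/k}$ still tends to $2^{\beta_X^0}$), and to organize the iteration so that the cut-off level $N$ is independent of the number of iterations.
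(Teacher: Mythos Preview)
Your approach is genuinely different from the paper's, which reduces everything to the associated sequence lattice $E_X$ (Proposition~\ref{Proposition 1} and Lemma~\ref{dilation1}) and then proves a purely combinatorial result (Theorem~\ref{L2-new}) about the shift operator there. Your ``$\subseteq$'' argument is essentially correct and pleasantly direct: the iteration $\|u_n\chi_{(0,1)}\|\le\|\sigma_{2^k}^0\|^m\,\|\sigma^{-mk}u_n\|$ together with $\|\sigma_{2^k}^0\|^{1/k}\to 2^{\beta_X^0}<\lambda$ really does force $\|u_n\chi_{(0,1)}\|\to 0$ (and symmetrically on $[1,\infty)$), giving the contradiction immediately. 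Note that this step uses only the \emph{definitions} of $\beta_X^0$ and $\alpha_X^\infty$; your detour through $M_{\phi_X}^0$ and the fundamental-type hypothesis is unnecessary here (and your $\limsup$ definition of $M_{\phi_X}^0$ does not match the paper's $\sup$).

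The ``$\supseteq$'' direction, however, has a real gap. Your regular-variation lemma --- that for every $\theta\in[\alpha_X^0,\beta_X^0]$ there exist scales where $\phi_X$ is comparable to a pure power $t^\theta$ over arbitrarily long multiplicative ranges --- is false in general. Take $s_j=\phi_X(2^{-j})$ built so that $s_{j+1}/s_j$ alternates between $2^{-1/3}$-blocks and $2^{-1/2}$-blocks of rapidly growing length; then $\alpha_X^0=1/3$, $\beta_X^0=1/2$, but on \emph{every} long window $\phi_X$ is piecewise $t^{1/3}$ and $t^{1/2}$, never $t^{0.4}$. What \emph{is} true (and is exactly what the paper exploits) is that for such $\theta$ the sequence $\nu_j=\lambda^{-j}s_j$ with $\lambda=2^\theta$ is unbounded in both directions, so one can find windows where $\nu$ first rises and then falls. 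Your truncated power function $s^{-\theta}\chi_{(b,c)}$ discretizes to the \emph{simple} geometric test vector $\sum_{j}\lambda^{-j}e_{k+j}$; for this vector $\|\tau_\lambda u\|/\|u\|$ is governed by $\max(\nu_k,\nu_{k+m})/\max_j\nu_{k+j}$, which need \emph{not} be small in the oscillatory example. The paper circumvents this by using the \emph{squared} vector $a=(I+\lambda^{-1}\tau+\cdots+\lambda^{-n}\tau^n)^2e_k$: then $\|\tau_\lambda^2 a\|$ has only three surviving terms while $\|a\|\ge n\lambda^{-n}s_{k+n}$, and the extra factor of $n$ is precisely what defeats bounded oscillation of $\nu$ (see the derivation of inequality~\eqref{equa20}). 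A second, smaller issue: you invoke Theorem~\ref{Theorem 1} to pass from symmetric finite representability to approximate eigenvalues, but that theorem is stated for \emph{separable} $X$, whereas Theorem~\ref{Theorem 4} carries no separability hypothesis.
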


From Theorems \ref{Theorem 4} and \ref{Theorem 1} we get the following description of the set of $p\in [1,\infty]$ such that  $\ell^p$ is symmetrically finitely represented in a given separable rearrangement invariant space on $(0,\infty)$ of fundamental type.

\begin{cor}\label{main1}
Let $X$ be a separable rearrangement invariant space on $(0,\infty)$ of fundamental type. Then, we have 

(i) if ${\alpha_X^\infty}\le {{\beta_X^0}}$, then ${\mathcal F}(X)={\mathcal F}_c(X)=[1/\beta_X,1/\alpha_X]$;

(ii) if ${\alpha_X^\infty}>{{\beta_X^0}}$, then ${\mathcal F}(X)={\mathcal F}_c(X)=[1/\beta_X,1/\alpha_X^\infty]\cup [1/\beta_X^0,1/\alpha_X]$.
\end{cor}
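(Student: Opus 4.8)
The plan is to deduce Corollary \ref{main1} from Theorem \ref{Theorem 4} and Theorem \ref{Theorem 1}, which together reduce everything to a purely arithmetic manipulation. By Theorem \ref{Theorem 1}, for a separable rearrangement invariant space $X$ on $(0,\infty)$ we have $p\in{\mathcal F}(X)\iff p\in{\mathcal F}_c(X)\iff 2^{1/p}$ is an approximate eigenvalue of the doubling operator $\sigma$. Hence ${\mathcal F}(X)={\mathcal F}_c(X)=\{p\in[1,\infty]:\ 2^{1/p}\in E(\sigma)\}$, where $E(\sigma)$ denotes the set of approximate eigenvalues of $\sigma$. So the proof amounts to pulling the description of $E(\sigma)$ given in Theorem \ref{Theorem 4} back through the decreasing bijection $p\mapsto 2^{1/p}$ from $[1,\infty]$ onto $[1,2]$.

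The key step is the following translation of intervals. First I would record that $p\mapsto 1/p$ is a decreasing bijection of $[1,\infty]$ onto $[0,1]$, and that $s\mapsto 2^s$ is an increasing bijection of $[0,1]$ onto $[1,2]$; consequently, for a subinterval of eigenvalues of the form $[2^a,2^b]$ with $0\le a\le b\le 1$, one has $2^{1/p}\in[2^a,2^b]\iff a\le 1/p\le b\iff 1/b\le p\le 1/a$, i.e. the corresponding set of exponents is exactly $[1/b,1/a]$. Applying this with $(a,b)=(\alpha_X,\beta_X)$ in case (i) gives ${\mathcal F}(X)={\mathcal F}_c(X)=[1/\beta_X,1/\alpha_X]$, where the endpoint $p=1/\alpha_X$ is interpreted as $\infty$ when $\alpha_X=0$ (and here one must invoke the fact, guaranteed by the hypotheses of fundamental type together with the standing convention $0<\alpha_X\le\beta_X<1$ for the Boyd indices — or the appropriate boundary reading — so that the interval is well defined). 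In case (ii), the set $E(\sigma)=[2^{\alpha_X},2^{\beta_X^0}]\cup[2^{\alpha_X^\infty},2^{\beta_X}]$ is a union of two such intervals with $\alpha_X\le\beta_X^0<\alpha_X^\infty\le\beta_X$; applying the translation to each piece separately yields $[1/\beta_X^0,1/\alpha_X]$ from the first and $[1/\beta_X,1/\alpha_X^\infty]$ from the second, so that ${\mathcal F}(X)={\mathcal F}_c(X)=[1/\beta_X,1/\alpha_X^\infty]\cup[1/\beta_X^0,1/\alpha_X]$, matching the statement.

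There is essentially no analytic obstacle here: the content has already been absorbed into Theorems \ref{Theorem 4} and \ref{Theorem 1}. The only point requiring a little care is bookkeeping at the endpoints — checking that $2^{1/p}$ runs over $[1,2]$ as $p$ runs over $[1,\infty]$ (with $2^{1/\infty}:=1$, corresponding to $c_0$), and that the inequalities $\alpha_X\le\beta_X^0<\alpha_X^\infty\le\beta_X$ hold in case (ii) so that the two exponent-intervals are genuinely disjoint and are listed in the order written, as well as verifying in case (i) that $\alpha_X^\infty\le\beta_X^0$ forces the single interval $[1/\beta_X,1/\alpha_X]$ rather than anything smaller. Once these are checked, the corollary follows by direct substitution, and I would present it in two short paragraphs, one for each case, after a one-line statement of the monotone change of variables $p\mapsto 2^{1/p}$.
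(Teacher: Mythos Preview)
Your proposal is correct and follows exactly the paper's approach: the corollary is stated there as an immediate consequence of Theorems~\ref{Theorem 4} and~\ref{Theorem 1}, and you have simply spelled out the change of variables $p\mapsto 2^{1/p}$ that the paper leaves implicit. The only quibble is that the paper allows $0\le\alpha_X\le\beta_X\le 1$ rather than strict inequalities, so your parenthetical about the ``appropriate boundary reading'' (with $1/0=\infty$) is the right way to handle the endpoints, and there is no need to invoke any extra hypothesis.
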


Our approach to the problem of finding the set of approximate eigenvalues of the doubling operator $\sigma$ in a rearrangement invariant space $X$ is based on its reduction to the similar task for the shift operator $\tau (a_k)=(a_{k-1})$ in some Banach sequence lattice $E_X$ modelled on $\hj$ such that the sequence of characteristic functions of dyadic intervals $\{\chi_{\Delta_k}\}_{k\in\hj}$, where $\Delta_k=[2^k,2^{k+1})$, is equivalent in $X$ to the unit vector basis in $E_X$ (see Proposition \ref{Proposition 1}). 
This is a development of an idea applied by Kalton \cite{Kal} in the special case of Lorentz spaces and corresponding weighted $\ell^p$-spaces when studying completely different problems related to interpolation theory of operators, in particular, to a characterization of Calder\'{o}n couples of r.i. spaces (see also \cite[Lemma~2.2]{IK-01}).  

A description of the set of all positive $\lambda$, for which the operator $\tau_\lambda=\tau-\lambda I$ is an isomorphism in a Banach sequence lattice $E$ satisfying certain properties (and thereby of the set of approximate eigenvalues of $\tau$), is given in Theorem \ref{L2-new}. In Appendix (see Theorem \ref{L2-new-close}), we put a characterization of the set of positive $\lambda$ such that the operator $\tau_\lambda$ is closed in such a  lattice $E$, which complements Theorem \ref{L2-new}.

Comparing Theorems  \ref{Theorem 4} and \ref{Th:Rosenthal} and  their proofs indicates that, when one considers the problem of symmetric finite representability of $\ell^p$-spaces in rearrangement invariant spaces, then the sequence of characteristic functions of dyadic intervals $\Delta_n^k=[2^{-n}(k-1), 2^{-n}k)$, $n=0,1,\dots$, $k=1,2,\dots$, has properties similar to those of a subsymmetric unconditional basis in a space with such a basis. In particular, our approach allows to obtain for the sequence $\{\chi_{\Delta_{n}^k}\}_{n,k}$ and {\it general} separable rearrangement invariant spaces a result analogous to Theorem \ref{Th:Rosenthal} (see Theorem \ref{Theorem 5f}). 

In the concluding part of the paper, we identify the set ${\mathcal F}(X)$ for arbitrary Lorentz and separable Orlicz spaces (Theorems \ref{Theorem 4a} and \ref{Theorem 4b}).  

\vskip0.5cm

\section{Preliminaries.}
\subsection{Banach function and sequence lattices.}
\label{prel1}
Here, we recall some definitions and results that relate to Banach function lattices; for a detailed exposition see, for example, the monographs \cite{KA,BSh,LT1,LT2}.

Let $(\Omega,\Sigma,\mu)$ be a $\sigma$-finite measure space and let $S(\Omega,\Sigma,\mu)$ be the linear topological space of all a.e. finite real-valued functions (of equivalence classes) defined on $\Omega$ with natural algebraic operations and the topology of convergence in measure $\mu$ on sets of finite measure.

A Banach space $E \subset S(\Omega,\Sigma,\mu)$ is said to be a Banach {\it function lattice} (or {\it ideal space}) if from $x\in E$, $y \in S(\Omega,\Sigma,\mu)$ and $|y| \leq |x|$ a.e. it follows that $y\in E$ and $\|y\|_E \leq \|x\|_E$. In the case when $\Omega=\mathbb{Z}$ and 
$\mu$ is the counting measure, we will say that $E$ is a Banach sequence lattice.

Every Banach function lattice $E$ over a $\sigma$-finite measure space $(\Omega,\Sigma,\mu)$ is linearly and continuously embedded into $S(\Omega,\Sigma,\mu)$ (see e.g. \cite[Theorem~4.3.1]{KA}). This means that convergence in the $E$-norm implies convergence in measure on all subsets of $\Omega$ of finite measure.

If $E$ is a Banach function lattice, then the {\it K\"{o}the dual} 
(or {\it associated}) function lattice $E'$ consists of all $y\in S(\Omega,\Sigma,\mu)$ such that
$$
\|y\|_{E'}:=\sup\,\Bigl\{\int_{\Omega}{x(t)y(t)\,d{\mu}}:\;\;
\|x\|_{E}\,\leq{1}\Bigr\}<\infty
$$
(in the case when $E$ is a Banach sequence lattice the integral should be replaced with the sum over $\mathbb{Z}$).

One can easily check that $E'$ is complete with respect to the norm $y\mapsto \|y\|_{E'}$ and $E$ is continuously embedded into its second 
K\"{o}the dual $E''$, with $\|x\|_{E''}\le\|x\|_E$ for $x\in E$. 
A Banach function lattice $E$ {\it has the Fatou property} (or {\it maximal}) if from $x_n\in E,$ $n=1,2,\dots,$ $\sup_{n=1,2,\dots}\|x_n\|_E<\infty$, $x\in S(\Omega,\Sigma,\mu)$ and $x_n\to{x}$ a.e. on $\Omega$ it follows that $x\in E$ and $||x||_E\le \liminf_{n\to\infty}{||x_n||_E}.$ Observe that a Banach function lattice $E$ has the Fatou property if and only if the natural inclusion of $E$ into $E''$ is a surjective isometry \cite[Theorem~6.1.7]{KA}. 

A Banach function lattice $E$ is said to have an {\it order continuous norm} if for every $x\in E$ and any decreasing sequence of sets $A_{n} \in \Sigma$ with $\mu(\bigcap_{n=1}^{\infty} A_n) = 0$ it follows $\|x \chi_{A_{n}} \|_E \rightarrow 0$ as $n \rightarrow \infty$. 

Any K\"{o}the dual function lattice $E'$ is embedded isometrically into (Banach) dual space $E^*$ and $E'=E^*$ if and only if $E$ has an order continuous norm \cite[Corollary~6.1.2]{KA}.

\smallskip

\subsection{Rearrangement invariant spaces.}
\label{prel2}

An important class of Banach function lattices is formed by the so-called rearrangement invariant spaces. We will consider these spaces in the case when underlying measure space is $(0,\infty)$ with the usual Lebesgue measure $m$.

A Banach function lattice $X$ on $(0,\infty)$ is said to be
{\it rearrangement invariant} (in brief, r.i.) (or {\it symmetric}) if, whenever $y\in X$, $x\in S((0,\infty),m)$ and $x^*(t)\le y^*(t)$, $t>0$, we have $x\in X$ and 
$\|x\|_{X} \le \|y\|_{X}$. Here and below, $x^*(t)$ is the right continuous nonincreasing {\it rearrangement} of $|x(s)|$, i.e., 
$$x^{*}(t):=\inf \{ \tau\ge 0:\,m\{s>0:\,|x(s)|>\tau\}\le t \},\;\;t>0.$$
Observe that $x$ and $x^*$ are equimeasurable, that is,
$$
m\{s>0:\,|x(s)|>\tau\}=m\{s>0:\,x^*(s)>\tau\}\;\;\mbox{for all}\;\tau>0.$$ 

Following \cite{LT2}, throughout the paper we will assume that every r.i. space is either separable or has the Fatou property.

For every r.i. space $X$ the following continuous embeddings hold:
$$
L_1\cap L_\infty\subseteq X \subseteq L_1+L_\infty.
$$ 
We denote by $X_0$ the closure of $L_1\cap L_\infty$ in $X$; this set is called the
{\it separable part} of $X$. If $X\ne L_1\cap L_\infty$, then $X_0$ is 
separable. A r.i. space $X$ is separable if and only if $X$ has an order continuous norm (see e.g. \cite[Theorem~4.3.3]{KA}).

Let $X$ be a r.i. space. The function $\varphi _X(t):=\Vert \chi _A \Vert_X$, where $A\subset(0,\infty)$, $m(A)=t$, and $\chi_A$ is the characteristic function of
$A$, is called the {\it fundamental function} of $X$. 

Important examples of r.i. spaces are  the $L^p$-spaces, $1\le
p\le\infty$, and their generalization, the Orlicz spaces (see \cite{KR}, \cite{RR}, \cite{M-89}). Let ${N}$ be an Orlicz function on $[0,\infty)$, i.e., $N$ is a convex continuous increasing function on $[0,\infty)$ with ${N}(0)=0$ and ${N}(\infty)=\infty$. The {\it Orlicz space} $L_N$ consists of all measurable functions $x(t)$ on $(0,\infty)$ for which the Luxemburg norm 
$$
\|x\|_{L_N}:=\inf\Big\{u
>0\,:\,\int_0^\infty N(|x(t)|/u) \,dt\leq 1\Big\}$$ 
is finite.
In particular, if $N(s)=s^p$, $1\le p<\infty$, we obtain the space $L^p=L^p(0,\infty)$ with the usual norm. 

Every Orlicz space $L_N$ has the Fatou property and it is separable if and only if the function $N$ satisfies the {\it $\Delta_2$-condition}, i.e., 
$$
\sup_{u>0}\frac{N(2u)}{N(u)}<\infty.$$

The fundamental function of $L_N$ can be calculated by the formula $\phi_{L_N}(t)=1/N^{-1}(1/t)$, $t>0$, where $N^{-1}$ is the inverse function for $N$.

Another important class of r.i. spaces is formed by the  Lorentz 
spaces. Let $1\le q<\infty$, and let $\psi$ be an increasing concave function on $[0,\infty)$ such that $\psi(0)=0$.
The {\it Lorentz space} $\Lambda_q(\psi)$ consists of all functions $x(t)$ measurable on $(0,\infty)$ and satisfying the condition:
\begin{equation}
\label{eqLor} 
\|x\|_{\Lambda_q(\psi)}:=\left(\int_0^\infty [x^{*}(t)]^{q} 
d\psi(t) \right)^{1/q}<\infty
\end{equation} 
(see \cite{Lo-51}, \cite{KPS}, \cite[p. 121]{LT2})\footnote{There is also a different variant of the Lorentz spaces $\Lambda_{q,\psi}$ endowed with the norm $\|x\|_{\Lambda_{q,\psi}}:=(\int_0^\infty x^*(t)^q\psi(t)^q\,\frac{dt}{t})^{1/q}$ (see e.g. \cite{Sh-72} and  \cite{Ra-92}).}.

For every $1\le q<\infty$ and any concave increasing function $\psi$, $\Lambda_q(\psi)$ is a separable r.i. space with the Fatou property and $\phi_{\Lambda_q(\psi)}(t)=\psi(t)^{1/q}$.

For more detailed information related to r.i. spaces we refer to the books \cite{LT2}, \cite{KPS} and \cite{BSh}.

\smallskip

\subsection{Indices of Banach lattices, r.i. spaces and functions.}
\label{prel3}

Let $E$ be a Banach sequence lattice modelled on $\mathbb{Z}$ such that the shift operator $\tau_na:=(a_{k-n})_{k\in\hj}$, where $a=(a_{k})_{k\in\hj}$, is bounded in $E$ for every $n\in\mathbb{Z}$. If $A\subset\Omega$, where $\Omega$ is $\hj$ or $\gh$, we  denote by $P_A$ the natural projection defined by $P_Af(\omega)=f(\omega)\chi_A(\omega)$, where $f:\,\Omega\to\gh$. Then, denoting $\hj_+=\{k\in\hj:\,k\ge 0\}$ and $\hj_-=\{k\in\hj:\,k\le 0\}$, for each $n\in\hj$ we set:
$$
\|\tau_{n}^0\|_{E\to E}:=\sup_{a\in E:\,a=P_{\hj_-}a}\frac{\|P_{\hj_-}(\tau_{n}a)\|_E}{\|a\|_E}\;\;\mbox{and}\;\;\|\tau_{n}^\infty\|_{E\to E}:=\sup_{a\in E:\,a=P_{\hj_+}a}\frac{\|P_{\hj_+}(\tau_{n}a)\|_E}{\|a\|_E}.
$$
Since these norms are subadditive in $n$, we can define the shift exponents of $E$ by
\begin{align*}
\gamma_E: &= -\lim_{n\to\infty}\frac1n\log_2\|\tau_{-n}\|_{E\to E}, 
&\delta_E: &=\lim_{n\to\infty}\frac{1}{n}\log_2 \|\tau_{n}\|_{E\to E},
\nonumber\\
\gamma_E^0: &= -\lim_{n\to\infty}\frac{1}{n}\log_2 \|\tau_{-n}^0\|_{E\to E}, 
&\delta_E^0&=\lim_{n\to\infty}\frac{1}{n}\log_2 \|\tau_{n}^0\|_{E\to E},
\nonumber\\ \qquad
\gamma_E^\infty&= -\lim_{n\to\infty}\frac{1}{n}\log_2 \|\tau_{-n}^\infty\|_{E\to E}, 
&\delta_E^\infty &=\lim_{n\to\infty}\frac{1}{n}\log_2 \|\tau_{n}^\infty\|_{E\to E}.
%\label{equa16b}
\end{align*}

For any $\tau>0$, the dilation operator $\sigma_\tau x(t):=x(t/\tau)$ is bounded in any r.i. space $X$ and $\|\sigma_\tau\|_{X\to X}\le \max(1,\tau)$, $\tau>0$; see e.g. \cite{Boyd} or \cite[Theorem 2.4.4]{KPS}. The numbers
$$
\alpha_X = -\lim_{n\to\infty}\frac{1}{n}\log_2
\|\sigma_{2^{-n}}\|_{X\to X}\quad\mbox{and}\quad  
\beta_X=\lim_{n\to\infty}\frac{1}{n}\log_2 \|\sigma_{2^n}\|_{X\to X},
$$
are called the {\it Boyd indices} of $X$. Similar indices may be introduced also when the norm $\|\sigma_\tau\|_{X\to X}$ is restricted to each of the intervals $[0,1]$ and $[1,\infty)$. Given r.i. space $X$ on $(0,\infty)$, for every $n\in\hj$ we denote
\begin{equation*}
\label{equa16a} 
\|\sigma_{2^n}^0\|_{X\to X}=\sup_{x\in X:\,x=P_{[0,1]}x}\frac{\|P_{[0,1]}(\sigma_{2^n} x)\|_X}{\|x\|_X}
\end{equation*}
and
\begin{equation*}
\label{equa16aa*} 
\|\sigma_{2^n}^\infty\|_{X\to X}=\sup_{x\in X:\,x=P_{[1,\infty)}x}\frac{\|P_{[1,\infty)}(\sigma_{2^n} x)\|_X}{\|x\|_X}.
\end{equation*}
Then, there are the following partial dilation indices:
\begin{align*}
\alpha_X^0&= -\lim_{n\to\infty}\frac{1}{n}\log_2 \|\sigma_{2^{-n}}^0\|_{X\to X}, 
&\beta_X^0&=\lim_{n\to\infty}\frac{1}{n}\log_2 \|\sigma_{2^n}^0\|_{X\to X},
\nonumber\\ \qquad
\alpha_X^\infty&= -\lim_{n\to\infty}\frac{1}{n}\log_2 \|\sigma_{2^{-n}}^\infty\|_{X\to X}, 
&\beta_X^\infty &=\lim_{n\to\infty}\frac{1}{n}\log_2 \|\sigma_{2^n}^\infty\|_{X\to X}.
%\label{equa16b}
\end{align*}
We always have $0\le \alpha_X\le\alpha_X^0\le \beta_X^0\le\beta_X\le 1$ and $0\le \alpha_X\le\alpha_X^\infty\le \beta_X^\infty\le\beta_X\le 1$
(see, for instance, \cite[\S\,II.4]{KPS}).

Let $\psi$ be a positive function on $(0,\infty)$. We introduce the  {\it dilation functions}:
$$
M_\psi(t):=\sup_{s>0}\frac{\psi(ts)}{\psi(s)},\quad
M_\psi^0(t):=\sup_{0<s\le\min(1,1/t)}\frac{\psi(ts)}{\psi(s)},
\quad
M_\psi^{\infty}(t):=\sup_{s\ge\max(1,1/t)}\frac{\psi(ts)}{\psi(s)},
$$
and the {\it dilation indices} of  $\psi$ by
\begin{align*}
\mu_\psi&= -\lim_{n\to\infty}\frac{1}{n}\log_2 M_\psi(2^{-n}), 
&\nu_\psi&= \lim_{n\to\infty}\frac{1}{n}\log_2 M_\psi(2^{n}), 
\nonumber\\ \qquad
\mu_\psi^0&= -\lim_{n\to\infty}\frac{1}{n}\log_2 M_\psi^0(2^{-n}), 
&\nu_\psi^0&= \lim_{n\to\infty}\frac{1}{n}\log_2 M_\psi^0(2^{n}), 
\nonumber\\ \qquad
\mu_\psi^\infty&= -\lim_{n\to\infty}\frac{1}{n}\log_2 M_\psi^\infty(2^{-n}), 
&\nu_\psi^\infty&= \lim_{n\to\infty}\frac{1}{n}\log_2 M_\psi^\infty(2^{n}). 
%\label{equa16b}
\end{align*}
If $\psi$ is quasi-concave (i.e., $\psi$ is nondecreasing and $\psi(t)/t$ is nonincreasing), then $0\leq\mu_\psi\leq\mu_\psi^0\le \nu_\psi^0\le \nu_\psi\le 1$ and $0\leq\mu_\psi\leq\mu_\psi^\infty\le \nu_\psi^\infty\le \nu_\psi\le 1$ (see \cite[\S2.1.2]{KPS}).

In particular, the fundamental function $\phi_X$ of a r.i. space $X$ is quasi-concave. One can easily check that from the above definitions it follows that  $\alpha_X\le \mu_{\phi_X}$, $\alpha_X^0\le \mu_{\phi_X}^0$, $\alpha_X^\infty\le \mu_{\phi_X}^\infty$, $ \nu_{\phi_X}\le\beta_X$, $\nu_{\phi_X}^0\le\beta_X^0$, and $ \nu_{\phi_X}^\infty\le\beta_X^\infty$.

We will say that a r.i. space $X$ is of {\it fundamental type} whenever the corresponding Boyd indices of $X$ and the dilation indices of $\phi_X$ are equal, i.e.,
\begin{equation}
\label{equa main assump} 
\alpha_X=\mu_{\phi_X}\;,\;\alpha_X^0=\mu_{\phi_X}^0\;,\;\alpha_X^\infty=\mu_{\phi_X}^\infty\;,\;\beta_X=\nu_{\phi_X}\;,\;\beta_X^0=\nu_{\phi_X}^0\;,\;\beta_X^\infty=\nu_{\phi_X}^\infty.
\end{equation}

The most known and important r.i. spaces, in particular, all Lorentz and Orlicz spaces, are of fundamental type\footnote{The first example of a r.i. space of non-fundamental type was constructed by Shimogaki in \cite{Shimo}.}. 

If $X$ is a Lorentz space $\Lambda_q(\psi)$, then $\phi_{X}=\psi^{1/q}$ and hence  
\begin{equation}
\label{Ind of Lor} 
\alpha_{X}=\mu_{\psi^{1/q}}\;,\;\alpha_X^0=\mu_{\psi^{1/q}}^0\;,\;\alpha_X^\infty=\mu_{\psi^{1/q}}^\infty\;,\;\beta_X=\nu_{\psi^{1/q}}\;, \;\beta_X^0=\nu_{\psi^{1/q}}^0\;,\;\beta_X^\infty=\nu_{\psi^{1/q}}^\infty.
\end{equation}

Similarly, substituting the dilation indices of the fundamental function $1/N^{-1}(1/t)$ of an Orlicz space $L_N$ in formulae \eqref{equa main assump}, we can find the dilation indices of $L_N$\footnote{Alternatively, for Orlicz spaces there are used often also the so-called Matuszewska-Orlicz indices $p_N$ and $q_N$ of an Orlicz function $N$ (see e.g. \cite[Proposition~2.b.5]{LT2} or \cite{Ma-85}), which are the reciprocals of the corresponding Boyd indices (i.e., $p_N=1/\beta_{L_N}$ and $q_N=1/\alpha_{L_N}$).}.

Throughout, we denote by $e_n$, $n\in\mathbb{Z}$, the standard unit vectors and by $c_{0,0}$ the set of all finitely supported sequences, i.e., $x=(x_n)_{n\in\mathbb{Z}}\in c_{0,0}$ if ${\rm card}\,\{n:\,x_n\ne 0\}<\infty$.
We write $f\asymp g$ if $cf\leq g\leq Cf$ for some constants $c>0$ and $C>0$ that do not depend on the values of all (or some)  arguments of the functions (quasi-norms) $f$ and $g$. From time to time the value of the constant $C$ may change.

\vskip0.5cm

\section{On a class of Banach sequence lattices generated by r.i.
function spaces.}
\label{dyadic block basis}

Here, we assign to every r.i. function space $X$ a certain Banach sequence lattice $E_X$ such that the sequence $\{\chi_{\Delta_k}\}_{k\in\hj}$, where $\Delta_k:=[2^k,2^{k+1})$, is equivalent in $X$ to the unit vector basis in $E_X$. 

Let $X$ be a r.i. space on $(0,\infty)$. For an arbitrary sequence
$a=(a_k)_{k\in\hj}$ we introduce the following step function
$$
Sa(t):=\sum_{k\in\hj} a_k\chi_{\Delta_k}(t),\;\;t>0.
$$ 
We associate to $X$ the Banach sequence lattice $E_X$ equipped with the norm
$$
\Big\|\sum_{k\in\hj} a_ke_k\Big\|_{E_X}:=\|Sa\|_X.$$
Observe that for all $x\in X$ we have
\begin{equation}
\label{second view} 
\frac12\|x\|_{X}\le \Big\|\sum_{k\in\hj} x^*(2^k)e_k\Big\|_{E_X}\le \|x\|_{X}.
\end{equation}
Indeed, assuming (as we can) that $x=x^*$, by the definition of $E_X$, we come immediately to the right-hand side inequality. Moreover, since $\|\sigma_2\|_{X\to X}\le 2$ \cite[Theorem~2.4.5]{KPS}, we have
$$
\frac12\|x\|_{X}\le \|\sigma_{1/2}x\|_{X}\le\Big\|\sum_{k\in\hj} x(2^{k+1})\chi_{\Delta_{k}}\Big\|_X\le \Big\|\sum_{k\in\hj} x(2^k)e_k\Big\|_{E_X},$$
and the left-hand side inequality in \eqref{second view} follows as well.

The first simple result of this section establishes direct connections between the norms of the dilation operators in $X$ and the shift operators in $E_X$.

\begin{lemma}
\label{dilation}
For every r.i. space $X$ on $(0,\infty)$ we have: 
\begin{equation}\label{equa: 1001}
\|\tau_n\|_{E_X}\le \|\sigma_{2^n}\|_X\;,\;\|\tau_{n}^0\|_{E_X}\le\|\sigma_{2^n}^0\|_X\;,\;\|\tau_{n}^\infty\|_{E_X}\le \|\sigma_{2^n}^\infty\|_X,\;\;\;n\in\mathbb{Z},
\end{equation}
and
\begin{equation}\label{equa: 1001a}
\|\sigma_{2^n}\|_X\le\|\tau_{n+1}\|_{E_X}\;,\;\|\sigma_{2^n}^0\|_X\le\|\tau_{n+1}^0\|_{E_X}\;,\;\|\sigma_{2^n}^\infty\|_X\le\|\tau_{n+1}^\infty\|_{E_X},\;\;\;n\in\mathbb{Z}.
\end{equation}

Hence, $\alpha_X=\gamma_{E_X}$, $\alpha_X^0=\gamma_{E_X}^0$, $\alpha_X^\infty=\gamma_{E_X}^\infty$, $\beta_X=\delta_{E_X}$, $\beta_X^0=\delta_{E_X}^0$ and $\beta_X^\infty=\delta_{E_X}^\infty$.
\end{lemma}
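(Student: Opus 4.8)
The two displayed inequality chains \eqref{equa: 1001} and \eqref{equa: 1001a} do all the work: once we have them, the claimed equalities of indices follow by taking $\frac1n\log_2$ of both sides, letting $n\to\infty$, and using subadditivity (which guarantees the defining limits exist). So the plan is to prove the six operator-norm inequalities in \eqref{equa: 1001} and the six in \eqref{equa: 1001a}, and then read off the indices. The key translation device is the step function $Sa(t)=\sum_{k\in\hj}a_k\chi_{\Delta_k}(t)$ together with the identity $\|\sum a_k e_k\|_{E_X}=\|Sa\|_X$; I will repeatedly use the elementary fact that $\sigma_{2^n}$ maps a step function that is constant on dyadic blocks $\Delta_k$ to another such step function with the blocks relabelled by the shift, i.e. $\sigma_{2^n}(Sa)=S(\tau_n a)$ exactly. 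This is because $\sigma_{2^n}\chi_{\Delta_k}(t)=\chi_{\Delta_k}(t/2^n)=\chi_{[2^{k+n},2^{k+n+1})}(t)=\chi_{\Delta_{k+n}}(t)$.

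First I would prove \eqref{equa: 1001}. Given $a\in E_X$, the identity $\sigma_{2^n}(Sa)=S(\tau_n a)$ gives
\[
\|\tau_n a\|_{E_X}=\|S(\tau_n a)\|_X=\|\sigma_{2^n}(Sa)\|_X\le\|\sigma_{2^n}\|_{X\to X}\,\|Sa\|_X=\|\sigma_{2^n}\|_{X\to X}\,\|a\|_{E_X},
\]
which is the first inequality. For the partial versions, note that if $a=P_{\hj_-}a$ then $Sa$ is supported in $\bigcup_{k\le 0}\Delta_k=(0,1)$, hence $Sa=P_{(0,1)}(Sa)$; similarly $P_{\hj_-}(\tau_n a)$ corresponds under $S$ to $P_{(0,1)}(S(\tau_n a))=P_{(0,1)}(\sigma_{2^n}(Sa))$. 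So the definition of $\|\sigma_{2^n}^0\|_{X\to X}$ as a supremum over $x=P_{[0,1]}x$ applies and yields $\|\tau_n^0\|_{E_X}\le\|\sigma_{2^n}^0\|_X$; the $\infty$-case is identical with $(0,1)$ replaced by $[1,\infty)$ and $\hj_-$ by $\hj_+$. (One should check the slight mismatch between $[0,1]$ in the definition of $\|\sigma_{2^n}^0\|_X$ and $(0,1)=\bigcup_{k\le0}\Delta_k$ — irrelevant since $\{1\}$ is null and $\chi_{\Delta_0}$ lives on $[1,2)$, so the partition truly splits $(0,\infty)$ into $(0,1)$ and $[1,\infty)$ at the point $1$ matching the dyadic blocks $\{\Delta_k\}_{k<0}$ and $\{\Delta_k\}_{k\ge0}$.)

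Next I would prove \eqref{equa: 1001a}, which is the reverse direction and the one requiring a genuine estimate rather than a tautology. Here the idea is the one already used to prove \eqref{second view}: any $x\in X$ with $x=x^*$ is sandwiched, after dilation, between step functions on dyadic blocks. Concretely, for $x=x^*\in X$ we have $\sigma_{2^{-1}}x(t)=x(2t)\le\sum_k x(2^{k+1})\chi_{\Delta_k}(t)\le\sum_k x^*(2^k)\chi_{\Delta_{k}}\circ(\text{shift})$, and combining with $\|\sigma_2\|_{X\to X}\le2$ one gets $\|x\|_X\asymp\|\sum_k x^*(2^k)e_k\|_{E_X}$ as in \eqref{second view}. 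Applying $\sigma_{2^n}$ to such an $x$ and majorizing $\sigma_{2^n}x$ pointwise by a step function constant on the $\Delta_k$'s — at the cost of one extra shift, which is exactly the $n+1$ appearing in \eqref{equa: 1001a} — reduces $\|\sigma_{2^n}x\|_X$ to $\|\tau_{n+1}(\text{something})\|_{E_X}$ controlled by $\|\tau_{n+1}\|_{E_X}\|x\|_X$. The same monotone majorization respects the supports $(0,1)$ and $[1,\infty)$, giving the partial inequalities. I expect this reverse direction to be the main obstacle: one must set up the pointwise majorization of $\sigma_{2^n}x$ by an appropriate block-constant step function carefully (rounding $x^*$ up on each dyadic interval, keeping track of which block absorbs the loss, and hence why the shift index bumps from $n$ to $n+1$), whereas the forward direction \eqref{equa: 1001} is essentially automatic from $\sigma_{2^n}S=S\tau_n$.

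Finally, combining \eqref{equa: 1001} and \eqref{equa: 1001a} gives, for every $n$, $\|\tau_n\|_{E_X}\le\|\sigma_{2^n}\|_X\le\|\tau_{n+1}\|_{E_X}$, and likewise for the $0$- and $\infty$-variants. Taking $\log_2$, dividing by $n$, and passing to the limit as $n\to+\infty$ (for the $\beta$/$\delta$ indices) or replacing $n$ by $-n$ and letting $n\to+\infty$ (for the $\alpha$/$\gamma$ indices), the $+1$ shift washes out in the limit since $\frac1n\log_2\|\tau_{n+1}\|_{E_X}$ and $\frac1{n+1}\log_2\|\tau_{n+1}\|_{E_X}$ have the same limit. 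This yields $\alpha_X=\gamma_{E_X}$, $\beta_X=\delta_{E_X}$, and the four partial identities $\alpha_X^0=\gamma_{E_X}^0$, $\alpha_X^\infty=\gamma_{E_X}^\infty$, $\beta_X^0=\delta_{E_X}^0$, $\beta_X^\infty=\delta_{E_X}^\infty$, completing the proof.
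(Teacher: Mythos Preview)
Your plan is correct and mirrors the paper's argument for \eqref{equa: 1001} (via the identity $\sigma_{2^n}(Sa)=S(\tau_n a)$) and for the passage to indices. For the reverse inequality \eqref{equa: 1001a} the paper takes a slightly different route: instead of majorizing $\sigma_{2^n}x$ by a step function built from point evaluations $x^*(2^k)$, it introduces the averaging projection
\[
Qx(t)=\sum_{k\in\hj}2^{-k}\Big(\int_{\Delta_k}x(s)\,ds\Big)\chi_{\Delta_k}(t),
\]
notes that $\|Q\|_{X\to X}=1$, that $Qx=Sa_x$ with $a_x=(2^{-k}\int_{\Delta_k}x)_k$, and that for $x=x^*$ one has the pointwise bound $x\le Q\sigma_2 x$ together with $a_{\sigma_2 x}=\tau_1 a_x$. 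Combining these with $\sigma_{2^n}S=S\tau_n$ gives $\|\sigma_{2^n}x\|_X\le\|\tau_{n+1}a_x\|_{E_X}\le\|\tau_{n+1}\|_{E_X}\|Qx\|_X\le\|\tau_{n+1}\|_{E_X}\|x\|_X$. Your pointwise approach works just as well---the sequence $(x^*(2^{k+1}))_k$ (which satisfies $\sum_k x^*(2^{k+1})\chi_{\Delta_k}\le x^*$ pointwise) plays the role of $a_x$, and the extra shift enters for the same reason---but the averaging operator packages the ``$n\mapsto n+1$'' step a bit more cleanly via the identity $a_{\sigma_2 x}=\tau_1 a_x$.
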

\begin{proof}
Since all inequalities in \eqref{equa: 1001} and \eqref{equa: 1001a} are proved in the same way, we check only the first ones.

For every $n\in\mathbb{Z}$ and $a=(a_{k})_{k\in\hj}$ we have
\begin{equation}
\label{equa102}
S(\tau_na)= \sum_{k\in\hj} a_{k-n}\chi_{\Delta_k}=\sum_{k\in\hj} a_{k}\chi_{\Delta_{n+k}}=\sigma_{2^n}(Sa),
\end{equation}
which yields the first inequality in \eqref{equa: 1001}.

Next, we define the  averaging projection $Q$ by
\begin{equation}\label{projection Q}
Qx(t):=\sum_{k\in\hj} 2^{-k}\int_{\Delta_k}x(s)\,ds\cdot \chi_{\Delta_k}(t),\;\;t>0.
\end{equation}
It is well known that $Q$ is a one norm projection on each r.i. space $X$; see e.g. \cite[\S\,II.3.2]{KPS} (recall that $X$ is assumed to be separable or to have the Fatou property; see Subsection \ref{prel2}). 

Setting $a_x:=(2^{-k}\int_{\Delta_k}x(s)\,ds)_{k\in\hj}$ and comparing the operators $S$ and $Q$, we see that $Sa_x=Qx$. If $x=x^*$, then $x(t)\le Q\sigma_2x(t)$, $t>0$. 
Therefore, since 
$$
a_{\sigma_2x}=\sum_{k\in\hj} 2^{-k}\int_{\Delta_k}x(s/2)\,ds\cdot e_k=\sum_{k\in\hj} 2^{-k}\int_{\Delta_k}x(s)\,ds\cdot e_{k+1}=\tau_1a_x,$$ from \eqref{equa102} it follows that
\begin{eqnarray*}
\|\sigma_{2^n}x\|_X &\le& \|\sigma_{2^n}(Q\sigma_2x)\|_X=\|\sigma_{2^n}(S a_{\sigma_2x})\|_X=\|S(\tau_na_{\sigma_2x})\|_X\\ &=&\|\tau_{n+1}a_x\|_{E_X}\le \|\tau_{n+1}\|_{E_X}\|Qx\|_{X}\le \|\tau_{n+1}\|_{E_X}\|x\|_{X}.
\end{eqnarray*}

To finish the proof, it remains to observe that all the required equalities for the dilation and shift indices follow immediately from inequalities  \eqref{equa: 1001} and  \eqref{equa: 1001a}.
\end{proof}

Calculation of the dilation indices of a r.i. space $X$ is essentially simplified if $X$ is of fundamental type.

\begin{lemma}
\label{dilation1}
Let $X$ be a r.i. space of fundamental type. Then, 
\begin{align*}
\alpha_X &= -\lim_{n\to\infty}\frac{1}{n}\log_2
\sup_{k\in\mathbb{Z}}\frac{s _k}{s _{n+k}}, 
&\beta_X &=\lim_{n\to\infty}\frac{1}{n}\log_2 \sup_{k\in\mathbb{Z}}
\frac{s_{k+n}}{s_{k}},
\nonumber\\
\alpha_X^0&= -\lim_{n\to\infty}\frac{1}{n}\log_2 \sup_{k\le 0}\frac{s_{k-n}}{s_{k}}, 
&\beta_X^0&=\lim_{n\to\infty}\frac{1}{n}\log_2 \sup_{k\le 0}\frac{s_k}{s_{k-n}},
\nonumber\\ \qquad
\alpha_X^\infty&= -\lim_{n\to\infty}\frac{1}{n}\log_2 \sup_{k\ge 0}\frac{s_k}{s_{n+k}}, 
&\beta_X^\infty &=\lim_{n\to\infty}\frac{1}{n}\log_2 \sup_{k\ge 0}\frac{s_{k+n}}{s_k},
%\label{equa16b}
\end{align*}
where $s_j:=\|e_j\|_{E_X}$, $j\in\hj$.
\end{lemma}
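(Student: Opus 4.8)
The plan is to route everything through the identity $s_j=\phi_X(2^j)$ together with a two-sided comparison between the dilation functions of $\phi_X$ and suprema, over dyadic points, of ratios of the $s_j$'s, and then to invoke the fundamental-type hypothesis \eqref{equa main assump}. First I would record that, by the definitions of $E_X$ and of the operator $S$,
\[
s_j=\|e_j\|_{E_X}=\|Se_j\|_X=\|\chi_{\Delta_j}\|_X=\phi_X(m(\Delta_j))=\phi_X(2^j),\qquad j\in\hj,
\]
since $m(\Delta_j)=2^{j+1}-2^j=2^j$. I would also recall that $\phi_X$ is quasi-concave, hence nondecreasing and satisfying $\phi_X(2t)\le2\phi_X(t)$ for all $t>0$, so that $s_{j+1}\le2s_j$. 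Because $X$ is of fundamental type, it then suffices to establish the six displayed equalities with $\alpha_X,\beta_X,\alpha_X^0,\beta_X^0,\alpha_X^\infty,\beta_X^\infty$ replaced respectively by the dilation indices $\mu_{\phi_X},\nu_{\phi_X},\mu_{\phi_X}^0,\nu_{\phi_X}^0,\mu_{\phi_X}^\infty,\nu_{\phi_X}^\infty$ of $\phi_X$.

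The heart of the argument is, in each of the six cases, to sandwich the relevant supremum of ratios of the $s_j$'s between the appropriate value of a dilation function of $\phi_X$ and twice that value. For instance, for $\nu_{\phi_X}$ I would show that for every $n\ge1$
\[
\sup_{k\in\hj}\frac{s_{k+n}}{s_k}\ \le\ M_{\phi_X}(2^n)\ \le\ 2\sup_{k\in\hj}\frac{s_{k+n}}{s_k}.
\]
The left inequality comes from restricting the supremum defining $M_{\phi_X}(2^n)=\sup_{s>0}\phi_X(2^ns)/\phi_X(s)$ to the points $s=2^k$. For the right one, given $s>0$ I pick $j\in\hj$ with $s\in\Delta_j=[2^j,2^{j+1})$; then $\phi_X(s)\ge\phi_X(2^j)=s_j$ and, by monotonicity and doubling, $\phi_X(2^ns)\le\phi_X(2^{n+j+1})=s_{n+j+1}\le2s_{n+j}$, whence $\phi_X(2^ns)/\phi_X(s)\le2s_{n+j}/s_j\le2\sup_ks_{k+n}/s_k$, and I take the supremum over $s$. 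The other five cases go the same way, with $2^{-n}$ in place of $2^n$ for the $\alpha$-type indices and with $M_{\phi_X}^0$, $M_{\phi_X}^\infty$ in place of $M_{\phi_X}$ for the partial indices; the only extra care needed is that the dyadic interval $\Delta_j$ containing $s$ must keep every index occurring in the estimate on the correct half-line. For example, for $\nu_{\phi_X}^0$ the supremum in $M_{\phi_X}^0(2^n)$ runs over $0<s\le\min(1,2^{-n})=2^{-n}$, which forces $j\le-n$ and hence $n+j\le0$ — exactly what makes $k=n+j$ admissible in $\sup_{k\le0}s_k/s_{k-n}$ — while in the $\infty$-cases the constraints $s\ge1$, resp. $s\ge2^n$, force $j\ge0$, resp. $j\ge n$.

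Finally, I would apply $\tfrac1n\log_2(\cdot)$ to these sandwich inequalities and let $n\to\infty$: the factor $2$ contributes only an additive $\tfrac1n\to0$, and since $\tfrac1n\log_2M_{\phi_X}(2^n)\to\nu_{\phi_X}$ (the limit existing by submultiplicativity of $M_{\phi_X}$ in the argument), I conclude $\lim_n\tfrac1n\log_2\sup_{k\in\hj}s_{k+n}/s_k=\nu_{\phi_X}$, hence $\beta_X=\nu_{\phi_X}$ equals the claimed limit; the remaining five follow identically (for $\alpha_X$ after the harmless reindexing $k\mapsto k-n$ that rewrites $\sup_ks_{k-n}/s_k$ as $\sup_ks_k/s_{k+n}$). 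I expect the only real friction to be the bookkeeping of these index ranges in the partial cases, together with the off-by-one shifts $s_{m+1}\le2s_m$, which are absorbed harmlessly into the constant $2$; everything else is a routine estimate.
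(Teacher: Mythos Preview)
Your proposal is correct and follows essentially the same route as the paper: identify $s_j=\phi_X(2^j)$, use quasi-concavity of $\phi_X$ to pass from the continuous suprema defining $M_{\phi_X}$, $M_{\phi_X}^0$, $M_{\phi_X}^\infty$ to dyadic ones (up to a factor $2$ that vanishes in the limit), and then invoke the fundamental-type hypothesis. The paper compresses all of this into a single sentence, while you spell out the sandwich inequalities and the index bookkeeping for the partial cases explicitly; there is no substantive difference in strategy.
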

\begin{proof}
Since $X$ is of fundamental type, by the definitions from Subsection
\ref{prel3} and the quasi-concavity of the fundamental function $\phi_X$, we have
\begin{align*}
\alpha_X &= -\lim_{n\to\infty}\frac{1}{n}\log_2
\sup_{k\in\mathbb{Z}}\frac{\phi_X(2^k)}{\phi_X(2^{n+k})}, 
&\beta_X &=\lim_{n\to\infty}\frac{1}{n}\log_2 \sup_{k\in\mathbb{Z}}
\frac{\phi_X(2^{n+k})}{\phi_X(2^{k})},
\nonumber\\
\alpha_X^0&= -\lim_{n\to\infty}\frac{1}{n}\log_2 \sup_{k\le 0}\frac{\phi_X(2^{k-n})}{\phi_X(2^{k})}, 
&\beta_X^0&=\lim_{n\to\infty}\frac{1}{n}\log_2 \sup_{k\le 0}\frac{\phi_X(2^{k})}{\phi_X(2^{k-n})},
\nonumber\\ \qquad
\alpha_X^\infty&= -\lim_{n\to\infty}\frac{1}{n}\log_2 \sup_{k\ge 0}\frac{\phi_X(2^{k})}{\phi_X(2^{k+n})}, 
&\beta_X^\infty &=\lim_{n\to\infty}\frac{1}{n}\log_2 \sup_{k\ge 0}\frac{\phi_X(2^{k+n})}{\phi_X(2^{k})},
%\label{equa16b}
\end{align*}
Since $\phi_X(2^j)=\|\chi_{\Delta_j}\|_X=\|e_j\|_{E_X}$ for all $j\in\hj$, we obtain the desired result. 
\end{proof}

The next result of this section is in fact known (see \cite[Proposition~5.1]{Kal}). It shows that the Banach lattice $E_X$ associated to a r.i. space $X$ can be defined also in a different way, when one starts from a Banach sequence lattice satisfying inequalities of type \eqref{second view} and having positive lower shift exponent. Because the proof of this assertion in \cite{Kal} is only outlined, for convenience of the reader, we provide here it with a  detailed proof. 

\begin{prop}
\label{prop:latttice and symm}
Let $E$ be a Banach sequence lattice, with $\gamma_E>0$,  and let $X$ be a r.i. space satisfying
\begin{equation}
\label{equa10} 
\|x\|_{X}\asymp \Big\|\sum_{k\in\hj} x^*(2^k)e_{k}\Big\|_E.
\end{equation}
Then $E_X=E$ (with equivalence of norms).
\end{prop}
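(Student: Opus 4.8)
The plan is to show both inclusions $E_X \subseteq E$ and $E \subseteq E_X$ with norm equivalence, using \eqref{equa10} together with the positivity of $\gamma_E$ to control the ``spreading'' of the rearrangement that occurs when one passes from a general sequence $a=(a_k)$ to the decreasing rearrangement of the step function $Sa$.

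First I would recall what each norm measures. By definition $\|\sum a_k e_k\|_{E_X} = \|Sa\|_X$, where $Sa = \sum a_k \chi_{\Delta_k}$. Applying the hypothesis \eqref{equa10} to the function $x = Sa$, we get $\|Sa\|_X \asymp \|\sum_k (Sa)^*(2^k) e_k\|_E$. So the whole problem reduces to comparing $\|\sum_k (Sa)^*(2^k) e_k\|_E$ with $\|\sum_k a_k e_k\|_E = \|a\|_E$. Now $(Sa)^*$ is the decreasing rearrangement of a function that is constant on each dyadic block $\Delta_k$ (of length $2^k$), so $(Sa)^*$ is itself a step function constant on dyadic blocks, whose values are a rearrangement of the $|a_k|$'s but with \emph{multiplicities weighted by the block lengths}: the value $|a_k|$ occupies total measure $2^k$. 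Reading off the values at the points $2^j$ then amounts to the following: if we sort the $|a_k|$ in decreasing order as $|a_{k_1}| \ge |a_{k_2}| \ge \cdots$, then $(Sa)^*(2^j) = |a_{k_i}|$ where $i$ is determined by $\sum_{l<i} 2^{k_l} \le 2^{j+1}-1 < \sum_{l\le i} 2^{k_l}$ or so. The upshot, which I would make precise, is that the sequence $\big((Sa)^*(2^j)\big)_{j\in\hj}$ is, up to a bounded shift, a ``decreasing rearrangement'' of $(a_k)$ in which each entry $|a_k|$ is repeated roughly $k - (\text{something})$ extra times — more carefully, the nonincreasing rearrangement of $(Sa)^*(2^j)$ and the nonincreasing rearrangement of $(a_k)$ differ by an operator that is dominated by a bounded number of shift operators $\tau_{\pm m}$ on the ``downward'' and ``upward'' tails.

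The key technical step, and the main obstacle, is to bound $\|\sum_j (Sa)^*(2^j) e_j\|_E$ above and below by constant multiples of $\|a\|_E$. For one direction, one uses that rearranging a sequence into decreasing order and possibly duplicating entries a controlled number of times is dominated by a finite composition of shifts: since $\gamma_E > 0$, we have $\|\tau_{-n}\|_{E\to E} \le C\, 2^{-\gamma_E n/2}$ for large $n$, so the ``summed-up duplicated tail'' converges geometrically and the resulting operator is bounded on $E$; moreover the norm of $\sum_j (Sa)^*(2^j)e_j$ in $E$ is dominated by $\|a\|_E$ up to a constant. This is essentially the argument Kalton sketches: the condition $\gamma_E>0$ is exactly what makes the geometric series of shift norms summable, so that the ``block-averaging/rearranging'' map is bounded. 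For the reverse inequality one argues more directly: $a_k = (Sa)$ restricted to $\Delta_k$, and $|a_k| = (Sa)(2^k) \le (Sa)^*(\text{something} \le 2^k)$ is bounded by a shifted copy of $(Sa)^*(2^j)$, so $\|a\|_E \le \|\tau_m\|_E \cdot \|\sum_j (Sa)^*(2^j)e_j\|_E$ with a bounded $m$; here no smallness of shift norms is needed, just their finiteness (which is part of the standing assumption on $E$).

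Finally, combining the two inequalities with \eqref{equa10} gives $\|\sum a_k e_k\|_{E_X} = \|Sa\|_X \asymp \|\sum_j (Sa)^*(2^j)e_j\|_E \asymp \|a\|_E$, which is precisely $E_X = E$ with equivalence of norms. I would organize the write-up as: (1) reduce via \eqref{equa10} to comparing $\|\sum_j (Sa)^*(2^j)e_j\|_E$ and $\|a\|_E$; (2) describe explicitly the nonincreasing rearrangement of the dyadic step function $Sa$ and identify the values $(Sa)^*(2^j)$; (3) prove the upper bound using summability of $\|\tau_{-n}\|_E$ (this is where $\gamma_E>0$ enters and is the crux); (4) prove the easy lower bound via a single bounded shift; (5) assemble. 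The one point demanding care is making the combinatorics of step (2) rigorous — tracking exactly how block lengths $2^k$ translate into multiplicities after rearrangement — and then checking in step (3) that the associated rearrangement-with-duplication operator is genuinely dominated, in the lattice sense, by $\sum_{n\ge 0} \|\tau_{-n}\|_E \, |\tau_{-n}(\cdot)|$ or a similar absolutely convergent expression.
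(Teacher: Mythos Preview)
Your plan works, but the combinatorial bookkeeping you propose in step (2) is an unnecessary detour and a potential quagmire. All you actually need are the two pointwise inequalities
\[
(Sa)^*(2^j)\ \le\ \sup_{i\ge j}|a_i|\qquad\text{and}\qquad |a_k|\ \le\ (Sa)^*(2^{k-1}),
\]
each of which follows in one line from $m(\Delta_k)=2^k$ and $\sum_{i<j}m(\Delta_i)=2^j$. Writing $b_j:=(Sa)^*(2^j)$, step (3) then reads $\|b\|_E\le\bigl\|\sum_{n\ge0}|\tau_{-n}a|\bigr\|_E\le\sum_{n\ge0}\|\tau_{-n}\|_{E\to E}\,\|a\|_E<\infty$ (this is where $\gamma_E>0$ enters), and step (4) is simply $\|a\|_E\le\|\tau_1\|_{E\to E}\,\|b\|_E$. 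No explicit identification of the values $(Sa)^*(2^j)$ in terms of sorted $|a_k|$'s with multiplicities is required.

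The paper arranges the argument a little differently. It introduces the nonincreasing majorant $\hat a_k:=\sup_{i\ge k}|a_i|=\max_{n\ge0}|(\tau_{-n}a)_k|$ and first observes that for \emph{nonincreasing} sequences the equivalence $\|\cdot\|_{E_X}\asymp\|\cdot\|_E$ is immediate from \eqref{equa10}; both norms of a general $a$ are then compared with those of $\hat a$ via $|a|\le\hat a\le\sum_{n\ge0}|\tau_{-n}a|$. For the direction $\|a\|_{E_X}\lesssim\|a\|_E$ this is the same computation as yours. For $\|a\|_E\lesssim\|a\|_{E_X}$, however, the paper does not use your single-shift bound; instead it estimates $\|\hat a\|_{E_X}=\|S\hat a\|_X\le\sum_{j\ge0}\|\sigma_{2^{-j}}S|a|\|_X$ and derives the auxiliary dilation inequality $\|\sigma_{2^{-j}}x\|_X\le C\|\tau_{-j}\|_{E\to E}\|x\|_X$ from \eqref{equa10}. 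Your step (4) is shorter; the paper's route has the minor advantage that it uses only the shifts $\tau_{-n}$, $n\ge0$, so the hypothesis $\gamma_E>0$ by itself suffices without separately invoking boundedness of $\tau_1$.
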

\begin{proof}
Let us note first that for each nonincreasing and nonnegative sequence $a=(a_{k})_{k\in\hj}$ the norms $\|a\|_{E_X}$ and $\|a\|_{E}$ are equivalent (with the equivalence constant from \eqref{equa10}). Indeed, since $(\sum_{i\in\hj} a_i\chi_{\Delta_i})^*(2^k)=a_k$, $k\in\hj$, by the definition of $E_X$ and \eqref{equa10}, we have
$$
\|a\|_{E_X}=\Big\|\sum_{k\in\hj} a_{k}\chi_{\Delta_k}\Big\|_X\asymp\Big\|\sum_{k\in\hj} a_{k}e_{k}\Big\|_E=\|a\|_{E}.$$
Assume now that $a=(a_{k})_{k\in\hj}\in E_X$ is arbitrary. The condition $\gamma_E>0$ ensures that for some $\epsilon>0$ and $C>0$ we have  
\begin{equation}
\label{equa10a} 
\|\tau_{-j}\|_{E\to E}\le C2^{-\epsilon j},\;\;j\in\mathbb{N}.
\end{equation}
Consequently, since the sequence $\left(\max_{j\ge 0}|\tau_{-j}a|\right)_k$, $k=1,2,\dots$, is nonincreasing and
$$
|a_k|\le (\max_{j\ge 0}|\tau_{-j}a|)_k,\;\;k\in\hj,$$
by the above observation, we have
\begin{eqnarray*}
\|a\|_{E_X} &\le& \left\|\left(\max_{j\ge 0}|\tau_{-j}a|\right)_k\right\|_{E_X}\asymp 
\left\|\left(\max_{j\ge 0}|(\tau_{-j}a|\right)_k\right\|_{E}\\ &\le& \Big\|\Big(\sum_{j=0}^\infty |\tau_{-j}a|\Big)_k\Big\|_{E}\le 
\sum_{j=0}^\infty \|\tau_{-j}\|_{E\to E} \|a\|_E\\&\le& 
\sum_{j=0}^\infty 2^{-\epsilon j} \|a\|_E = C'\|a\|_E. 
\end{eqnarray*}

Conversely, for each $a=(a_{k})_{k=1}^\infty\in E$ 
\begin{eqnarray*}
\|a\|_{E} &\le& \left\|\left(\max_{j\ge 0}|\tau_{-j}a|\right)_k\right\|_{E}\asymp 
\left\|\left(\max_{j\ge 0}|\tau_{-j}a|\right)_k\right\|_{E_X}\nonumber\\ &=& \Big\|\sum_{k\in\hj} \left(\max_{j\ge 0}|\tau_{-j}a|\right)_k\chi_{\Delta_k}\Big\|_{X}\nonumber\\&\le&
\Big\|\sum_{k\in\hj}\sum_{j=0}^\infty (|\tau_{-j}a|)_k\chi_{\Delta_k}\Big\|_{X}\nonumber\\&\le&
\sum_{j=0}^\infty\Big\|\sum_{k\in\hj} (|\tau_{-j}a|)_k\chi_{\Delta_k}\Big\|_{X}. 
%\label{equa11}
\end{eqnarray*}
%For arbitrary $l\in\mathbb{N}$ we define the dilation operator $\sigma_{1/l}$ by
%$$
%\sigma_{1/l}x:=\frac1l\sum_{k=1}^\infty\sum_{i=1}^l x_{(k-1)l+i}e_k,\;\;\mbox{where}\;\;x=(x_k)_{k=1}^\infty.$$
This inequality combined together with the observation that
%\begin{eqnarray*}
$$
\sum_{k\in\hj} (|\tau_{-j}a|)_k\chi_{\Delta_k} =
\sum_{k\in\hj}|a_{k+j}|\chi_{\Delta_k}=
\sum_{k\in\hj}|a_{k}|\chi_{\Delta_{k-j}}=\sigma_{2^{-j}}\Big(\sum_{k\in\hj} |a_k|\chi_{\Delta_k}\Big)
$$
%\end{eqnarray*}
%where $a^{(j)}=\sum_{k=j+1}^\infty |a_{k}|\sum_{i=2^{k-1}}^{2^{k}-1}e_i$.
yields
\begin{equation}
\label{equa12} 
\|a\|_{E} \le \sum_{j=0}^\infty\Big\|\sigma_{2^{-j}}\Big(\sum_{k\in\hj} |a_k|\chi_{\Delta_k}\Big)\Big\|_{X}.
\end{equation}
%\begin{equation}
%\label{equa12} 
%\sum_{k\in\hj} (|\tau_{-j}a|)_k\chi_{\Delta_k}
%\le \sigma_{2^{-j}}\Big(\sum_{k\in\hj} |a_k|\chi_{\Delta_k}\Big).
%\end{equation}

Next, we claim that there exists a constant $C > 0$ such that for every $x\in X$ and all integers $j\ge 0$
\begin{equation}
\label{equa13} 
\|\sigma_{2^{-j}}x\|_X\le C\|\tau_{-j}\|_{E\to E}\|x\|_X.
\end{equation}
Since $(\sigma_{\tau}x)^*=\sigma_{\tau}(x^*)$, $\tau>0$, we can assume that $x=x^*$. Then, by \eqref{equa10}, we have
\begin{eqnarray*}
\|\sigma_{2^{-j}}x\|_X &\asymp& \Big\|\sum_{k\in\hj}x(2^{k-j})e_{k}\Big\|_E=\Big\|\tau_{{-j}}\Big(\sum_{k\in\hj}x(2^{k})e_{k}\Big)\Big\|_E\\ &\le& \|\tau_{{-j}}\|_{E\to E}\Big\|\sum_{k\in\hj}x(2^{k})e_{k}\Big\|_E\le C\|\tau_{{-j}}\|_{E\to E}\|x\|_X,
\end{eqnarray*}
and \eqref{equa13} follows.

Applying estimate \eqref{equa13} to a function $x=\sum_{k\in\hj} a_k\chi_{\Delta_k}$, we get
\begin{eqnarray*}
\Big\|\sigma_{2^{-j}}\Big(\sum_{k\in\hj} a_k\chi_{\Delta_k}\Big)\Big\|_X &\le& C\|\tau_{-j}\|_{E\to E}\Big\|\sum_{k\in\hj} a_k\chi_{\Delta_k}\Big\|_X\nonumber\\
&=& C\|\tau_{-j}\|_{E\to E}\|a\|_{E_X}.
\label{equa14}
\end{eqnarray*}
Therefore, from \eqref{equa12} and \eqref{equa10a} it follows 
$$
\|a\|_E \le C\sum_{j=0}^\infty\|\tau_{-j}\|_{E\to E}\|a\|_{E_X}\le 
C\sum_{j=0}^\infty 2^{-\epsilon j}\|a\|_{E_X}\le C\|a\|_{E_X},
$$
%\begin{eqnarray*}
%\|a\|_E &\le& \sum_{j=0}^\infty\Big\|\sum_{k\in\hj} |\tau_{-j}a|(k)\chi_{\Delta_k}\Big\|_X\le  \sum_{j=0}^\infty\Big\|\sigma_{2^{-j}}\Big(\sum_{k=1}^\infty a_{k}\chi_{\Delta_k}\Big)\Big\|_X\\ &\le& 
%C\sum_{j=0}^\infty\|\tau_{-j}\|_{E\to E}\|a\|_{E_X}\le 
%C\sum_{j=0}^\infty 2^{-\epsilon j}\|a\|_{E_X}\le C\|a\|_{E_X},
%\end{eqnarray*}
completing the proof.
\end{proof}

From Proposition \ref{prop:latttice and symm} and Lemma  \ref{dilation}
it follows

\begin{cor}
\label{cor:basic}
Let $X$ be a r.i. space such that equivalence \eqref{equa10} holds for some Banach sequence lattice $E$ such that $\gamma_E>0$. Then, $\alpha_X>0$.
\end{cor}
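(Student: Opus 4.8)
The plan is to combine the two earlier results in the most direct way possible. The statement to be proved, Corollary~\ref{cor:basic}, asserts: if a r.i. space $X$ satisfies the equivalence \eqref{equa10} for some Banach sequence lattice $E$ with $\gamma_E>0$, then $\alpha_X>0$. The natural route is: first invoke Proposition~\ref{prop:latttice and symm} to conclude that $E_X=E$ with equivalence of norms; then use the index identity $\alpha_X=\gamma_{E_X}$ from Lemma~\ref{dilation}; and finally observe that since $E_X=E$ up to equivalence of norms, the shift operators $\tau_{-n}$ act on the two lattices with uniformly comparable norms, so $\gamma_{E_X}=\gamma_E>0$.

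Let me lay out the steps in order. First, I would note that the hypothesis of Corollary~\ref{cor:basic} is exactly the hypothesis of Proposition~\ref{prop:latttice and symm} (a Banach sequence lattice $E$ with $\gamma_E>0$ and a r.i. space $X$ with $\|x\|_X\asymp\|\sum_k x^*(2^k)e_k\|_E$), so that proposition applies verbatim and gives $E_X=E$ with equivalence of norms: there are constants $c,C>0$ with $c\|a\|_E\le\|a\|_{E_X}\le C\|a\|_E$ for all $a$. Second, from this norm equivalence, for every $n\in\mathbb{N}$ and every $a\in E$ one has $\|\tau_{-n}a\|_{E_X}\le C\|\tau_{-n}a\|_E\le C\|\tau_{-n}\|_{E\to E}\|a\|_E\le (C/c)\|\tau_{-n}\|_{E\to E}\|a\|_{E_X}$, hence $\|\tau_{-n}\|_{E_X\to E_X}\le (C/c)\|\tau_{-n}\|_{E\to E}$; taking $-\lim_{n\to\infty}\frac1n\log_2$ of both sides the multiplicative constant $C/c$ contributes nothing, so $\gamma_{E_X}\ge\gamma_E$. (The reverse inequality holds by symmetry, though it is not needed.) Third, Lemma~\ref{dilation} gives $\alpha_X=\gamma_{E_X}$, so $\alpha_X=\gamma_{E_X}\ge\gamma_E>0$, which is the claim.

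Actually, the chain can be shortened: once $E_X=E$ with equivalence of norms, the shift exponent $\gamma$ is manifestly an isomorphic invariant of the lattice (it is defined purely through asymptotics of $\log_2\|\tau_{-n}\|$, which are insensitive to a fixed multiplicative renorming), so $\gamma_{E_X}=\gamma_E$ without any computation; then $\alpha_X=\gamma_{E_X}=\gamma_E>0$. This is essentially how the corollary is stated to follow ``from Proposition~\ref{prop:latttice and symm} and Lemma~\ref{dilation}'', and the proof is just the assembly of those two facts. There is no real obstacle here: the only point requiring the tiniest care is the remark that passing to an equivalent norm does not change the shift exponent $\gamma$, and this is immediate from the definition of $\gamma_E$ as a logarithmic limit. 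If one wants to be scrupulous, I would spell out the two-line estimate $\|\tau_{-n}\|_{E_X\to E_X}\le(C/c)\|\tau_{-n}\|_{E\to E}$ displayed above and note that the factor $C/c$ drops out in the limit $-\lim_n\frac1n\log_2(\cdot)$.

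\begin{proof}
By hypothesis, $E$ is a Banach sequence lattice with $\gamma_E>0$ and $X$ is a r.i. space satisfying the equivalence \eqref{equa10}. These are precisely the assumptions of Proposition~\ref{prop:latttice and symm}, which therefore yields $E_X=E$ with equivalence of norms; let $c,C>0$ be such that $c\|a\|_E\le\|a\|_{E_X}\le C\|a\|_E$ for all $a$. For any $n\in\mathbb{N}$ and $a\in E_X$ we then get
$$
\|\tau_{-n}a\|_{E_X}\le C\|\tau_{-n}a\|_E\le C\|\tau_{-n}\|_{E\to E}\|a\|_E\le \frac{C}{c}\,\|\tau_{-n}\|_{E\to E}\,\|a\|_{E_X},
$$
so $\|\tau_{-n}\|_{E_X\to E_X}\le \frac{C}{c}\,\|\tau_{-n}\|_{E\to E}$. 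Applying $-\lim_{n\to\infty}\frac1n\log_2(\cdot)$ (the constant $C/c$ contributes $0$ in the limit), we obtain $\gamma_{E_X}\ge\gamma_E$. Finally, Lemma~\ref{dilation} gives $\alpha_X=\gamma_{E_X}$, and hence $\alpha_X=\gamma_{E_X}\ge\gamma_E>0$.
\end{proof}
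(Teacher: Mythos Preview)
Your proposal is correct and follows exactly the route the paper indicates: the paper simply records that the corollary follows ``from Proposition~\ref{prop:latttice and symm} and Lemma~\ref{dilation}'' without further elaboration, and your argument is precisely the intended unpacking of that sentence (apply Proposition~\ref{prop:latttice and symm} to get $E_X=E$ up to equivalent norms, observe that $\gamma$ is invariant under equivalent renorming, and conclude via $\alpha_X=\gamma_{E_X}$ from Lemma~\ref{dilation}).
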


\vskip0.5cm

\section{Some connections between dilations in r.i. function spaces and shifts in corresponding sequence lattices.}

Let ${{\sigma}}x(t)=x(t/2)$, $t>0$ and ${{{\sigma}}_\lambda}:={{{\sigma}}}-\lambda I$, $\lambda>0$. We show that certain properties of the operators ${{{\sigma}}_\lambda}$ in a r.i. function space $X$ and ${\tau_\lambda}={\tau}-\lambda I$ in the corresponding Banach sequence lattice $E_X$, which was introduced in the preceding section, are connected in a direct way.

Putting as above $Sa(t)=\sum_{k\in\hj} a_k\chi_{\Delta_k}(t)$, where $\Delta_k=[2^k,2^{k+1})$, $k\in\hj$, for every sequence $a=(a_k)_{k\in\hj}$ we have
\begin{equation}
\label{eq15} 
\begin{split}
{\sigma}_\lambda Sa(t) &=\sum_{k\in\hj}
a_k\chi_{\Delta_k}(t/2)-\lambda \sum_{k\in\hj}
a_k\chi_{\Delta_k}(t)
\\
&=\sum_{k\in\hj} a_k\chi_{\Delta_{k+1}}(t)-\lambda
\sum_{k\in\hj} a_k\chi_{\Delta_k}(t)
\\
&=\sum_{k\in\hj} \big (\tau_\lambda a\big)_k\chi_{\Delta_k}(t)=S{\tau_\lambda}a(t),\;\;t>0.
\end{split}
\end{equation}
Hence, appealing to the definition of $E_X$, we obtain
\begin{equation}
\label{main equiv}
 \|{{{\sigma}}_\lambda} {Sa}\|_{{X}}=\|S{\tau_\lambda}a\|_{{X}}=\|{\tau_\lambda}a\|_{{E_X}}.
\end{equation}

\begin{prop}\label{Proposition 1}
For every r.i. space $X$ on $(0,\infty)$ and any $\lambda>0$ we have the following: 

{\rm(i)} ${{{\sigma}}_\lambda} $ is injective in $X$ if and only if $\tau_\lambda$ is injective in ${E_X}${\rm;}

{\rm(ii)} if ${{{\sigma}}_\lambda} $ is closed in $X$, then $\tau_\lambda$ is closed in ${E_X}${\rm;}

{\rm(iii)} if $\tau_\lambda$ is injective and closed in ${E_X}$, then ${{{\sigma}}_\lambda} $ is closed in ${X}$.
\end{prop}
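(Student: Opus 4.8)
The plan is to exploit the identity \eqref{main equiv}, namely $\|\sigma_\lambda Sa\|_X=\|\tau_\lambda a\|_{E_X}$, which already says that $\sigma_\lambda$ restricted to the closed subspace $S(E_X)\subseteq X$ is, via the isometry $S$, unitarily equivalent to $\tau_\lambda$ on $E_X$. Statement (i) is then immediate: if $Sa$ is an arbitrary element of $S(E_X)$, then $\sigma_\lambda Sa=0$ iff $S\tau_\lambda a=0$ iff $\tau_\lambda a=0$ (since $S$ is an isometric embedding, hence injective), so $\ker\sigma_\lambda|_{S(E_X)}=S(\ker\tau_\lambda)$. But to get injectivity of $\sigma_\lambda$ on all of $X$, I need to know that $\ker\sigma_\lambda$ is automatically contained in $S(E_X)$, or argue directly. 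The cleanest route is to observe that $\sigma_\lambda x=0$ means $\sigma x=\lambda x$, i.e. $x(t/2)=\lambda x(t)$ a.e.; iterating gives $x(t)=\lambda^{-n}x(2^{-n}t)$ and $x(t)=\lambda^n x(2^n t)$, which forces $x$ to be "self-similar" and in particular, after passing to $x^*$ and using \eqref{second view}, its behaviour is governed by the sequence $x^*(2^k)$ — so $x$ is an eigenvector iff the sequence $(x^*(2^k))$ (suitably signed) is an eigenvector of $\tau$; one reduces to $a\in E_X$ and quotes the $S(E_X)$ computation. I should double-check whether the authors instead simply note that $Q$ (the averaging projection) commutes appropriately so that any eigenvector can be replaced by one of the form $Sa$.

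For (ii), suppose $\sigma_\lambda$ is closed in $X$; I want $\tau_\lambda$ closed in $E_X$. Take $a_n\to a$ in $E_X$ with $\tau_\lambda a_n\to b$ in $E_X$. Apply $S$: $Sa_n\to Sa$ and $S\tau_\lambda a_n=\sigma_\lambda Sa_n\to Sb$ in $X$, both because $S$ is isometric. Since $\sigma_\lambda$ is closed, $Sa\in\mathrm{dom}$ (it is bounded, so the domain is all of $X$; closedness of a bounded operator is automatic unless the authors mean "closed range" — here closedness of $\sigma_\lambda=\sigma-\lambda I$ as an operator is trivial, so the intended meaning must be that $\sigma_\lambda$ has closed range). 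Re-reading: yes, $\sigma$ is bounded, so $\sigma_\lambda$ is bounded, hence "closed" must mean "has closed range". So (ii) should read: if $\sigma_\lambda$ has closed range in $X$, then $\tau_\lambda$ has closed range in $E_X$. With that reading: given $\tau_\lambda a_n\to b$ in $E_X$, we get $\sigma_\lambda(Sa_n)\to Sb$ in $X$; closed range of $\sigma_\lambda$ gives $Sb=\sigma_\lambda y$ for some $y\in X$; I must now descend $y$ back to an element of $E_X$, i.e. show $Sb\in\sigma_\lambda(S(E_X))$. This is the crux — the range of $\sigma_\lambda$ on $X$ a priori could hit $S(E_X)$ at points not coming from $S(E_X)$. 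I would apply the norm-one projection $Q$ from \eqref{projection Q}: $QSa=Sa$ for any $a$, and I would verify $Q\sigma_\lambda=\sigma_\lambda Q$ modulo a controlled error, or better, that $Q\sigma_\lambda Q$ agrees with $\sigma_\lambda$ on $S(E_X)$; then applying $Q$ to $Sb=\sigma_\lambda y$ gives $Sb=\sigma_\lambda(Qy)$ with $Qy\in S(E_X)$, whence $b\in\mathrm{Im}\,\tau_\lambda$ after transporting through $S$. The commutation $Q\sigma=\sigma Q$ should hold because $\sigma$ maps $\Delta_k$ onto $\Delta_{k+1}$ exactly and $Q$ averages over the $\Delta_k$'s — this dyadic compatibility is exactly why the whole reduction works, and it is the step I expect to need the most care.

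For (iii), assume $\tau_\lambda$ is injective with closed range in $E_X$; equivalently (by the open mapping theorem applied to $\tau_\lambda\colon E_X\to\mathrm{Im}\,\tau_\lambda$) there is $c>0$ with $\|\tau_\lambda a\|_{E_X}\ge c\|a\|_{E_X}$ for all $a\in E_X$; by \eqref{main equiv} this means $\|\sigma_\lambda Sa\|_X\ge c\|Sa\|_X$ for all $a\in E_X$, i.e. $\sigma_\lambda$ is bounded below on $S(E_X)$. To conclude $\sigma_\lambda$ has closed range on all of $X$, I want a bounded-below estimate $\|\sigma_\lambda x\|_X\ge c'\|x\|_X$ for all $x\in X$. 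Again I would use the projection $Q$ together with the commutation $\sigma_\lambda Q=Q\sigma_\lambda$: for general $x\in X$, writing $x=Qx+(I-Q)x$, and using that $\sigma_\lambda$ restricted to $\ker Q$ (the mean-zero-on-each-$\Delta_k$ functions) is again the "doubling minus $\lambda$" and is bounded below there by essentially the same argument applied to the Banach lattice built from $\ker Q$ — alternatively, observe $(I-Q)\sigma=\sigma(I-Q)$ too, so $\sigma_\lambda$ splits as a direct sum over $\mathrm{Im}\,Q=S(E_X)$ and $\ker Q$, with the $S(E_X)$-part bounded below by hypothesis, and the $\ker Q$-part handled by noting $\sigma$ shifts the "block pattern" on $\ker Q$ the same dyadic way, so one obtains (via an $E$-type sequence lattice modelled on $\ker Q$ with the same shift exponents) a uniform lower bound there as well; assembling the two gives $\sigma_\lambda$ bounded below, hence closed range. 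The essential obstacle throughout (ii) and (iii) is the passage between the "diagonal" subspace $S(E_X)$ and the whole space $X$, and in every case the lever is the exact dyadic compatibility $\sigma\chi_{\Delta_k}=\chi_{\Delta_{k+1}}$, which makes $Q$ (and $I-Q$) commute with $\sigma$; once that commutation is in place, all three parts reduce to the transparent identity \eqref{main equiv}.
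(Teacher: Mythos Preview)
Your commutation observation $Q\sigma=\sigma Q$ is correct, and with it your sketches of (i) and (ii) line up with the paper's proof. For (i) the paper's move is to replace a kernel element $x$ by $|x|$ (legitimate since $\sigma_\lambda x=0$ forces $|x(t/2)|=\lambda|x(t)|$) and then apply $Q$; because $|x|\ge 0$ and $|x|\ne 0$, the averaged sequence $a_{|x|}\in E_X$ is nonzero and satisfies $\tau_\lambda a_{|x|}=0$. For (ii) the paper arrives at $\sigma_\lambda x=Sb$ for some $x\in X$, and then \emph{integrates over each $\Delta_k$} --- i.e.\ applies $Q$ --- to deduce $\tau_\lambda a_x=b$; this is exactly your ``apply $Q$ and descend'' step in different words.

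The genuine gap is in (iii). Your splitting $X=\mathrm{Im}\,Q\oplus\ker Q$ does yield a lower bound for $\sigma_\lambda$ on $\mathrm{Im}\,Q=S(E_X)$ from the hypothesis, but there is no mechanism to obtain a lower bound on $\ker Q$: the hypothesis concerns $\tau_\lambda$ on $E_X$, which speaks only about $\mathrm{Im}\,Q$. Your proposed fix --- ``an $E$-type sequence lattice modelled on $\ker Q$ with the same shift exponents'' --- does not work, because $\ker Q$ restricted to each $\Delta_k$ is already infinite-dimensional and no shift hypothesis on that piece has been assumed. Concretely, for any $1$-periodic $g$ with $\int_0^1 2^{u(1-\log_2\lambda)}g(u)\,du=0$, the function $x(t)=t^{-\log_2\lambda}g(\log_2 t)$ satisfies both $\sigma_\lambda x=0$ and $Qx=0$; dyadic truncations of such $x$ live in $\ker Q$ and can furnish approximate eigenvectors there that your $E_X$-information cannot, by itself, exclude.

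The paper avoids this entirely by exploiting the rearrangement-invariant structure rather than the projection $Q$. Assuming $\|x_n\|_X=1$ and $\|\sigma_\lambda x_n\|_X\to 0$, one invokes the inequality $\|\sigma(x_n^*)-\lambda x_n^*\|_X\le\|\sigma x_n-\lambda x_n\|_X$ (valid because $X$ is separable or has the Fatou property; see \cite[Theorems II.4.9, II.4.10 and Lemma~II.4.6]{KPS}) to replace each $x_n$ by its nonincreasing rearrangement. For nonnegative nonincreasing $x_n$ one then has both $\|\tau_\lambda a_{x_n}\|_{E_X}=\|Q\sigma_\lambda x_n\|_X\to 0$ and $\|a_{x_n}\|_{E_X}\ge\tfrac12$ (the latter via $x_n(2^{k+1})\le(a_{x_n})_k$ and $\|\sigma_2\|\le 2$), contradicting the assumed lower bound for $\tau_\lambda$. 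The passage to $x_n^*$ is the essential idea you are missing; it is what pulls a putative approximate eigenvector out of $\ker Q$ and into a place where $E_X$ sees it.
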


\begin{proof}

(i) The fact that the injectivity of ${\sigma}_\lambda $ implies that of $\tau_\lambda$ is a direct consequence of \eqref{main equiv}. 

Let us prove the converse. For a contradiction, suppose that there is $x\in X$, $x\ne 0$, with ${\sigma}_\lambda x=0$. Since 
$|{\sigma}_{\lambda} x|\ge \big|\,|x(t/2)|- \lambda|x(t)|\,\big|$, we may assume that $x(t)$ is nonnegative. Outside a set of zero measure, we have
\begin{equation}
\label{eq17} x(t/2)=\lambda\cdot x(t),\;\;t>0,
\end{equation}
whence it follows that
\begin{align}
\int_{\Delta_k}x(t)\,dt&=\frac{1}{\lambda} \int_{\Delta_k}x(t/2)\,dt=
\frac{2}{\lambda} \int_{\Delta_{k-1}}x(t)\,dt
\\
\intertext{or}
\int_{\Delta_k}x(t)\,dt&= \Big(\frac{2}{\lambda}\Big)^k
\int_{\Delta_{0}}x(t)\,dt,\quad k\in\hj.
\label{eq17.2}
\end{align}

Let $Q$ be the one norm averaging operator defined by \eqref{projection Q}. If $a_x=(2^{-k}\int_{\Delta_k}x(s)\,ds)_{k\in\hj}$, then $Qx=Sa_x$ and hence, by the definition of $E_X$, the sequence $a_x$ belongs to ${E_X}$.  Thus, from \eqref{eq17.2} it follows that the sequence $a=(a_k)_{k\in\hj}$, with $a_k:=\lambda^{-k}\int_{{\Delta_0}}x(s)\,ds$, also belongs to this space. At the same time, it is easily seen that $\tau_{\lambda}a=0$. Since $x\ge 0$ and $x\ne 0$, by \eqref{eq17} 
we deduce that $a\ne 0$. Therefore, $\tau_{\lambda}$ is not injective, which contradicts the hypothesis.

\smallskip
(ii) If $a^n=(a^n_k)_{k\in\hj}\in {E_X}$, $n=1,2,\dots$, and $\tau_{\lambda}a^n\to b=(b_k)$ in ${E_X}$, then, by \eqref{main equiv}, the functions $\{{\sigma}_{\lambda}S{a^n}\}$ form a Cauchy sequence in the space $X$. By assumption, ${\sigma}_{\lambda}S{a^n}\to y:= {\sigma}_{\lambda}x$, where $x\in X$. On the other hand, from \eqref{eq15} it follows that ${\sigma}_{\lambda}S{a^n}=S \tau_\lambda a^n\to Sb$. Therefore, ${\sigma}_{\lambda}x=Sb$, that is,
$$
x(t/2)-\lambda x(t)=\sum_{k\in \hj}b_k\chi_{\Delta_k}(t),\;\;t>0.$$
Integrating over $\Delta_k$, $k\in\hj$, gives then
$$
2\int_{\Delta_{k-1}}x(s)\,ds-\lambda \int_{\Delta_k}x(s)\,ds=2^kb_k,\;\;k\in\hj,$$
whence $\tau_\lambda a_x=b$. As $a_x\in {E_X}$, we conclude that
$b\in{\rm Im}\,\tau_{\lambda}$. Consequently, $\tau_{\lambda}$ is a closed operator.

\smallskip

(iii) 
To get a contradiction, assume that the operator ${\sigma}_{\lambda}$ is not closed. Then there exists a 
sequence $\{x_{n}\}\subset {X}$ with the properties:
\begin{equation}
\label{eq19.1}
\|x_{n}\|_{{X}}=1,\;\;n=1,2,\dots,\quad\mbox{and}\quad \|
{\sigma}_{\lambda}x_{n}\|_{{X}}\to 0.
\end{equation}
Since ${X}$ is separable or has the Fatou property, we have
$$
\|{\sigma}_{\lambda}x_{n}\|_X\ge \|{\sigma}(x_{n}^*)-\lambda x_{n}^*\|_X=\|{\sigma}_{\lambda}(x_{n}^*)\|_X
$$ 
(see e.g. \cite[Theorems II.4.9, II.4.10 and Lemma~II.4.6]{KPS}).
Hence, we may assume that each of the functions $x_{n}$ in \eqref{eq19.1} is nonnegative and nonincreasing. 

Next, if $Q$ is the averaging projection defined by \eqref{projection Q}, then for every $x\in X$ we have
\begin{align*}
Q{\sigma}_{\lambda}x &= \sum_{k\in\hj}
2^{-k}\int_{\Delta_k}{\sigma}_{\lambda}x(s)\,ds \cdot\chi_{\Delta_k}
\\
&= \sum_{k\in\hj} 2^{-k}\Big(\int_{\Delta_k}x(s/2)\,ds-
\lambda\int_{\Delta_k}x(s)\,ds\Big)
\cdot\chi_{\Delta_k}
\\
&= \sum_{k\in\hj}
2^{-k}\Big(2\int_{\Delta_{k-1}}x(s)\,ds-
\lambda\int_{\Delta_k}x(s)\,ds\Big)\cdot\chi_{\Delta_k}
\\
&= \sum_{k\in\hj} \big(a_x)_{k-1}-\lambda (a_x)_k\big)
\cdot\chi_{\Delta_k}= \sum_{k\in\hj} \big( \tau_\lambda
a_x\big)_k\cdot\chi_{\Delta_k}\\
&= S\tau_\lambda a_x,
\end{align*}
whence $\|\tau_\lambda a_x\|_{E_X}=\|Q{\sigma}_{\lambda}x\|_X$.
As $Q$ is bounded on ${X}$, from this equality and \eqref{eq19.1} it follows that $ \|\tau_\lambda a_{x_n}\|_{{E_X}}\to 0$ as $n\to\infty$. 
On the other hand, applying successively the definition of the norm in $E_X$, the monotonicity of each function $x_{n}$, the inequality $\|\sigma_2\|_{X\to X}\le 2$ and finally \eqref{eq19.1} once more, we get 
\begin{align*}
\|a_{x_n}\|_{{E_X}}&=\Big\| \sum_{k\in\hj}
2^{-k}\int_{\Delta_k} x_n(s)\,ds\cdot
\chi_{\Delta_k}\Big\|_{X} 
\\
&\ge \Big\|\sum_{k\in\hj}
x_n(2^{k+1})\cdot\chi_{\Delta_k}\Big\|_{X}\\
&\ge \|\sigma_{1/2}x_n\|_X\ge \frac{1}{2}\|x_n\|_X= \frac{1}{2}.
\end{align*}
Summarizing all, we see that the operator $\tau_\lambda$ fails to be an isomorphic embedding in the space ${E_X}$. Since it is injective, this means that $\tau_\lambda$ is not closed in this space, which contradicts the assumption.
\end{proof}

\vskip0.5cm

\section{A description of approximative eigenvalues of the shift operator in Banach sequence lattices.}

Let $E$ be a Banach sequence lattice such that the shift operator $\tau(a_k)=(a_{k-1})$ and its inverse $\tau_{-1}(a_k)=(a_{k+1})$  are bounded in $E$. Denote $s_k:=\|e_k\|_E$, where $e_k$, $k\in \hj$, are elements of the unit vector basis. Assume that $E$ is a lattice of "fundamental type", i.e.,  for every $n\in\mathbb{Z}$, the norms $\|\tau_{n}\|_{E\to E}$, $\|\tau_{n}^0\|_{E\to E}$ and $\|\tau_{n}^\infty\|_{E\to E}$ (see Subsection \ref{prel3}) can be calculated (up to equivalence with a constant independent of $n$) when these operators are restricted to the sets $\{e_k\}_{k\in\hj}$, $\{e_k\}_{k\le 0}$ and $\{e_k\}_{k\ge 0}$ respectively. More explicitly, 
\begin{align}
\label{equa16a} 
\|\tau_{-n}\|_{E\to E} &\asymp\sup_{k\in\hj}\frac{s_k}{s_{n+k}},\;\;n\in\mathbb{N}, &\|\tau_{n}\|_{E\to E}&\asymp\sup_{k\in\hj}\frac{s_{k+n}}{s_{k}},\;\;n\in\mathbb{N},\nonumber\\ 
\|\tau_{-n}^0\|_{E\to E}&\asymp\sup_{k\ge 0}\frac{s_k}{s_{n+k}},\;\;n\in\mathbb{N},&\|\tau_{n}^0\|_{E\to E}&\asymp\sup_{k\ge 0}\frac{s_{k+n}}{s_{k}},\;\;n\in\mathbb{N},\nonumber\\ 
\|\tau_{-n}^\infty\|_{E\to E}&\asymp\sup_{k\le 0}\frac{s_{k-n}}{s_{k}},\;\;n\in\mathbb{N},&\|\tau_{n}^\infty\|_{E\to E}&\asymp\sup_{k\le 0}\frac{s_k}{s_{k-n}},\;\;n\in\mathbb{N}.
\end{align}
Consequently, we have
\begin{align}
\gamma &= -\lim_{n\to\infty}\frac{1}{n}\log_2
\sup_{k\in\mathbb{Z}}\frac{s _k}{s _{n+k}}, 
&\delta&=\lim_{n\to\infty}\frac{1}{n}\log_2 \sup_{k\in\mathbb{Z}}
\frac{s_{k+n}}{s _{k}},
\nonumber\\
{\gamma^0}&= -\lim_{n\to\infty}\frac{1}{n}\log_2 \sup_{k\le 0}\frac{s_{k-n}}{s_{k}}, 
&\delta^0&=\lim_{n\to\infty}\frac{1}{n}\log_2 \sup_{k\le 0}\frac{s_k}{s_{k-n}},
\nonumber\\ \qquad
\gamma^\infty&= -\lim_{n\to\infty}\frac{1}{n}\log_2 \sup_{k\ge 0}\frac{s_k}{s_{n+k}}, 
&{\delta^\infty} &=\lim_{n\to\infty}\frac{1}{n}\log_2 \sup_{k\ge 0}\frac{s_{k+n}}{s_k}
\label{equa16b}
\end{align}
(for brevity, in this section we set $\gamma:=\gamma_E$, $\gamma^0:=\gamma_E^0$,  $\gamma^\infty:=\gamma_E^\infty$, $\delta:=\delta_E$, $\delta^0:=\delta_E^0$, $\delta^\infty:=\delta_E^\infty$).

It is easily seen that 
\begin{equation}
\label{eq19.10}
\gamma\le {\gamma^0}\le{\delta^0}\le\delta\;\;\mbox{and}\;\;\gamma\le{\gamma^\infty}\le{\delta^\infty}\le\delta.
\end{equation}
Moreover, reasoning in the same way as in \cite[Lemma 1]{ASun}, one can show that $\gamma=\min({\gamma^0},{\gamma^\infty})$ and 
$\delta=\max({\delta^0},{\delta^\infty})$.

We will be interested in properties of the operators $\mn=\tau-\lambda I$, $\lambda>0$, where $I$ is the identity in $E$. We show that, in terms of the above exponents, it is possible to determine the set of all parameters $\lambda$, for which $\mn$ is an
isomorphic embedding in $E$. Thereby, as consequence, we will identify the set of all approximate eigenvalues of the operator $\tau$. 

Let $P_+$ and $P_-$ denote the norm one projections on $E$ defined by
$$
P_+(a_k):=\sum_{k\ge 1} a_ke_k\;\;\mbox{and}\;\;P_-(a_k):=\sum_{k\le -1} a_ke_k,$$
and let $r(T)$ stand for the spectral radius of an operator $T$ bounded in $E$.

\begin{lemma}
\label{L1-new}
We have $r(\tau)=2^\delta$, $r(\tau_{-1})=2^{-\gamma}$, $r(\tau P_-)=2^{\delta _0}$ and $r(\tau_{-1}P_+)=\,2^{-\gamma _\infty}$.
\end{lemma}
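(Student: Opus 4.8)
The plan is to compute each spectral radius via the Gelfand formula $r(T)=\lim_{n\to\infty}\|T^n\|_{E\to E}^{1/n}$, and to match the resulting limits with the exponents $\gamma,\delta,\gamma^0,\delta^0,\gamma^\infty,\delta^\infty$ as they are defined in \eqref{equa16b}. Since $\tau$ is the shift, its powers are again shifts: $\tau^n=\tau_n$ and $\tau_{-1}^n=\tau_{-n}$. Thus $r(\tau)=\lim_n\|\tau_n\|_{E\to E}^{1/n}$ and $r(\tau_{-1})=\lim_n\|\tau_{-n}\|_{E\to E}^{1/n}$. By the "fundamental type" hypothesis \eqref{equa16a}, $\|\tau_n\|_{E\to E}\asymp\sup_{k\in\hj}s_{k+n}/s_k$ and $\|\tau_{-n}\|_{E\to E}\asymp\sup_{k\in\hj}s_k/s_{n+k}$; the multiplicative constants disappear after taking $n$-th roots and the limit, so $r(\tau)=2^{\delta}$ and $r(\tau_{-1})=2^{-\gamma}$ follow directly from the definitions in \eqref{equa16b}. (Existence of the limits is guaranteed by submultiplicativity/subadditivity of the relevant norm sequences, already invoked in Subsection~\ref{prel3}.)

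For the remaining two identities I would first establish that $(\tau P_-)^n$ and $(\tau_{-1}P_+)^n$ are, up to the harmless finitely many "boundary" coordinates, again truncated shifts, so that their norms are comparable to the one-sided suprema appearing in \eqref{equa16b}. Concretely, for $a$ supported on $\hj_-$ one checks by induction that $(\tau P_-)^n a = P_{\hj_-}(\tau_n P_{\hj_-} a)$ modulo the obvious bookkeeping, and more generally for arbitrary $a\in E$ one has $\|(\tau P_-)^n\|_{E\to E}\asymp \|\tau_n^0\|_{E\to E}\asymp\sup_{k\le 0}s_{k+n}/s_k$ — note the index set $k\le 0$ matches the definition of $\delta^0$ in \eqref{equa16b} once one accounts for the shift of the truncation window under iteration. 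Similarly $\|(\tau_{-1}P_+)^n\|_{E\to E}\asymp\sup_{k\ge 0}s_k/s_{n+k}$, which is exactly the quantity defining $\gamma^\infty$. Taking $n$-th roots and passing to the limit then gives $r(\tau P_-)=2^{\delta^0}$ and $r(\tau_{-1}P_+)=2^{-\gamma^\infty}$.

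The main obstacle is the careful bookkeeping of how the projections $P_-$, $P_+$ interact with the shift under iteration: $\tau P_-$ is not idempotent-friendly — applying $\tau$ then $P_-$ then $\tau$ again moves the "active window" of indices, and one must verify that the net effect over $n$ steps is (two-sidedly) comparable to $\tau_n^0$ rather than to something strictly smaller. The upper bound $\|(\tau P_-)^n\|\le\|\tau_n^0\|$-type estimate is the easy direction (each factor only loses mass); the lower bound requires exhibiting, for each $n$, a unit vector $e_k$ with $k\le 0$ suitably chosen (e.g. $k=-n$ or $k$ near the supremizing index) for which $(\tau P_-)^n$ does not annihilate the image, so that $\|(\tau P_-)^n e_k\|_E \gtrsim s_{k+n}\cdot$(something bounded below). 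One should also double-check the off-by-constant and off-by-finitely-many-indices discrepancies genuinely vanish in the Gelfand limit — they do, since $\lim_n c^{1/n}=1$ for any fixed $c>0$ and shifting the supremizing index by a bounded amount changes $\sup s_{k+n}/s_k$ only by a factor that is $O(\|\tau_{\pm1}\|^{O(1)})$, again a fixed constant. Once these comparisons are in place, the four equalities are immediate.
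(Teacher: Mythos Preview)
Your approach is correct and follows the same route as the paper: apply the Gelfand formula, identify the $n$-th powers, and match the resulting limits with \eqref{equa16b}. The only difference is that you over-complicate the second pair. The paper dispenses with your ``bookkeeping'' concerns by simply writing down the exact formula
\[
(\tau P_-)^n(a_k)=\sum_{k\le -n} a_k\,e_{k+n},\qquad (\tau_{-1}P_+)^n(a_k)=\sum_{k\ge n} a_k\,e_{k-n},
\]
which one verifies in one step (each application of $P_-\tau$ shifts the support boundary by one). From this, via \eqref{equa16a}, one reads off $\|(\tau P_-)^n\|\asymp\sup_{k\le 0}s_k/s_{k-n}$ and $\|(\tau_{-1}P_+)^n\|\asymp\sup_{k\ge 0}s_k/s_{k+n}$, and \eqref{equa16b} finishes the computation. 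So your ``main obstacle'' of tracking how the projection window moves under iteration is not an obstacle at all --- the window moves rigidly and the formula is exact, not merely two-sidedly comparable. Your upper/lower bound discussion and the worry about off-by-one issues are unnecessary once you have the explicit expression.
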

\begin{proof}
Since $\tau^n=\tau_n$ and $\tau_{-1}^n=\tau_{-n}$, $n\in\mathbb{N}$, then by assumption \eqref{equa16a} we have
$$
\|\tau^n\|_{E\to E}\asymp\sup_{k\in\hj}\frac{s_{k+n}}{s_{k}}\;\;\mbox{and}\;\;\|\tau^{-n}\|_{{E}\to{E}}\asymp\sup_{k\in\mathbb{N}}\frac{s_k}{s_{k+n}},\;\;n\in\mathbb{N}.$$
Therefore, from \eqref{equa16b} it follows that
$$
r(\tau)=\lim_{n\to\infty}\left(\sup_{k\in\hj}\frac{s_{k+n}}{s_{k}}\right)^{1/n}=2^\delta\;\;\mbox{and}\;\;r(\tau_{-1})=\lim_{n\to\infty}\left(\sup_{k\in\mathbb{Z}}\frac{s_k}{s_{k+n}}\right)^{1/n}=2^{-\gamma}.$$

Similarly, since
$$
(\tau P_-)^n(a_k)=\sum_{k\le -n} a_ke_{k+n}\;\;\mbox{and}\;\;(\tau_{-1}P_+)^n(a_k)=\;\sum_{k\ge n} a_ke_{k-n},\;\;n\in\mathbb{N},$$
by \eqref{equa16a}, it holds
$$
\|(\tau P_-)^n\|_{{E}\to{E}}=\sup_{k\le 0}\frac{s_k}{s_{k-n}}\;\;\mbox{and}\;\;\|(\tau_{-1}P_+)^n\|_{{E}\to{E}}=\sup_{k\ge 0}\frac{s_k}{s_{k+n}}.$$
As a result, in view of \eqref{equa16b}, we have
$$
r(\tau P_-)=\lim_{n\to\infty}\left(\sup_{k\le 0}\frac{s_k}{s_{k-n}}\right)^{1/n}=2^{\delta^0}\;\;\mbox{and}\;\;r(\tau_{-1}P_+)=\lim_{n\to\infty}\left(\sup_{k\ge 0}\frac{s_k}{s_{k+n}}\right)^{1/n}=2^{-\gamma^\infty}.$$
\end{proof}

\begin{theor}\label{L2-new}
Suppose a separable Banach sequence lattice $E$ satisfies condition \eqref{equa16a}. Then, we have

(i) if ${\gamma^\infty}\le {{\delta^0}}$, then the operator $\mn$ is an isomorphism in ${E}$ if and only if $\lambda\in (0,2^\gamma)\cup (2^{\delta},\infty)$;

(ii) if ${\gamma^\infty}>{{\delta^0}}$, then $\mn$ is an isomorphism in ${E}$ if and only if $\lambda\in (0,2^\gamma)\cup (2^{{\delta^0}},2^{{\gamma^\infty}})\cup (2^{\delta},\infty)$.
 
Moreover, if $ \lambda\in (0,2^\gamma)\cup (2^{\delta},\infty)$, then ${\rm Im}\,\mn=E$; if $\lambda\in (2^{{\delta^0}},2^{{\gamma^\infty}})$, then ${\rm Im}\,\mn$ is the closed subspace of codimension $1$ in $E$ consisting of all $(a_k)_{k\in\hj}\in{E}$ with
\begin{equation}
\label{equa15a} 
{\sum_{k\in \mathbb{Z}}} \lambda^k a_k=0.
\end{equation}
\end{theor}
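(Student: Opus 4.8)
The plan is to split $E=E_-\oplus E_+$, where $E_-$ is the closed linear span of $\{e_k:k\le 0\}$ and $E_+$ that of $\{e_k:k\ge 1\}$ (both ranges of norm–one coordinate projections), and to exploit that $\tau e_k=e_{k+1}$ never decreases indices, so $\tau E_+\subseteq E_+$. Hence, in this decomposition,
\[
\mn=\begin{pmatrix}A_\lambda&0\\ C_\lambda&D_\lambda\end{pmatrix},
\]
where $D_\lambda:=(\tau-\lambda I)|_{E_+}$ is a weighted unilateral forward shift shifted by $\lambda$, $A_\lambda$ is the compression of $\tau-\lambda I$ to $E_-$ — which on $E_-$ is $\widetilde\tau-\lambda I$ with $\widetilde\tau e_k=e_{k+1}$ $(k\le -1)$, $\widetilde\tau e_0=0$, a weighted backward shift after reversing the index — and $C_\lambda$ is the rank–one map $C_\lambda a=a_0e_1$. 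Before using this I dispose of the outer ranges: by Lemma \ref{L1-new} $\tau$ is invertible with $r(\tau^{-1})=2^{-\gamma}$ and $r(\tau)=2^\delta$, so $\mn=\tau(I-\lambda\tau^{-1})$ is invertible for $\lambda\in(0,2^\gamma)$ and $\mn=-\lambda(I-\lambda^{-1}\tau)$ is invertible for $\lambda>2^\delta$, by Neumann series; this settles these cases, with $\op=E$.

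For $\lambda\in[2^\gamma,2^\delta]$ I would read off, as in the proof of Lemma \ref{L1-new}, the radii $r(\widetilde\tau)=2^{\delta^0}$ and $r(\tau|_{E_+})=2^{\delta^\infty}$, and that the canonical one–sided inverses $R:=\tau^{-1}|_{E_-}$ (right inverse of $\widetilde\tau$) and $L:=P_+\tau^{-1}|_{E_+}$ (left inverse of $\tau|_{E_+}$) satisfy $r(R)=2^{-\gamma^0}$, $r(L)=2^{-\gamma^\infty}$. Neumann series then give: $A_\lambda$ is invertible on $E_-$ for $\lambda>2^{\delta^0}$, while for $\lambda<2^{\gamma^0}$ it is onto $E_-$ with one–dimensional kernel $\mathbb{R}u_\lambda^-$, $u_\lambda^-:=(\lambda^{-k})_{k\le0}$ (the right inverse $R(I-\lambda R)^{-1}$ is bounded, and $u_\lambda^-\in E_-$ precisely because $\lambda<2^{\gamma^0}$); dually, $D_\lambda$ is invertible on $E_+$ for $\lambda>2^{\delta^\infty}$, while for $\lambda<2^{\gamma^\infty}$ it is bounded below, with ${\rm Im}\,D_\lambda=\{(a_k)_{k\ge1}\in E_+:\sum_{k\ge1}\lambda^ka_k=0\}$ a closed hyperplane (left inverse $(I-\lambda L)^{-1}L$; the hyperplane comes from solving the recurrence explicitly). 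The role of $C_\lambda$ is that $C_\lambda u_\lambda^-=e_1\notin{\rm Im}\,D_\lambda$ (as $\lambda\ne0$). Feeding this into the triangular form shows: for $\lambda\in(0,2^\gamma)$ the operator $\mn$ is bijective — adding a multiple of $u_\lambda^-$ to a preimage shifts $C_\lambda$ by a multiple of $e_1$, supplying the codimension ${\rm Im}\,D_\lambda$ lacks; and for $\lambda\in(2^{\delta^0},2^{\gamma^\infty})$, an interval that is nonempty exactly in case (ii), $A_\lambda$ is invertible while $D_\lambda$ is bounded below with a hyperplane image, so $\mn$ is bounded below with ${\rm codim}\,\op=1$.

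To identify this hyperplane with $\zw$ I note that the formal identity $\sum_{k\in\hj}\lambda^k(a_{k-1}-\lambda a_k)=0$ shows $f_\lambda$ annihilates $\op$, once $f_\lambda\in E^*=E'$ (here $E$ separable); and $(\lambda^k)_k\in E'$ precisely for $\lambda\in(2^{\delta^0},2^{\gamma^\infty})$, since \eqref{equa16b} forces $s_k\ge c\,2^{\gamma^\infty k}$ as $k\to+\infty$ and $s_k\ge c\,2^{\delta^0 k}$ as $k\to-\infty$ (up to sub–exponential factors), so that $\lambda^k/s_k$ decays geometrically on both tails and $(\lambda^k)_k\in\ell^1(\hj,(1/s_k))\subseteq E'$; also $f_\lambda(e_0)=1\ne0$. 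A codimension count then gives $\op=\zw$, i.e. \eqref{equa15a}.

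It remains to show $\mn$ is \emph{not} bounded below for the remaining $\lambda$, namely $\lambda\in[2^{\gamma^0},2^{\delta^0}]\cup[2^{\gamma^\infty},2^{\delta^\infty}]$ and, only in case (i), $\lambda$ in a possible gap $(2^{\delta^\infty},2^{\gamma^0})$ between these; using $\gamma=\min(\gamma^0,\gamma^\infty)$, $\delta=\max(\delta^0,\delta^\infty)$ one checks that this "bad" set equals $[2^\gamma,2^\delta]$ in case (i) and $[2^{\gamma^0},2^{\delta^0}]\cup[2^{\gamma^\infty},2^{\delta^\infty}]$ in case (ii), matching the statement. For $\lambda\in[2^{\gamma^\infty},2^{\delta^\infty}]$, $\lambda$ lies in the approximate point spectrum of the weighted shift $\tau|_{E_+}$ (classically a closed annulus, outer radius $r(\tau|_{E_+})=2^{\delta^\infty}$, inner radius $r(L)^{-1}=2^{\gamma^\infty}$), and since $(\tau-\lambda I)a=(\tau|_{E_+}-\lambda I)a$ for $a$ supported on $\{k\ge1\}$, its approximate eigenvectors (automatically so supported) are ones for $\mn$; symmetrically, for $\lambda\in[2^{\gamma^0},2^{\delta^0}]$ one uses $\widetilde\tau$, taking the approximate eigenvectors supported on $\{k\le-m\}$ with $m\to\infty$, so that the rank–one coupling $C_\lambda$ (and the index–leakage of $\tau$) plays no role. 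In the gap case $2^{\delta^\infty}<\lambda<2^{\gamma^0}$, \eqref{equa16b} forces $u_\lambda:=(\lambda^{-k})_{k\in\hj}\in\ell^1(\hj,(s_k))\subseteq E$, so $\mn u_\lambda=0$ and $\mn$ is not even injective. The main obstacle is exactly this last step: producing the approximate eigenvectors of the weighted shifts for $\lambda$ \emph{inside} the annulus and at the endpoints $2^{\gamma^0},2^{\delta^0},2^{\gamma^\infty},2^{\delta^\infty}$ (where the radii are attained only in the limit), and placing them in long windows of $E$ far from the coordinates $0,1$; here one uses that, by \eqref{equa16b}, $s$ exhibits arbitrarily long windows of prescribed geometric growth or decay arbitrarily far out in each direction.
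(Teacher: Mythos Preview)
Your sufficiency argument---the block--triangular decomposition and the Neumann--series analysis of $A_\lambda$, $D_\lambda$, together with the identification of the hyperplane via $f_\lambda\in E'=E^*$---is essentially the paper's (the paper uses three pieces $E_-\oplus E_0\oplus E_+$ rather than two, but the spectral--radius computations via Lemma~\ref{L1-new} are the same). The gap is in the necessity. You assert that for $\lambda\in[2^{\gamma^\infty},2^{\delta^\infty}]$ the approximate point spectrum of the one--sided shift $\tau|_{E_+}$ is the full closed annulus, calling this ``classical''. But for a general Banach sequence lattice satisfying only \eqref{equa16a} this is not a quotable fact: the unweighted forward shift on $\ell^2$ has approximate point spectrum equal to the unit \emph{circle} only, and whether the interior of the annulus consists of approximate eigenvalues depends on how the weights $s_{k+1}/s_k$ oscillate, not merely on the radii. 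Your final sketch (``$s$ exhibits arbitrarily long windows of prescribed geometric growth'') would close this, but it does not follow from \eqref{equa16b}: those formulae control only $\sup_k s_{k+n}/s_k$, not the intermediate profile $j\mapsto s_{k+j}/s_k$ inside a window, and there is no obvious intermediate--value mechanism that produces a \emph{hump} of $\lambda^{-j}s_{k+j}$ large compared to both endpoints.

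The paper avoids constructing approximate eigenvectors altogether. Assuming $\|\mn x\|\ge c\|x\|$, it tests against $a=(I+\lambda^{-1}\tau+\dots+\lambda^{-n}\tau^n)^2e_k$: the square forces $\|a\|\ge n\lambda^{-n}s_{k+n}$, while $\mn^2 a$ telescopes to just three terms $\lambda^2 e_k-2\lambda^{1-n}e_{k+n+1}+\lambda^{-2n}e_{k+2n+2}$. Comparing norms yields, for one large $n$ and \emph{every} $k$, the ``no--peak'' inequality $\nu_{k+n}<\max(\nu_k,\nu_{k+2n})$ with $\nu_j=\lambda^{-j}s_j$. The hypotheses $\lambda<2^\delta$ and $\lambda>2^\gamma$ then supply a starting $k$ where $\nu$ increases (resp.\ decreases) by one step of size $n$; the no--peak inequality propagates this to monotonicity of $(\nu_{k+rn})_{r\ge0}$ (resp.\ $(\nu_{k-rn})_{r\ge0}$), which unwinds to $\gamma^\infty\ge\log_2\lambda$ and $\delta^0\le\log_2\lambda$. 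The boundary values $2^{\delta^0},2^{\gamma^\infty}$ (and $2^\gamma,2^\delta$) are then handled by openness of the set of Fredholm operators, not by explicit eigenvectors. This combinatorial device is the missing idea in your proposal.
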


\begin{proof}
First, if $\lambda\in (2^\delta,\infty)$, Lemma \ref{L1-new} implies that $\lambda >r(\tau)$, whence $\mn$ is an isomorphism from ${E}$ onto ${E}$. The same holds also in the case when $\lambda\in (0,2^\gamma)$. Indeed, again by Lemma \ref{L1-new} we have $r(\tau_{-1})<\lambda^{-1},$ and hence the operator $\tau_{-1}-\lambda^{-1} I$ is an isomorphism from ${E}$ onto ${E}$. Thereby, the desired result follows, because 
\begin{equation*}
\label{eu1} 
\mn=\lambda \tau (\lambda^{-1} I-\tau_{-1})
\end{equation*}
and $\tau$ is isomorphic in $E$.

Next, let us determine the possible form of the image $\op$. Since
$$
\mn\left(\sum_{i=0}^{n-1} \lambda^{n-1-i}e_i\right)=e_{n}-\lambda^{n}e_0,\;\;n\in\hj,$$
we get $e_{n}-\lambda^{n}e_0\in\op$ for all $n\in\hj.$ Suppose $f_\lambda$  is a linear functional vanishing at $\op$. Then $f_\lambda (e_{n})=\lambda^{n}f_\lambda (e_{0})$, $n\in\hj$. Therefore, we may assume that $f_\lambda$  corresponds to the sequence $(\lambda^{n})_{n\in\hj}.$ 

Observe that $f_\lambda(\tau_\lambda a)=0$ for every $a=(a_k)_{k\in\hj}\in c_{00}$. Indeed, for each $k\in \mathbb{Z}$ we have
$$ 
\tau_\lambda e_k =\tau_\lambda \left(\sum_{i=0}^{k-1} \lambda^{k-i-1}e_i-\lambda\sum_{i=0}^{k-2} \lambda^{k-i-2}e_i\right)=e_{k}-\lambda^{k}e_0-\lambda(e_{k-1}-\lambda^{k-1}e_0)=e_{k}-\lambda e_{k-1}.
$$
Hence,
$$
f_\lambda(\tau_\lambda e_k)=f_\lambda(e_{k})-\lambda f_\lambda(e_{k-1})=0,\;\;k\in\hj,$$
and the claim is proved. Thus, as $E$ is a separable space, if $f_\lambda\in{E} ^*$, we have $f_\lambda(a)=0$ for all $a\in\op$. So, the Hahn-Banach theorem implies that
\begin{equation}
\label{eq16b} 
\overline{\op}=\zw,
\end{equation}
where $\zw$ is the closed subspace of codimension $1$ in $E$ consisting of all $(a_k)_{k=1}^\infty\in{E}$ satisfying \eqref{equa15a}.
In turn, the condition $f_\lambda\in{E} ^*$ is equivalent to the fact that 
\begin{equation}
\label{eq16a} 
{\sum_{k\in \mathbb{Z}}} \lambda^{k}a_k <\infty,\;\;\mbox{for every}\;\;(a_k)_{k\in \mathbb{Z}}\in E.
\end{equation}
On the other hand, if $f_\lambda\not\in{E} ^*$, we have
\begin{equation}
\label{eq16c} 
\overline{\op}={E}.
\end{equation}

Assume now that ${\delta^0}<{\gamma^\infty}$ and $\lambda\in (2^{{\delta^0}},2^{{\gamma^\infty}})$. Let us show that $f_\lambda\in{E} ^*$ and $\op$ is closed.

Choose $\varepsilon>0$ so that ${\delta^0} +\varepsilon <\log_2\lambda <{\gamma^\infty} -\varepsilon$. Then, by the definition of indices ${\delta^0}$ and ${\gamma^\infty}$, for some $C>0$ we get
$$
\sup_{k\ge 0}\frac{s_k}{s_{k+n}}\,\le\,C2^{-n({\gamma^\infty} -\varepsilon)}\;\;\mbox{and}\;\;\sup_{k\le 0}\frac{s_k}{s_{k-n}}\le C2^{n({\delta^0} +\varepsilon)},\;\;n=0,1,2,\dots,$$
whence
\begin{equation}
\label{eu2} 
s_n ^{-1}\le s_0^{-1}C2^{-n({\gamma^\infty} -\varepsilon)}\;\;\mbox{and}\;\;s_{-n}^{-1}\le s_0 ^{-1} C2^{n({\delta^0} +\varepsilon)},\;\;n\in\mathbb{N}.
\end{equation}
Therefore,
$$
\sum_{n\in\fg} \lambda^{n} s_n ^{-1}\le s_0^{-1}C\left(\sum_{n=0}^\infty (\lambda 2^{-{\gamma^\infty} +\varepsilon})^n+\sum_{n=1}^\infty (\lambda^{-1} 2^{{\delta^0} +\varepsilon})^n\right)<\infty.$$
On the other hand, since $E$ is a Banach lattice, for every $a=(a_k)_{k\in\hj}\in E$ we have $|a_k|\|e_k\|_E\le\|a\|_E$, $k\in\hj$, i.e.,
\begin{equation}
\label{eu3} 
|a_k|\le\|a\|_Es_k^{-1},\;\;k\in\mathbb{Z}.
\end{equation}
Combining the last inequalities, we get
$$
\sum_{n\in\hj} \lambda^{n}a_n\le \|a\|_E\sum_{n\in\hj} \lambda^{n}s_n^{-1}<\infty.$$ 
%s_1^{-1}C\|a\|_E\sum_{n=1}^\infty (\lambda/\eta)^{n}<\infty.$$
Thus, condition \eqref{eq16a} is fulfilled for each $a=(a_k)_{k\in\hj}\in E$, yielding $f_\lambda\in{E} ^*.$ 

In order to prove that $\op$ is closed, let us represent ${E}$ as follows:
$$
{E}=E_-+E_0+E_+,$$
where $E_-=[\{e_n\}_{n\le -1}]_{E},$ $E_0=[\{e_0\}]_{E}$ and $E_+=[\{e_n\}_{n\ge 1}]_{E}$ $([A]_{E}$ is the closed linear span of a set $A$ in $E).$
Since 
$$
\op=\mn(E_-)+\mn(E_0)+\mn(E_+),$$
$\mn(E_-)\cap\mn(E_+)=\{0\}$ and the subspace $\mn(E_0)$ is one-dimensional, it suffices to prove only that the sets $\mn(E_-)$
and $\mn(E_+)$ are closed.

Since $2^{{\delta^0}} <\lambda<2^{{\gamma^\infty}}$, $r(\tau P_-)=2^{{\delta^0}}$ and $r(\tau_{-1}P_+)=2^{-{\gamma^\infty}}$ by Lemma \ref{L1-new}, it follows that the operators $\tau P_--\lambda I$ and $\tau_{-1}P_+-\lambda^{-1} I$ map ${E}$ onto ${E}$ isomorphically. Therefore, noting that the subspace $(\tau P_--\lambda I)(E_-)$ is closed in $E$ and $\mn(E_-)= (\tau P_--\lambda I)(E_-)$, we conclude that $\mn(E_-)$ is closed as well. Similarly, because 
$$
\mn(E_+)=-\lambda \tau(\tau_{-1}P_+-\lambda^{-1} I)(E_+)$$ 
and $\tau$ is an isomorphism in ${E}$, it follows that  $\mn(E_+)$ is closed. 

Furthermore, the operator $\mn$ is injective if  
$\lambda\in (0,2^{{\gamma^\infty}})\cup (2^{{\delta^0}},\infty)$.
Indeed, the equation $\mn a=0$ with $a=(a_n)_{n\in\hj}\in E$ implies that $a_{n-1}=\lambda a_n$, i.e., $a_n=\lambda^{-n}a_0$ for all $n\in\hj.$
Assuming that $a\ne 0$, we infer that $(\lambda^{-n})_{n\in\hj}\in E.$

If, for instance, $\lambda<2^{{\gamma^\infty}}$, then combining the first inequality in \eqref{eu2}, where $0<\varepsilon<{\gamma^\infty}-\log_2\lambda$, together with \eqref{eu3}, we deduce that
$$
\lambda^{-k}\le\|(\lambda^{-n})_{n\in\hj}\|_Es_k^{-1}\le s_0^{-1}C\|(\lambda^{-n})_{n\in\hj}\|_E2^{-k({\gamma^\infty} -\varepsilon)},\;\;k\in\mathbb{N},$$
and hence $\lambda\ge 2^{{\gamma^\infty} -\varepsilon}$, which contradicts the choice of $\varepsilon$. Similar reasoning by using the second inequality in \eqref{eu2}, where $0<\varepsilon<\log_2\lambda-{\delta^0}$, shows that the assumption $(\lambda^{-n})_{n\in\hj}\in E$ implies a contradiction if $\lambda>2^{{\delta^0}}$.

Thus, if ${\delta^0}<{\gamma^\infty}$ and $\lambda\in (2^{{\delta^0}},2^{{\gamma^\infty}})$, the operator $\mn$ maps isomorphically $E$ onto  the closed subspace of codimension $1$ in $E$ consisting of all $(a_k)_{k\in\hj}\in{E}$ satisfying condition \eqref{equa15a}.

It remains to establish the necessity of conditions in $(i)$ and $(ii)$. To this end, we assume that the operator $\mn$ is an isomorphic mapping in $E$. Then, there exists $c > 0$ such that for all $x\in{E}$
\begin{equation}
\label{equa18} 
\|\mn x\|_{E}\ge c\|x\|_{E}.
\end{equation}
We prove the following implication:
\begin{equation}
\label{eu21a} 
\lambda\not\in (0,2^\gamma]\cup [2^{\delta},\infty)\;\;\mbox{and inequality}\;\;\eqref{equa18}\;\mbox{holds}\;\;\Longrightarrow\;\; \lambda\in [2^{{\delta^0}},2^{{\gamma^\infty}}].
\end{equation}

Let $n\in\mathbb{N}$ and $k\in\hj$ be arbitrary (they will be fixed later). We put
$a:=(I+\lambda^{-1}\tau+\dots+\lambda^{-n}\tau^n)^2e_{k}.$
A direct calculation shows that $a\ge n\lambda^{-n}e_{k+n},$ whence
\begin{equation}
\label{equa19} 
\|a\|_{E}\ge n\lambda^{-n}s_{k+n}.
\end{equation}

Now, we estimate the norm $\|\mn ^2a\|_{E}$ from above. First,
\begin{multline*}
\mn ^2(I+\lambda^{-1}\tau+\dots +\lambda^{-n}\tau^n)^2=\\
=\lambda(\tau-\lambda I)(\lambda^{-(n+1)}\tau^{n+1}-I)(I+\lambda^{-1}\tau+\dots +\lambda^{-n}\tau^n)=\\
=\lambda^{2}I-2\lambda^{-(n-1)}\tau^{n+1}+\lambda^{-2n}\tau^{2n+2}.
\end{multline*}
Consequently,
$$
\mn ^2a=\lambda^{2}e_{k}-2\lambda^{-(n-1)}e_{n+k+1}+\lambda^{-2n}e_{2n+k+2},$$
and from the triangle inequality it follows
\begin{eqnarray*}
\|\mn ^2a\|_{E}&\le&\lambda^{2}s_{k}+2\lambda^{-(n-1)}s_{n+k+1}+\lambda^{-2n}s_{2n+k+2}\\
&\le& \lambda^2s_{k}+2\lambda^{-(n-1)}\|\tau\|_{E\to E}s_{n+k}+\lambda^{-2n}\|\tau\|_{E\to E}^2s_{2n+k}.
\end{eqnarray*}
Hence,
$$
\|\mn ^2a\|_{E}-2\lambda\|\tau\|_{E\to E}\cdot \lambda^{-n}s_{k+n}\le 2\max(\lambda^{2},\|\tau\|_{E\to E}^2)\max(s_{k},\lambda^{-2n}s_{2n+k}).$$
Let us observe that \eqref{equa18} and \eqref{equa19} yield
$$
\|\mn ^2a\|_{E}\ge c^2n\lambda^{-n}s_{n+k}.$$
Therefore, choosing $n\in\mathbb{N}$ so that 
\begin{equation}
\label{eu8} 
c^2n>2\lambda\|\tau\|_{E\to E}+2\max(\lambda^{2},\|\tau\|_{E\to E}^2),\end{equation}
from the preceding inequality we infer 
$$
\lambda^{-n}s_{n+k}<\max(s_{k},\lambda^{-2n}s_{2n+k}),$$
or, equivalently,
\begin{equation}
\label{equa20} 
\nu _{n+k}<\max(\nu _{k},\nu _{2n+k})\;\;\mbox{for all}\;\;k\in\hj,
\end{equation}
where $\nu_j:=\lambda^{-j}s_j$, $j\in\hj$.

By assumption, $\lambda <2^\delta$. Therefore, for the chosen $n\in\mathbb{N}$ we can find $k\in\hj$ such that $s_{n+k}>\lambda^{n}s_{k}$, i.e., $\nu _{n+k}>\nu _{k}$. Hence,  \eqref{equa20} yields $\nu _{2n+k}>\nu _{n+k}$. 

Substituting $k + n$ for $k$ in \eqref{equa20} and taking into account that $\nu _{2n+k}>\nu _{n+k}$, we obtain $\nu _{3n+k}>\nu _{2n+k}$. Proceeding in the same way, we see that, for the above $n\in\mathbb{N}$ and $k\in\hj$, the sequence $(\nu _{k+rn})_{r=0}^\infty$ is increasing.

Let $j\ge k$ and $m\in\mathbb{N}.$ We find $1\le r_1\le r_2$ satisfying
$$
k+(r_1-1)n \le j \le k+r_1n\;\;\mbox{and}\;\;k+r_2n \le j+m \le k+(r_2+1)n.$$
Denoting $C_1:=\max_{l=1,2,\dots,n}\|\tau_{-l}\|_{E\to E}$, we have
$$
s_j\le C_1s_{k+r_1n}\;\;\mbox{and}\;\;s_{k+r_2n}\le C_1s_{j+m}.$$
Therefore, since $\nu _{k+r_2n}\ge \nu _{k+r_1n}$, 
\begin{equation}
\label{equa20a} 
\frac{s_{j+m}}{s_j}\ge C_1^{-2}\frac{s_{k+r_2n}}{s_{k+r_1n}}=
C_1^{-2}\frac{\lambda^{r_2n}\nu _{k+r_2n}}{\lambda^{r_1n}\nu _{k+r_1n}}\ge C_1^{-2}\lambda^{n(r_2-r_1)}.
\end{equation}

If $\lambda\ge 1$, from the inequality $m-2n\le(r_2-r_1)n$ and \eqref{equa20a} we deduce that
$$
\frac{s_{j+m}}{s_j}\ge C_1^{-2}\lambda^{m-2n}\;\;\mbox{for all}\;j\ge k\;\mbox{and}\;m\in\mathbb{N},$$
or
$$
\sup_{j\ge k}\frac{s_j}{s_{j+m}}\le C_2\lambda^{-m}\;\;\mbox{for all}\;m\in\mathbb{N}.$$
If $k\le 0$, we immediately get  
\begin{equation}
\label{eu7} 
\sup_{j\ge 0}\frac{s_j}{s_{j+m}}\le C_2\lambda^{-m},\;\;m\ge n.
\end{equation}
In the case when $k>0$, denoting $C_3:=\max_{l=\pm 1,\pm 2,\dots,\pm k}\|\tau_{l}\|_{E\to E}$, for all $0\le j<k$ one has
$$
\frac{s_j}{s_{j+m}}\le\frac{s_j}{s_{k}}\cdot\frac{s_{k+m}}{s_{j+m}}\cdot\frac{s_{k}}{s_{k+m}}\le C_3^2\frac{s_k}{s_{k+m}}.$$
Then, combining the last estimates, we come to inequality \eqref{eu7} again (possibly, with a different constant).
Hence, in view of the definition of ${\gamma^\infty}$, we conclude that ${\gamma^\infty}\ge \log_2\lambda$.

If $0<\lambda<1$, then from the inequality $m\ge(r_2-r_1)n$ and estimate \eqref{equa20a} it follows 
$$
\frac{s_{j+m}}{s_j}\ge C_1^{-2}\lambda^{m}\;\;\mbox{for all}\;j\ge k\;\mbox{and}\;m\in\mathbb{N}.$$
Then, reasoning precisely in the same way as in the case when $\lambda\ge 1$, we again get that ${\gamma^\infty}\ge \log_2\lambda$.

To prove the inequality $\log_2\lambda\ge {\delta^0}$, we will use the hypothesis that $\log_2\lambda>\gamma$. 
For $n$ satisfying \eqref{eu8} we can find $k\in\hj$ such that $s_{n+k}>\lambda^{-n}s_{2n+k}$, i.e., $\nu _{n+k}>\nu _{2n+k}$. Therefore,  by \eqref{equa20}, $\nu _{n+k}<\nu _{k}$. 
Substituting $k -n$ for $k$ in \eqref{equa20}, we obtain $\nu _{k}<\nu _{k-n}$. Proceeding in the same way, we see that for the chosen $n\in\mathbb{N}$ and $k\in\hj$ the sequence $(\nu _{k-rn})_{r=0}^\infty$  is increasing. 

If $j\le k$ and $m\in\mathbb{N},$ then there are $1\le r_1\le r_2$, for which
$$
k-r_1n\le j\le k-(r_1-1)n\;\;\mbox{and}\;\;k-(r_2+1)n\le j-m \le k-r_2n.$$
Setting $C_4:=\max_{l=1,2,\dots,n}\|\tau_{l}\|_{E\to E}$, we have
$$
s_{j}\le C_4s_{k-r_1n}\;\;\mbox{and}\;\;s_{k-r_2n}\le C_4s_{j-m}.$$
Therefore, since $\nu _{k-r_2n}\ge \nu _{k-r_1n}$, 
\begin{equation}
\label{eu10} 
\frac{s_{j}}{s_{j-m}}\le C_4^{2}\frac{s_{k-r_1n}}{s_{k-r_2n}}=
C_4^{2}\frac{\lambda^{-r_1n}\nu_{k-r_1n}}{\lambda^{-r_2n}\nu_{k-r_2n}}\le C_4^{2}\lambda^{n(r_2-r_1)}\;\;\mbox{for all}\;j\le k\;\mbox{and}\;m\in\mathbb{N}.
\end{equation}
If $\lambda\ge 1$, from the inequality $(r_2-r_1)n\le m$ and \eqref{eu10} it follows
$$
\sup_{j\le k}\frac{s_{j}}{s_{j-m}}\le C_4^{2}\lambda^{m}\;\;\mbox{for all}\;m\in\mathbb{N}.$$
Consequently, if $k\ge 0$, we obtain 
\begin{equation*}
%\label{eu7} 
\sup_{j\le 0}\frac{s_{j}}{s_{j-m}}\le C_4^2\lambda^{m},\;\;m\in\mathbb{N}.
\end{equation*}
Arguing as above, one can check that the same inequality holds (maybe, with a different constant) also in the case when $k<0$. From this estimate and  the definition of ${\delta^0}$ it follows immediately that ${\delta^0}\le \log_2\lambda$.

In the case when $0<\lambda<1$, combining the inequality $m\le(r_2-r_1+2)n$ with \eqref{eu10}, we get 
$$
\frac{s_{j}}{s_{j-m}}\le C_4^{2}\lambda^{m-2n}\;\;\mbox{for all}\;j\le k\;\mbox{and}\;m\in\mathbb{N},$$
whence 
$$
\sup_{j\le k}\frac{s_{j}}{s_{j-m}}\le C_5\lambda^{m},\;\;m\in\mathbb{N}.$$
The desired inequality ${\delta^0}\le \log_2\lambda$ can be obtained now by repeating the same reasoning as above.
As a result, implication \eqref{eu21a} is proved.

In conclusion, we need to establish that the numbers $2^\gamma$, $2^\delta$, $2^{{\delta^0}}$ and $2^{{\gamma^\infty}}$ are approximative eigenvalues of the operator $\tau$. For $2^\gamma$ and $2^\delta$ it can be proved precisely in the same way as in \cite[Theorem~6]{A-11} (see also \cite[Lemma~11.3.12]{AK}). Hence, we may assume that $\gamma<\delta^0\le \gamma^\infty<\delta$. In the case when $\delta^0< \gamma^\infty$, the above proof shows that $\tau_\lambda$ is a Fredholm operator whenever $\lambda\in(2^{{\delta^0}},2^{{\gamma^\infty}})$. On the other hand, it is well known (see e.g. \cite[Theorem~III.21]{KG}) that  Fredholm operators form an open set in the space of all bounded linear operators on ${E}$ with respect to the topology generated by the operator norm. Therefore, from the fact that $\tau_\lambda$ fails to be isomorphic for $2^\gamma<\lambda< 2^{{\delta^0}}$ and $2^{{\gamma^\infty}}<\lambda< 2^\delta$ it follows that $\tau_\lambda$ is not isomorphic if $\lambda$ equals $2^{\delta^0}$ or $2^{\gamma^\infty}$. Since the same argument can be applied also in the case when $\delta^0=\gamma^\infty$, the theorem is proved.
\end{proof}

\smallskip

Now, we are ready to prove Theorem \ref{Theorem 4} which gives a description of the set of approximative values of the doubling operator $\sigma$ in a separable r.i. function space $X$ of fundamental type.

\begin{proof}[Proof of Theorem \ref{Theorem 4}]
It suffices to combine the results of Lemma \ref{dilation1}, Theorem \ref{L2-new} for $E=E_X$ and Proposition \ref{Proposition 1}.
\end{proof}

As a consequence, we get Corollary \ref{main1}, showing that the set of $p$ such that $\ell^p$ is symmetrically finitely represented in a separable r.i. function space $X$ of fundamental type either an interval or a union of two intervals. 

\smallskip
For r.i. spaces $X$ of a non-fundamental type, the set ${\mathcal F}(X)$ might be apparently more complicated. Nevertheless, some results can be obtained also in the general case.

Let $X$ be a separable r.i. space on $(0,\infty)$ and let $\phi_X$ be the fundamental function of $X$. Then, by the definition of $E_X$, the dilation indices of the function $M_{\phi_X}$ (see Subsection \ref{prel3}) can be calculated as follows:
\begin{align*}
\mu_{\phi_X} &= -\lim_{n\to\infty}\frac{1}{n}\log_2
\sup_{k\in\mathbb{Z}}\frac{s _k}{s _{n+k}}, 
&\nu_{\phi_X}&=\lim_{n\to\infty}\frac{1}{n}\log_2 \sup_{k\in\mathbb{Z}}
\frac{s_{k+n}}{s _{k}},
\nonumber\\
{\mu_{\phi_X}^0}&= -\lim_{n\to\infty}\frac{1}{n}\log_2 \sup_{k\le 0}\frac{s_{k-n}}{s_{k}}, 
&\nu_{\phi_X}^0&=\lim_{n\to\infty}\frac{1}{n}\log_2 \sup_{k\le 0}\frac{s_k}{s_{k-n}},
\nonumber\\ \qquad
\mu_{\phi_X}^\infty&= -\lim_{n\to\infty}\frac{1}{n}\log_2 \sup_{k\ge 0}\frac{s_k}{s_{n+k}}, 
&{\nu_{\phi_X}^\infty} &=\lim_{n\to\infty}\frac{1}{n}\log_2 \sup_{k\ge 0}\frac{s_{k+n}}{s_k},
%\label{equa16bb}
\end{align*}
where $s_j=\|e_j\|_{E_X}=\|\chi_{\Delta_j}\|_X$, $j\in\hj$.
Therefore, repeating the second part of the proof of  Theorem \ref{L2-new} for $E=E_X$ and applying Proposition \ref{Proposition 1}, we get the following result. 

\begin{theor}\label{Theorem 4f}
Suppose $X$ is a separable r.i. space on $(0,\infty)$. If ${\mu_{\phi_X}^\infty}\le {{\nu_{\phi_X}^0}}$ (resp. ${\mu_{\phi_X}^\infty}>{{\nu_{\phi_X}^0}}$), then the interval $[2^{\mu_{\phi_X}},2^{\nu_{\phi_X}}]$ (resp. the union  $[2^{\mu_{\phi_X}},2^{\nu_{\phi_X}^0}] \cup [2^{\mu_{\phi_X}^\infty},2^{\nu_{\phi_X}}]$) consists of  approximate eigenvalues of the operator $\sigma$ in $X$.
\end{theor}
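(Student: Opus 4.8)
The plan is to transfer the problem to the shift operator $\tau_\lambda=\tau-\lambda I$ on the Banach sequence lattice $E_X$ introduced in Section \ref{dyadic block basis}, and then to re-run the \emph{necessity} half of the proof of Theorem \ref{L2-new} in the situation where $E_X$ need not be of fundamental type. Recall that $s_j=\|e_j\|_{E_X}=\|\chi_{\Delta_j}\|_X=\phi_X(2^j)$, so that the six formulas for $\mu_{\phi_X},\nu_{\phi_X},\mu_{\phi_X}^0,\nu_{\phi_X}^0,\mu_{\phi_X}^\infty,\nu_{\phi_X}^\infty$ displayed just before the statement express these indices purely through the numbers $s_j$; formally they are the formulas \eqref{equa16b} for $E=E_X$, but \emph{without} the two-sided identities \eqref{equa16a}, which in general fail here. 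Also, by \eqref{main equiv} together with $\|Sa\|_X=\|a\|_{E_X}$ (equivalently, by parts (i) and (ii) of Proposition \ref{Proposition 1}), if $\tau_\lambda$ is not bounded below in $E_X$, then $\sigma_\lambda$ is not bounded below in $X$, and hence $\lambda$ is an approximate eigenvalue of $\sigma$. So it suffices to prove that $\tau_\lambda$ fails to be bounded below in $E_X$ for every $\lambda$ from the set in the statement (the theorem asserts only this inclusion).

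For the interior of that set I would fix $\lambda$ with $\mu_{\phi_X}<\log_2\lambda<\nu_{\phi_X}$, assume for a contradiction that $\|\tau_\lambda x\|_{E_X}\ge c\|x\|_{E_X}$ for all $x$, and copy the necessity argument from the proof of Theorem \ref{L2-new} for $E=E_X$ word for word: with $n$ chosen as in \eqref{eu8} and the test vector $a=(I+\lambda^{-1}\tau+\dots+\lambda^{-n}\tau^{n})^{2}e_k$ one reaches \eqref{equa20}, and then the monotonicity of the sequences $(\nu_{k+rn})_r$, where $\nu_j=\lambda^{-j}s_j$, yields $\sup_{j\ge0}s_j/s_{j+m}\le C\lambda^{-m}$ and $\sup_{j\le0}s_j/s_{j-m}\le C\lambda^{m}$ for all large $m$. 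The essential point is that this argument uses only the boundedness of $\tau$ and $\tau^{-1}$ on $E_X$ (valid because $X$ is a separable r.i. space, by $\|\sigma_2\|_X\le2$ and Lemma \ref{dilation}) together with the elementary one-sided bounds $\sup_k s_{k+n}/s_k\ge 2^{n\nu_{\phi_X}}$ and $\inf_k s_{k+n}/s_k\le 2^{n\mu_{\phi_X}}$ (submultiplicativity in the displayed formulas) — and never the equivalences \eqref{equa16a}. Reading the two displayed estimates through the formulas for $\nu_{\phi_X}^0$ and $\mu_{\phi_X}^\infty$, one obtains $\nu_{\phi_X}^0\le\log_2\lambda\le\mu_{\phi_X}^\infty$; hence, as soon as $\log_2\lambda\notin[\nu_{\phi_X}^0,\mu_{\phi_X}^\infty]$, the operator $\tau_\lambda$ is not bounded below, i.e. $\lambda$ is an approximate eigenvalue of $\sigma$. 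In case (i) the interval $[\nu_{\phi_X}^0,\mu_{\phi_X}^\infty]$ is empty or a single point, so $(2^{\mu_{\phi_X}},2^{\nu_{\phi_X}})$, with at most one point removed, consists of approximate eigenvalues; in case (ii), $(2^{\mu_{\phi_X}},2^{\nu_{\phi_X}^0})\cup(2^{\mu_{\phi_X}^\infty},2^{\nu_{\phi_X}})$ consists of approximate eigenvalues.

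It remains to incorporate the endpoints. Since $\{\lambda>0:\sigma_\lambda\text{ is bounded below}\}$ is open, the set of approximate eigenvalues of $\sigma$ is closed; taking closures in the previous step produces the full closed intervals $[2^{\mu_{\phi_X}},2^{\nu_{\phi_X}}]$ in case (i), and $[2^{\mu_{\phi_X}},2^{\nu_{\phi_X}^0}]\cup[2^{\mu_{\phi_X}^\infty},2^{\nu_{\phi_X}}]$ in case (ii), \emph{provided} the open intervals above are non-degenerate. When one of them collapses to a point, that point is necessarily $2^{\mu_{\phi_X}}$ or $2^{\nu_{\phi_X}}$, so the only thing still to be checked is that $2^{\mu_{\phi_X}}$ and $2^{\nu_{\phi_X}}$ are always approximate eigenvalues of $\sigma$ in $X$; this is done exactly as in \cite[Theorem~6]{A-11} (cf. \cite[Lemma~11.3.12]{AK}), and since that construction is carried out on characteristic functions it naturally produces the fundamental-function exponents: for $\lambda=2^{\nu_{\phi_X}}$ one uses a telescoping combination $y_N=\sum_{j=0}^{N-1}\lambda^{-j}\sigma_{2^j}x_N$ of dilates of a suitably chosen normalized characteristic function $x_N$, so that $\sigma y_N-\lambda y_N=\lambda(\lambda^{-N}\sigma_{2^N}x_N-x_N)$ is of norm $o(\|y_N\|_X)$, and symmetrically for $2^{\mu_{\phi_X}}$.

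The difficulty is thus of two kinds. First, one must check carefully that the necessity half of the proof of Theorem \ref{L2-new} never uses the norm identities \eqref{equa16a}, only the one-sided bounds $s_{k+n}/s_k\le\|\tau_n\|_{E_X}$ and the limit identifications from the displayed formulas — this is exactly what upgrades the argument from spaces of fundamental type to an arbitrary separable r.i. space. Second, the boundary values: the Fredholm/openness argument used in the proof of Theorem \ref{L2-new} to treat the two interior boundary points rests on the spectral-radius identities of Lemma \ref{L1-new}, which are themselves consequences of \eqref{equa16a} and hence unavailable for a general $E_X$; one is therefore forced to replace it by the softer closedness observation above, together with the direct verification at the two extreme points $2^{\mu_{\phi_X}}$ and $2^{\nu_{\phi_X}}$.
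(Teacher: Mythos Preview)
Your approach is exactly the paper's: the proof of Theorem \ref{Theorem 4f} in the text consists of the single sentence ``repeating the second part of the proof of Theorem \ref{L2-new} for $E=E_X$ and applying Proposition \ref{Proposition 1}'', and you have carried this out, correctly isolating the crucial point that the necessity half of that proof uses only the boundedness of $\tau^{\pm1}$ and the $s_j$-formulas displayed before the statement, never the two-sided equivalences \eqref{equa16a}. Your handling of the boundary values---via closedness of the approximate point spectrum together with the direct \cite[Theorem~6]{A-11} construction at $2^{\mu_{\phi_X}}$ and $2^{\nu_{\phi_X}}$---is in fact a small clarification of the paper, since the Fredholm argument at the end of the proof of Theorem \ref{L2-new} does, as you observe, implicitly rely on Lemma \ref{L1-new} and hence on \eqref{equa16a}.
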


Let $\lambda$ be an approximate eigenvalue of the operator $\sigma$ in a separable r.i. space $X$. Then, by Theorem \ref{Theorem 1}, $p=(\log_2\lambda)^{-1}\in {\mathcal F}(X)$. One can easily see that for every $\epsilon>0$ and $n\in\mathbb{N}$ we can take, as pairwise disjoint and equimeasurable functions in $X$ that correspond to the unit basis vectors of $\ell^p$ (see inequalities \eqref{eq0}), linear combinations of characteristic functions of dyadic intervals, i.e., a finite block basis of the sequence $\{\chi_{\Delta_{n}^k}\}_{n,k}$, $\Delta_n^k=[2^{-n}(k-1), 2^{-n}k)$, $n=0,1,\dots$, $k=1,2,\dots$. This observation combined with Theorem \ref{Theorem 4f} implies the following result, which shows that in the case of r.i. spaces the sequence of characteristic functions of dyadic intervals plays, roughly speaking, the same role as a subsymmetric unconditional basis in spaces with such a basis (it should be compared with the Rosenthal version of Krivine's theorem \cite[p.~198]{Ros}; see Theorem \ref{Th:Rosenthal} in Section \ref{Intro}).

\begin{theor}\label{Theorem 5f}
Let $X$ be a separable r.i. space on $(0,\infty)$ and let $\phi_X$ be the fundamental function of $X$. If ${\mu_{\phi_X}^\infty}\le {{\nu_{\phi_X}^0}}$ (resp. ${\mu_{\phi_X}^\infty}>{{\nu_{\phi_X}^0}}$), then $\ell^p$ is symmetrically finitely represented in the sequence $\{\chi_{\Delta_{n}^k}\}_{n,k}$ whenever $p\in [1/\nu_{\phi_X},1/\mu_{\phi_X}]$ (resp. $p\in [1/\nu_{\phi_X},1/\mu_{\phi_X}^\infty]\cup [1/\nu_{\phi_X}^0,1/\mu_{\phi_X}]$).
\end{theor}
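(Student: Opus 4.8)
The plan is to deduce Theorem \ref{Theorem 5f} from Theorem \ref{Theorem 4f} together with Theorem \ref{Theorem 1}, by checking that the equimeasurable disjoint functions witnessing symmetric finite representability can always be chosen to be blocks of the sequence $\{\chi_{\Delta_n^k}\}_{n,k}$. First I would fix $p$ in the indicated range; by Theorem \ref{Theorem 4f} the number $\lambda=2^{1/p}$ is an approximate eigenvalue of the doubling operator $\sigma$ in $X$, so there is a normalized sequence $\{x_j\}\subset X$ with $\|\sigma x_j-\lambda x_j\|_X\to 0$. The key point is then to trace through the proof of Theorem \ref{Theorem 1} in \cite{A-11}: the passage from an approximate eigenvector of $\sigma$ to the disjoint equimeasurable systems in \eqref{eq0} is effected by replacing $x_j$ by its image under the averaging projection $Q$ of \eqref{projection Q} (or, equivalently, by working in the sequence lattice $E_X$ via \eqref{main equiv}), so that, after this reduction, one is dealing with functions that are constant on each dyadic interval $\Delta_k=[2^k,2^{k+1})$. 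Since a function constant on each $\Delta_k$ and belonging to the band generated by finitely many of the $\Delta_k$ is, up to further dyadic subdivision, a finite linear combination of the $\chi_{\Delta_n^k}$, the disjoint pieces produced in the construction are automatically finite blocks of $\{\chi_{\Delta_n^k}\}_{n,k}$; and the dilation structure of $X$ guarantees that these blocks can be taken pairwise equimeasurable, as required in the definition of symmetric finite representability in a sequence.

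In more detail, I would argue as follows. Given $n\in\mathbb N$ and $\varepsilon>0$, the proof of Theorem \ref{Theorem 1} yields, from the approximate eigenvector $\{x_j\}$, a single function $x\in X$ supported on a finite union of dyadic intervals $\Delta_k$ and constant on each such interval, together with a partition of its support into $n$ pieces on which the dilation operator acts like multiplication by powers of $\lambda$, producing the two-sided estimate \eqref{eq0}. Rescaling each $\Delta_k$ occurring in the support of $x$ by a suitable negative power of $2$ — equivalently, subdividing it into dyadic subintervals of a common small length — one writes each of the $n$ pieces as $u_i=\sum \alpha_t\,\chi_{\Delta_{N}^{t}}$ for a common fine level $N$, and chooses the index sets so that the $u_i$ are pairwise disjoint and have the same decreasing rearrangement (here one uses that all the $\chi_{\Delta_N^t}$ have equal norm and that the relevant dilations of $X$ are, by the fundamental-type / dilation-index machinery already set up, captured by the norms $s_k=\|\chi_{\Delta_k}\|_X$). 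This makes $\{u_i\}_{i=1}^n$ a finite block basis of $\{\chi_{\Delta_n^k}\}_{n,k}$ consisting of pairwise disjoint, equimeasurable functions satisfying \eqref{eq0}, which is exactly the assertion.

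The main obstacle I anticipate is purely bookkeeping rather than conceptual: one must verify that the construction in \cite{A-11} underlying Theorem \ref{Theorem 1}, which a priori only produces \emph{some} disjoint equimeasurable functions, really can be arranged inside the lattice generated by the dyadic intervals $\Delta_n^k$, and that the inevitable dyadic subdivisions do not disturb the norm estimates. Both of these are controlled by the equivalence \eqref{second view} and by Lemma \ref{dilation}, which reduce all relevant norm computations to the sequence $s_k=\|\chi_{\Delta_k}\|_X$ and the shift operator on $E_X$; since \eqref{main equiv} shows that shifting in $E_X$ corresponds exactly to doubling in $X$, the estimates survive the reduction. Thus the only real work is to make explicit the remark, already noted in the paragraph preceding the statement, that in applying Theorem \ref{Theorem 1} one may always take the witnessing functions to be blocks of $\{\chi_{\Delta_n^k}\}_{n,k}$; granting this, Theorem \ref{Theorem 5f} is immediate from Theorem \ref{Theorem 4f}.
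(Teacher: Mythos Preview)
Your proposal is correct and follows essentially the same approach as the paper: the paper presents Theorem~\ref{Theorem 5f} as an immediate consequence of Theorem~\ref{Theorem 4f} together with the observation (stated in the paragraph just before the theorem) that, when applying Theorem~\ref{Theorem 1}, the disjoint equimeasurable functions witnessing $p\in\mathcal{F}(X)$ can always be taken to be finite linear combinations of the characteristic functions $\chi_{\Delta_n^k}$. Your write-up simply makes this observation more explicit by pointing to the averaging projection $Q$, the sequence lattice $E_X$, and relation~\eqref{main equiv}, which is exactly the machinery underlying the paper's one-line remark.
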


\smallskip

Results obtained allow us to give a description of the set ${\mathcal F}({X})$ in the case when $X$ is a Lorentz or a separable Orlicz sequence space.

\vskip0.5cm

\section{A description of the set of $p$ such that $\ell^p$ is symmetrically finitely represented in Lorentz and Orlicz spaces.}

\subsection{Lorentz spaces.}

Let $1\le q<\infty$, and let $\psi$ be an increasing concave function on $[0,\infty)$ such that $\psi(0)=0$. Recall that the Lorentz space $\Lambda_q(\psi)$ is endowed with the norm
\begin{equation*}
\label{eqLor} 
\|x\|_{\Lambda_q(\psi)}=\left(\int_0^\infty [x^{*}(t)]^{q}\, 
d\psi(t) \right)^{1/q}.
\end{equation*} 
It is easy to check (see also \cite[\S\,II.4.4]{KPS} or \cite[p.~28]{Ma-85}) that $\Lambda_q(\psi)$ is a r.i. space of fundamental type. Therefore, since $\phi_{\Lambda_q(\psi)}=\psi^{1/q}$, the dilation indices of this space can be calculated by formulae \eqref{Ind of Lor}, i.e.,
\begin{align*}
\alpha_{\psi,q} &= -\lim_{n\to\infty}\frac{1}{n}\log_2
\sup_{k\in\mathbb{Z}}\Big(\frac{\psi(2^k)}{\psi(2^{n+k})}\Big)^{1/q}, 
&\beta_{\psi,q} &=\lim_{n\to\infty}\frac{1}{n}\log_2 \sup_{k\in\mathbb{Z}}
\Big(\frac{\psi(2^{n+k})}{\psi(2^k)}\Big)^{1/q},
\nonumber\\
\alpha_{\psi,q}^0&= -\lim_{n\to\infty}\frac{1}{n}\log_2 \sup_{k\le 0}\Big(\frac{\psi(2^{k-n})}{\psi(2^k)}\Big)^{1/q}, 
&\beta_{\psi,q}^0&=\lim_{n\to\infty}\frac{1}{n}\log_2 \sup_{k\le 0}\Big(\frac{\psi(2^{k})}{\psi(2^{k-n})}\Big)^{1/q},
\nonumber\\ \qquad
\alpha_{\psi,q}^\infty&= -\lim_{n\to\infty}\frac{1}{n}\log_2 \sup_{k\ge 0}\Big(\frac{\psi(2^{k})}{\psi(2^{k+n})}\Big)^{1/q}, 
&\beta_{\psi,q}^\infty &=\lim_{n\to\infty}\frac{1}{n}\log_2 \sup_{k\ge 0}\Big(\frac{\psi(2^{n+k})}{\psi(2^k)}\Big)^{1/q}.
%\label{equa16b}
\end{align*}
Taking into account that $\Lambda_q(\psi)$ is separable, as an immediate consequence of Corollary \ref{main1}, we get the following description of the set of all $p$ such that $\ell^p$ is symmetrically finitely represented in Lorentz spaces.

\begin{theor}\label{Theorem 4a}
Let $1\le q<\infty$, and let $\psi$ be an increasing concave function on $[0,\infty)$ such that $\psi(0)=0$. Then, we have:

(i) if ${\alpha_{\psi,q}^\infty}\le {{\beta_{\psi,q}^0}}$, then ${\mathcal F}(\Lambda_q(\psi))=[1/\beta_{\psi,q},1/\alpha_{\psi,q}]$;

(ii) if ${\alpha_{\psi,q}^\infty}>{{\beta_{\psi,q}^0}}$, then ${\mathcal F}(\Lambda_q(\psi))=[1/\beta_{\psi,q},1/\alpha_{\psi,q}^\infty]\cup [1/\beta_{\psi,q}^0,1/\alpha_{\psi,q}]$.
\end{theor}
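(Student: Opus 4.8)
The plan is to check that $\Lambda_q(\psi)$ meets all the hypotheses of Corollary~\ref{main1} and then simply read off the conclusion; there is no separate difficulty beyond one routine verification.

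First recall (Subsection~\ref{prel2}) that for $1\le q<\infty$ and $\psi$ increasing, concave, with $\psi(0)=0$, the space $\Lambda_q(\psi)$ is a separable r.i. space whose fundamental function is $\phi_{\Lambda_q(\psi)}=\psi^{1/q}$. So the only point that requires verification is that $\Lambda_q(\psi)$ is of fundamental type, i.e. that \eqref{equa main assump} holds for $X=\Lambda_q(\psi)$. I would obtain this from the explicit form of the $\Lambda_q(\psi)$-norm: since $(\sigma_\tau x)^*(t)=x^*(t/\tau)$, the substitution $t=\tau s$ gives $\|\sigma_\tau x\|_{\Lambda_q(\psi)}^q=\int_0^\infty x^*(s)^q\,d\psi(\tau s)$, and a direct estimate using that $x^*$ is nonincreasing together with the pointwise bound $\psi(\tau s)\le M_\psi(\tau)\psi(s)$ yields $\|\sigma_\tau\|_{\Lambda_q(\psi)\to\Lambda_q(\psi)}\le M_\psi(\tau)^{1/q}$; testing on the characteristic functions $\chi_{(0,a)}$ gives the reverse inequality, so in fact $\|\sigma_\tau\|_{\Lambda_q(\psi)\to\Lambda_q(\psi)}=M_\psi(\tau)^{1/q}=M_{\psi^{1/q}}(\tau)$. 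An entirely analogous computation, restricted to functions supported on $(0,1)$, resp.\ on $(1,\infty)$, and now involving the partial dilation functions $M_\psi^0$ and $M_\psi^\infty$, gives $\|\sigma_{2^n}^0\|_{\Lambda_q(\psi)\to\Lambda_q(\psi)}\asymp M_{\psi^{1/q}}^0(2^n)$ and $\|\sigma_{2^n}^\infty\|_{\Lambda_q(\psi)\to\Lambda_q(\psi)}\asymp M_{\psi^{1/q}}^\infty(2^n)$ with constants independent of $n$. Passing to the limits that define the Boyd and partial dilation indices yields \eqref{equa main assump}; this is also the classical fact recorded in \cite[\S\,II.4.4]{KPS} and \cite[p.~28]{Ma-85}. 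Hence formulae \eqref{Ind of Lor} apply and produce the six displayed expressions $\alpha_{\psi,q}$, $\beta_{\psi,q}$, $\alpha_{\psi,q}^0$, $\beta_{\psi,q}^0$, $\alpha_{\psi,q}^\infty$, $\beta_{\psi,q}^\infty$ for the indices of $\Lambda_q(\psi)$.

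It then remains to apply Corollary~\ref{main1} with $X=\Lambda_q(\psi)$. If $\alpha_{\psi,q}^\infty\le\beta_{\psi,q}^0$ we are in case (i) of that corollary, which gives $\mathcal F(\Lambda_q(\psi))=[1/\beta_X,1/\alpha_X]=[1/\beta_{\psi,q},1/\alpha_{\psi,q}]$; if $\alpha_{\psi,q}^\infty>\beta_{\psi,q}^0$ we are in case (ii), which gives $\mathcal F(\Lambda_q(\psi))=[1/\beta_X,1/\alpha_X^\infty]\cup[1/\beta_X^0,1/\alpha_X]=[1/\beta_{\psi,q},1/\alpha_{\psi,q}^\infty]\cup[1/\beta_{\psi,q}^0,1/\alpha_{\psi,q}]$. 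The only place calling for a little care is the verification of fundamental type above; everything else is a direct substitution, so I do not expect a genuine obstacle.
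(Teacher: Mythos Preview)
Your proposal is correct and follows the same route as the paper: verify that $\Lambda_q(\psi)$ is a separable r.i.\ space of fundamental type with fundamental function $\psi^{1/q}$, and then apply Corollary~\ref{main1}. The paper simply asserts the fundamental-type property with references to \cite[\S\,II.4.4]{KPS} and \cite[p.~28]{Ma-85}, whereas you sketch the computation of $\|\sigma_\tau\|$ directly; that sketch is fine (the key step being that for nonincreasing $x^*$ one can pass from $d\psi(\tau s)$ to $d\psi(s)$ via integration by parts and the pointwise bound on $\psi$), so there is no gap.
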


Assuming that $\mu_\psi>0$, we show that the Banach sequence lattice $E_{\Lambda_q(\psi)}$ coincides (up to equivalence of norms) with the weighted $\ell^q$-space equipped with the following natural norm:
$$
\|a\|_{\ell^q(\psi)}=\Big(\sum_{k\in\hj} |a_{k}|^q\psi(2^{k})\Big)^{1/q}.$$
 
Indeed, from the concavity of $\psi$ it follows that  
$$
\|x\|_{\Lambda_q(\psi)}^q\le \sum_{k\in\hj}\int_{2^k}^{2^{k+1}} [x^{*}(t)]^{q}\,d\psi(t)\le 2\sum_{k\in\hj}[x^{*}(2^k)]^{q}\psi(2^k).$$
On the other hand, since $M_\psi(1/2)< 1$ (see e.g. \cite[Lemma~II.1.4]{KPS}), we have
$$
\|x\|_{\Lambda_q(\psi)}^q\ge \sum_{k\in\hj}[x^{*}(2^{k+1})]^{q}\psi(2^{k+1})\Big(1-\frac{\psi(2^{k})}{\psi(2^{k+1})}\Big)\ge (1-M_\psi(1/2))\sum_{k\in\hj}[x^{*}(2^{k})]^{q}\psi(2^{k}).$$

Therefore,
\begin{equation*}
\label{equa21} 
\|x\|_{\Lambda_q(\psi)}\asymp \Big\|\sum_{k\in\hj} x^{*}(2^{k})e_{k}\Big\|_{\ell^q(\psi)}.
\end{equation*}
Since $\gamma_{\ell^q(\psi)}=\mu_{\psi^{1/q}}=\mu_\psi/q>0$, by Proposition \ref{prop:latttice and symm}, we obtain that $E_{\Lambda_q(\psi)}=\ell^q(\psi)$. 

\smallskip

\subsection{Orlicz spaces.}

Let $N$ be an Orlicz function. Then the Orlicz function space $L_N$ on $(0,\infty)$ is equipped with the norm 
$$
\|x\|_{L_N}=\inf\Big\{u
>0\,:\,\int_0^\infty N(|x(t)|/u) \,dt\leq 1\Big\}.$$ 
Since every Orlicz space is of fundamental type (see e.g. \cite{Boyd} or \cite[Theorem~4.2]{Ma-85}) and $\phi_{L_N}(t)=1/N^{-1}(1/t)$, $t>0$, where $N^{-1}$ is the inverse function for $N$, the dilation indices of the space $L_N$ can be calculated by the formulae:
\begin{align}
\alpha_{N} &= -\lim_{n\to\infty}\frac{1}{n}\log_2
\sup_{k\in\mathbb{Z}}\frac{N^{-1}(2^{k-n})}{N^{-1}(2^{k})}, 
&\beta_{N} &=\lim_{n\to\infty}\frac{1}{n}\log_2 \sup_{k\in\mathbb{Z}}
\frac{N^{-1}(2^{k})}{N^{-1}(2^{k-n})},
\nonumber\\
\alpha_{N}^0&= -\lim_{n\to\infty}\frac{1}{n}\log_2 \sup_{k\ge 0}\frac{N^{-1}(2^{k})}{N^{-1}(2^{k+n})}, 
&\beta_{N}^0&=\lim_{n\to\infty}\frac{1}{n}\log_2 \sup_{k\ge 0}\frac{N^{-1}(2^{k+n})}{N^{-1}(2^{k})},
\nonumber\\ \qquad
\alpha_{N}^\infty&= -\lim_{n\to\infty}\frac{1}{n}\log_2 \sup_{k\le 0}\frac{N^{-1}(2^{k-n})}{N^{-1}(2^{k})}, 
&\beta_{N}^\infty &=\lim_{n\to\infty}\frac{1}{n}\log_2 \sup_{k\le 0}\frac{N^{-1}(2^{k})}{N^{-1}(2^{k-n})}.
\label{eq6b}
\end{align}
Recall that an Orlicz space $L_N$ is separable if and only if the function $N$ satisfies the $\Delta_2$-condition (see \cite[\S\,II.10]{KR} or Subsection \ref{prel2}). One can easily check that the latter condition is equivalent to the fact that $\alpha_{N}>0$. Thus, by Corollary \ref{main1}, we get the following result.

\begin{theor}\label{Theorem 4b}
If an Orlicz function $N$ is such that $\alpha_{N}>0$ (see \eqref{eq6b}), then 

(i) if ${\alpha_{N}^\infty}\le {{\beta_{N}^0}}$, then ${\mathcal F}(L_N)=[1/\beta_{N},1/\alpha_{N}]$;

(ii) if ${\alpha_{N}^\infty}>{{\beta_{N}^0}}$, then ${\mathcal F}(L_N)=[1/\beta_{N},1/\alpha_{N}^\infty]\cup [1/\beta_{N}^0,1/\alpha_{N}]$.
\end{theor}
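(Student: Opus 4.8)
The plan is to deduce the assertion directly from Corollary \ref{main1}, so the only work is to check that $L_N$ lies within its scope and that its dilation indices are given by formulas \eqref{eq6b}.

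First, recall (see e.g. \cite{Boyd} or \cite[Theorem~4.2]{Ma-85}) that every Orlicz space $L_N$ on $(0,\infty)$ is of fundamental type, with fundamental function $\phi_{L_N}(t)=1/N^{-1}(1/t)$. Hence, by Lemma \ref{dilation1}, all six dilation indices of $L_N$ are computed from the sequence $s_j:=\|\chi_{\Delta_j}\|_{L_N}=\phi_{L_N}(2^j)=1/N^{-1}(2^{-j})$, $j\in\hj$. A routine change of variables $k\mapsto -k$ in the formulas of Lemma \ref{dilation1} turns them into \eqref{eq6b}. For instance, $s_k/s_{n+k}=N^{-1}(2^{-n-k})/N^{-1}(2^{-k})$, and replacing $k$ by $-k$ gives $N^{-1}(2^{k-n})/N^{-1}(2^{k})$, which is exactly the ratio appearing in the formula for $\alpha_N$ in \eqref{eq6b}; similarly, $s_{k-n}/s_k=N^{-1}(2^{-k})/N^{-1}(2^{n-k})$ becomes $N^{-1}(2^{k})/N^{-1}(2^{k+n})$ under $k\mapsto -k$, the restriction $k\le 0$ in Lemma \ref{dilation1} turning into $k\ge 0$ in \eqref{eq6b}, and the same for the remaining four indices.

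Second, we check that $L_N$ is separable precisely when $\alpha_N>0$. Since $L_N$ is a r.i. space on $(0,\infty)$, by Subsection \ref{prel2} its separability is equivalent to $N\in\Delta_2$, i.e. $\sup_{u>0}N(2u)/N(u)<\infty$. On the other hand, $\alpha_N=\alpha_{L_N}$ is the lower Boyd index, and, by submultiplicativity of $n\mapsto\|\sigma_{2^{-n}}\|_{L_N\to L_N}$, we have $\alpha_{L_N}>0$ if and only if $\|\sigma_{2^{-n}}\|_{L_N\to L_N}<1$ for some $n\in\mathbb N$; a direct estimate of the Luxemburg norm of $\sigma_{2^{-n}}x$ (equivalently, the classical identity $\alpha_{L_N}=1/q_N$ relating the Boyd index of $L_N$ to the Matuszewska--Orlicz index $q_N$ of $N$, see e.g. \cite[Proposition~2.b.5]{LT2} or \cite{Ma-85}) shows that this is equivalent to $N\in\Delta_2$. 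This is the only point that needs a short argument rather than a bare reference, and it is the main (though mild) obstacle.

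Finally, $L_N$ is then a separable r.i. space on $(0,\infty)$ of fundamental type whose dilation indices are identified by \eqref{eq6b}, so an application of Corollary \ref{main1} --- part (i) when $\alpha_N^\infty\le\beta_N^0$, part (ii) when $\alpha_N^\infty>\beta_N^0$ --- yields the stated description of $\mathcal F(L_N)$.
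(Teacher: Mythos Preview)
Your proof is correct and follows essentially the same route as the paper: note that $L_N$ is of fundamental type with $\phi_{L_N}(t)=1/N^{-1}(1/t)$ so that its dilation indices are given by \eqref{eq6b}, observe that separability of $L_N$ is equivalent to $N\in\Delta_2$ and hence to $\alpha_N>0$, and then apply Corollary \ref{main1}. The only difference is that you spell out the change of variables linking Lemma \ref{dilation1} to \eqref{eq6b} and sketch the equivalence $\alpha_N>0\Leftrightarrow N\in\Delta_2$, whereas the paper simply asserts both facts.
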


Let $L_N$ be a separable Orlicz space. Show that the space $E_{L_N}$ coincides (with equivalence of norms) with the Banach sequence lattice $U_N$ equipped with the norm
$$
\|a\|_{U_N}:=\inf\left\{u>0:\,\sum_{k\in\hj} 2^{k} N\Big(\frac{|a_k|}{u}\Big)\le 1\right\}.$$

To this end, we check first that
\begin{equation}
\label{equa261} 
\frac12 \|(x^*(2^{k}))_{k\in\hj}\|_{U_N}
\le \|x\|_{L_N}\le \|(x^*(2^{k}))_{k\in\hj}\|_{U_N}.
\end{equation}

Since $L_N$ is a r.i. space, we can assume that $x=x^*$. By using the monotonicity of the function $N$, we have
\begin{equation}
\label{equa27} 
\int_{0}^\infty N\Big(\frac{x(t)}{u}\Big)\,dt=\sum_{k\in\hj}\int_{2^{k}}^{2^{k+1}} N\Big(\frac{x(t)}{u}\Big)\,dt\le\sum_{k\in\hj} 2^{k} N\Big(\frac{x(2^{k})}{u}\Big),
\end{equation}
and similarly in the opposite direction
\begin{equation}
\label{equa28} 
\int_{0}^\infty N\Big(\frac{x(t)}{u}\Big)\,dt\ge \sum_{k\in\hj} 2^{k} N\Big(\frac{x(2^{k+1})}{u}\Big)=\frac12\sum_{k\in\hj} 2^{k} N\Big(\frac{x(2^{k})}{u}\Big).
\end{equation}
Next, assuming that $u > \|(x(2^{k}))\|_{U_N}$, by the definition of the $U_N$-norm and inequality \eqref{equa27}, we infer
$$
\int_{0}^\infty N\Big(\frac{x(t)}{u}\Big)\,dt\le\sum_{k\in\hj} 2^{k} N\Big(\frac{x(2^{k})}{u}\Big)\le 1,$$
which implies that $u\ge \|x\|_{L_N}$. Hence, $\|x\|_{L_N}\le\|(x(2^{k}))\|_{U_N}$.

Conversely, if $u >\|x\|_{L_N}$, then from \eqref{equa28} and the convexity of $N$ it follows
$$
\sum_{k\in\hj} 2^{k} N\Big(\frac{x(2^{k})}{2u}\Big)
\le \frac12\sum_{k\in\hj} 2^{k} N\Big(\frac{x(2^{k})}{u}\Big)\le\int_{0}^\infty N\Big(\frac{x(t)}{u}\Big)\,dt\le 1,$$
whence $2u\ge \|(x(2^{k}))\|_{U_N}$. Consequently, $\|(x(2^{k}))\|_{U_N}\le 2\|x\|_{L_N}$, and so \eqref{equa261} is proved.

Let us prove now that $\gamma_{U_N}=\alpha_{N}$, or equivalently (see
\eqref{eq6b}) that
\begin{equation}
\label{equa281} 
\gamma_{U_N}=-\lim_{n\to\infty}\frac{1}{n}\log_2
\sup_{k\in\mathbb{Z}}\frac{N^{-1}(2^{k-n})}{N^{-1}(2^{k})}.
\end{equation}

Observe that for any $s>0$ and $n\in\mathbb{N}$ we have
\begin{equation}
\label{equa30} 
\frac{N^{-1}(2^{-n}s)}{N^{-1}(s)}\le 2A_n,
\end{equation}
where
$$
A_n:=\sup_{m\in\hj}\frac{N^{-1}(2^{m-n})}{N^{-1}(2^{m})},\;\;n\in\mathbb{N}.$$
Indeed, choosing an integer $m$ so that $s\in (2^{m},2^{m+1}]$, since the inverse function $N^{-1}$ is nondecreasing and concave, we get
$$
\frac{N^{-1}(2^{-n}s)}{N^{-1}(s)}\le \frac{N^{-1}(2^{m-n+1})}{N^{-1}(2^{m})}\le 2\frac{N^{-1}(2^{m-n})}{N^{-1}(2^{m})}\le 2A_n.$$
Moreover, one can readily check that \eqref{equa30} may be rewritten as follows:
\begin{equation}
\label{equa31} 
N\Big(\frac{t}{2A_n}\Big)\le 2^{n}N(t),\;\;t>0.
\end{equation}

Let $n\in\mathbb{N}$, $a=(a_k)_{k\in\hj}\in U_N$ and $u>\|a\|_{U_N}$. Then, by the definition of the $U_N$-norm and \eqref{equa31}, we have
\begin{eqnarray*}
\sum_{k\in\hj} 2^{k} N\Big(\frac{|(\tau_{-n}a)_{k}|}{2A_nu}\Big)&=&\sum_{k\in\hj} 2^{k}N\Big(\frac{|a_{k+n}|}{2A_nu}\Big)\le \sum_{k\in\hj} 2^{k+n} N\Big(\frac{|a_{k+n}|}{u}\Big)\\
&=&\sum_{k\in\hj} 2^{k} N\Big(\frac{|a_{k}|}{u}\Big)\le 1,
\end{eqnarray*}
which implies that $\|\tau_{-n}a\|_{U_N}\le 2A_n$. Therefore, from the 
definition of $A_n$ we infer that 
$$
\|\tau_{-n}\|_{U_N\to U_N}\le 2\sup_{m\in\hj}\frac{N^{-1}(2^{m-n})}{N^{-1}(2^{m})},\;\;n\in\mathbb{N}.$$
Since $\|e_j\|_{U_N}=1/N^{-1}(2^{-j})$ for all $j\in\hj$, the opposite inequality
$$
\|\tau_{-n}\|_{U_N\to U_N}\ge \sup_{m\in\hj}\frac{N^{-1}(2^{m-n})}{N^{-1}(2^{m})},\;\;n\in\mathbb{N},$$
is obvious. Therefore, in view of the definition of $\gamma_{U_N}$, we get \eqref{equa281}, and hence $\gamma_{U_N}=\alpha_{N}$.

Since $L_N$ is separable, $\gamma_{U_N}$ is positive together with $\alpha_{N}$. Combining this with inequalities \eqref{equa261}, by  Proposition \ref{prop:latttice and symm}, we infer that $E_{L_N}=U_N$. 

\begin{remark}
Similar results hold also for separable parts of non-separable Orlicz spaces (see the definition in Subsection \ref{prel2}).
\end{remark}

\vskip0.5cm

\section{Appendix: Closure of the operator $\tau_\lambda$ in Banach sequence lattices}

Here, we prove a complement to Theorem \ref{L2-new}, where for brevity we set $\gamma:=\gamma_E$, $\gamma^0:=\gamma_E^0$,  $\gamma^\infty:=\gamma_E^\infty$, $\delta:=\delta_E$, $\delta^0:=\delta_E^0$, $\delta^\infty:=\delta_E^\infty$.

\begin{theor}\label{L2-new-close}
Suppose a separable Banach sequence lattice $E$ satisfies condition \eqref{equa16a}. Then, we have

(a) if ${\gamma^\infty}\le {{\delta^0}}$ and ${\gamma^0}\le {{\delta^\infty}}$, then the operator $\mn$ is closed in ${E}$ if and only if  $\lambda\in (0,2^\gamma)\cup (2^{\delta},\infty)$;

(b) if ${\gamma^\infty}>{{\delta^0}}$, then $\mn$ is closed in ${E}$ if and only if $\lambda\in (0,2^\gamma)\cup (2^{{\delta^0}},2^{{\gamma^\infty}})\cup (2^{\delta},\infty)$;

(c) if ${\gamma^0}>{{\delta^\infty}}$, then $\mn$ is closed in ${E}$ if and only if $\lambda\in (0,2^\gamma)\cup (2^{{\delta^\infty}},2^{{\gamma^0}})\cup (2^{\delta},\infty)$.

In the cases (a) and (b) if $\mn$ is closed in ${E}$, then it is both an  isomorphism in $E$, while in the case (c) ${\rm Im}\,\mn=E$, but $\mn$ is not injective.
\end{theor}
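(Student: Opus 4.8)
The plan is to treat the closedness problem in parallel with Theorem~\ref{L2-new}, splitting each of the three cases into a ``sufficiency'' part and a ``necessity'' part. For the sufficiency two of the three kinds of intervals are free: by Theorem~\ref{L2-new} the operator $\tau_\lambda$ is an isomorphism of $E$ onto $E$ for $\lambda\in(0,2^\gamma)\cup(2^{\delta},\infty)$, and in case~(b) it is an isomorphism of $E$ onto the hyperplane $\zw$ for $\lambda\in(2^{{\delta^0}},2^{{\gamma^\infty}})$; in both situations $\op$ is closed. The only genuinely new point is the interval $(2^{\delta^\infty},2^{\gamma^0})$ in case~(c). First I would record, by a computation entirely parallel to Lemma~\ref{L1-new} (using $(\tau P_{\hj_+})^{i}a=\sum_{k\ge 0}a_ke_{k+i}$ and $(\tau_{-1}P_{\hj_-})^{i}a=\sum_{k\le 0}a_ke_{k-i}$), that the compressions of $\tau$ and $\tau_{-1}$ to the invariant half-spaces $[\{e_k\}_{k\ge 0}]$ and $[\{e_k\}_{k\le 0}]$ have spectral radii $2^{\delta^\infty}$ and $2^{-\gamma^0}$.

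Then, for $2^{\delta^\infty}<\lambda<2^{\gamma^0}$, I would show $\tau_\lambda$ is onto $E$ by an explicit two-sided resolvent: writing $b=(b_k)_{k\in\hj}\in E$ as $b=b_-+b_+$ with $b_+=P_{\hj_+}b$ and $b_-=b-b_+$, set
\[
a_+:=-\lambda^{-1}\sum_{i=0}^{\infty}\lambda^{-i}\tau^{i}b_+,\qquad
a_-:=\lambda^{-1}\sum_{i=1}^{\infty}\lambda^{i}\tau^{-i}b_-.
\]
Since $\tau^{i}b_+=(\tau P_{\hj_+})^{i}b$ and $\tau^{-i}b_-=(\tau_{-1}P_{\hj_-})^{i}b$ for $i\ge 1$, the bounds $\limsup_i\|\tau^{i}b_+\|_E^{1/i}\le 2^{\delta^\infty}<\lambda$ and $\limsup_i\|\tau^{-i}b_-\|_E^{1/i}\le 2^{-\gamma^0}<\lambda^{-1}$ make both series absolutely convergent in $E$, and a telescoping identity gives $\tau_\lambda a_\pm=b_\pm$, hence $\tau_\lambda(a_-+a_+)=b$. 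For the kernel, $\tau_\lambda a=0$ forces $a_k=\lambda^{-k}a_0$, and splitting $(\lambda^{-k})_{k\in\hj}$ at $k=0$ the same two Neumann series show $(\lambda^{-k})_{k\in\hj}\in E$; thus on $(2^{\delta^\infty},2^{\gamma^0})$ the operator $\tau_\lambda$ is onto with a one-dimensional kernel, so $\op=E$ is closed and $\tau_\lambda$ is not injective, which is exactly the assertion of case~(c) on this interval.

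For the necessity (that $\tau_\lambda$ is not closed for the remaining $\lambda$) the plan rests on a dichotomy. If $\tau_\lambda$ is injective, then $\op$ is closed iff $\tau_\lambda$ is bounded below, and the set of such $\lambda$ was already pinned down inside the proof of Theorem~\ref{L2-new}: implication~\eqref{eu21a}, together with the fact proved there that $2^\gamma,2^{{\delta^0}},2^{{\gamma^\infty}},2^{\delta}$ are approximate eigenvalues of $\tau$, shows $\tau_\lambda$ is bounded below precisely for $\lambda\in(0,2^\gamma)\cup(2^{{\delta^0}},2^{{\gamma^\infty}})\cup(2^{\delta},\infty)$ (the middle factor being empty unless ${\delta^0}<{\gamma^\infty}$); and $\tau_\lambda$ is injective whenever $\lambda\notin[2^{{\gamma^\infty}},2^{{\delta^0}}]$, because then $(\lambda^{-k})_{k\in\hj}\notin E$ by the lattice inequality $|a_k|\le\|a\|_E s_k^{-1}$ and the estimates~\eqref{eu2}. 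When $\tau_\lambda$ does have a (then necessarily one-dimensional) kernel, ``not bounded below'' must be upgraded to ``not closed'', and here I would restrict to a half-space: $[\{e_k\}_{k\ge 0}]$ is $\tau_\lambda$-invariant, $\tau_\lambda$ acts on it as a one-sided weighted shift minus $\lambda$, and solving $\tau_\lambda a=e_m$ there one finds $\|a\|_E/\|e_m\|_E\gtrsim\lambda^{-1}\sup_{j\ge0}\lambda^{-j}s_{m+j}/s_m$, which is unbounded in $m$ as soon as $\lambda<2^{\delta^\infty}$; symmetrically, using $\tau_\lambda=-\lambda\tau(\lambda^{-1}I-\tau_{-1})$ and the $\tau_{-1}$-invariant space $[\{e_k\}_{k\le 0}]$, one gets non-boundedness-below for $\lambda>2^{\gamma^0}$. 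In either case there is $u_l$ in the corresponding half-space with $\|u_l\|_E=1$, $\|\tau_\lambda u_l\|_E\to 0$, and since such $u_l$ have vanishing coordinate at the index ($-1$, resp.\ $1$) where the kernel generator $(\lambda^{-k})_{k\in\hj}$ does not vanish, one gets $\mathrm{dist}(u_l,\mathrm{Ker}\,\tau_\lambda)\ge c>0$; as a closed-range operator satisfies $\|\tau_\lambda x\|_E\ge c'\,\mathrm{dist}(x,\mathrm{Ker}\,\tau_\lambda)$, this forces $\tau_\lambda$ not closed. Running this (together with the injective case) over all $\lambda$ outside the claimed sets finishes the three cases; the threshold values $2^\gamma,2^{\delta},2^{{\delta^0}},2^{{\gamma^\infty}},2^{\delta^\infty},2^{\gamma^0}$ are handled the same way, and the degenerate coincidences ${\gamma^\infty}={\delta^0}$, ${\gamma^0}={\delta^\infty}$ collapse to single points already covered. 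The closing assertions are then read off by checking which of the three structures---isomorphism onto $E$, isomorphism onto the hyperplane $\zw$, surjection with one-dimensional kernel---occurs for each $\lambda$ in the respective closed set.

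The step I expect to be the main obstacle is precisely this last portion of the necessity argument: upgrading ``not bounded below'' to ``not closed'' when $\tau_\lambda$ has a one-dimensional kernel. This needs a careful choice of a $\tau_\lambda$- or $\tau_{-1}$-invariant half-space on which the operator degenerates, an identification---in terms of $\delta^\infty$ and $\gamma^0$---of exactly which $\lambda$ make the restriction fail to be bounded below, and the transversality of the resulting almost-null sequence to the kernel. Keeping the endpoints of all the intervals consistent across the three cases (and with the degenerate sub-cases) is the remaining, purely bookkeeping, difficulty.
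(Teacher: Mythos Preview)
Your sufficiency argument is essentially the paper's: both build an explicit right inverse on $(2^{\delta^\infty},2^{\gamma^0})$ (you via Neumann series on the two $\tau$-invariant half-spaces, the paper via a single formula whose boundedness is checked by duality), and both read off the one-dimensional kernel from $(\lambda^{-k})_{k\in\mathbb Z}\in E$.

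The necessity argument, however, is handled very differently in the paper, and your version has a gap. The paper does \emph{not} try to produce an almost-null sequence in a half-space transversal to the kernel. Instead it observes that if $\tau_\lambda$ is closed and not injective then, by \eqref{eq16c}, $\op=E$; hence the adjoint $\tau_\lambda^{*}=\tau_{-1}-\lambda I$ is an isomorphic embedding of $E'=E^{*}$ into itself. One then reruns the combinatorial argument of Theorem~\ref{L2-new} (implication~\eqref{eu21a}) verbatim in $E'$, where $\|e_k\|_{E'}=s_k^{-1}$. Because passing to $s_k^{-1}$ interchanges the pairs $(\gamma^\infty,\delta^0)$ and $(\delta^\infty,\gamma^0)$, the conclusion comes out as $\lambda\in[2^{\delta^\infty},2^{\gamma^0}]$, exactly what is needed. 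The endpoints are then dispatched by Fredholm-index stability: on the open intervals already analysed $\tau_\lambda$ is Fredholm of index $0$ (outer intervals), $-1$ (case~(b) middle), or $+1$ (case~(c) middle), and since the index is locally constant the operator cannot be closed at a boundary point separating regions with different index.

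The problem with your direct route is the step ``solving $\tau_\lambda a=e_m$ on $[\{e_k\}_{k\ge0}]$ gives $\|a\|_E/\|e_m\|_E\gtrsim\lambda^{-1}\sup_{j\ge0}\lambda^{-j}s_{m+j}/s_m$''. The unique formal solution $a_k=-\lambda^{m-k-1}\chi_{\{k\ge m\}}$ need not lie in $E$ when $\lambda<2^{\delta^\infty}$, so this inequality is not available; and if you use the truncations $a^{(N)}$ you only get $\tau_\lambda a^{(N)}=e_m-\lambda^{-N-1}e_{m+N+1}$, where the error term is of the \emph{same} order as the main term of $\|a^{(N)}\|$, so no smallness of $\|\tau_\lambda a^{(N)}\|/\|a^{(N)}\|$ follows. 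If instead you try to run the squared construction of Theorem~\ref{L2-new} on the half-space $\{k\ge0\}$, the increasing-sequence argument starting from $\lambda<2^{\delta^\infty}$ leads to the bound $\sup_{j\ge0}s_j/s_{j+m}\le C\lambda^{-m}$, i.e.\ $\log_2\lambda\le\gamma^\infty$, not the desired $\log_2\lambda\ge\delta^\infty$. The indices $\delta^\infty,\gamma^0$ only emerge after the passage to the dual, which swaps $s_k\leftrightarrow s_k^{-1}$; this is the missing idea in your plan.
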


\begin{remark}
In view of inequalities \eqref{eq19.10}, at most only one of the intervals $(2^{{\delta^0}},2^{{\gamma^\infty}})$ and $(2^{{\delta^\infty}},2^{{\gamma^0}})$ may be non-empty.
\end{remark}

\begin{proof}
First, suppose ${\delta^\infty}<{\gamma^0}$ and $\lambda\in (2^{{\delta^\infty}},2^{{\gamma^0}})$. We show that the operator $\mn$ is a surjective and not injective mapping in $E$.

Let $\varepsilon>0$ satisfy the inequality  
\begin{equation}
\label{eu27} 
{\delta^\infty}+\varepsilon<\log_2\lambda< {\gamma^0}-\varepsilon.
\end{equation}
Then, for some $C>0$ 
\begin{equation}
\label{eu28} 
\sup_{k\le 0}\frac{s_{k-n}}{s_k}\le C2^{-({\gamma^0}-\varepsilon) n}\;\;\mbox{and}\;\;\sup_{k\ge 0}\frac{s_{n+k}}{s_k}\le C2^{({\delta^\infty}+\varepsilon) n},\;\;n\in\mathbb{N}.
\end{equation}

In particular, from \eqref{eu28} it follows that
\begin{equation*}
%\label{eu5} 
s_{-n}\le Cs_{0}2^{-({\gamma^0}-\varepsilon) n}\;\;\mbox{and}\;\;s_{n}\le Cs_{0}2^{({\delta^\infty}+\varepsilon) n},\;\;n=0,1,\dots.
\end{equation*}
Therefore, by the triangle inequality,
$$
\Big\|\sum_{n\in\hj} \lambda^{-n}e_n\Big\|_E\le\sum_{n\in\hj} s_n\lambda^{-n}\le Cs_0\Big(\sum_{n=0}^\infty (\lambda^{-1}2^{\delta^\infty+\varepsilon})^n+ \sum_{n=1}^\infty (\lambda 2^{-({\gamma^0}-\varepsilon)})^{n}\Big)<\infty,$$
whence $a^0:=(\lambda^{-n})_{n\in\hj}\in E$. Since $\tau_\lambda a^0=0$ (see the proof of Theorem \ref{L2-new}), this implies that the operator $\mn$ is not injective.

To prove that ${\rm Im}\,\mn=E$, we consider the operator $T$ defined by
$$
Tx:=-\sum_{n=1}^\infty\sum_{i=1}^n \lambda^{i-n-1}x_i e_n+\sum_{n=1}^\infty\sum_{i=1}^n \lambda^{n-i}x_{-i+1} e_{-n},\;\;\mbox{where}\;x=(x_i)_{i\in\hj}.$$
Let us show that $T$ is bounded on $E$. To this end, recalling that $E'$ is the K\"{o}the dual for $E$ (see Subsection \ref{prel1}), for each $b=(b_i)_{i\in\hj}\in E'$, $\|b\|_{E'}\le 1$, we estimate:
\begin{eqnarray*}
\sum_{n\in\hj} b_n(Tx)_n &=& -\sum_{n=1}^\infty b_n\sum_{i=1}^n \lambda^{i-n-1}x_i+\sum_{n=1}^\infty b_{-n}\sum_{i=1}^n \lambda^{n-i}x_{-i+1}\\ &=&
-\sum_{n=1}^\infty b_n\sum_{k=1}^n \lambda^{-k}x_{n-k+1}+\sum_{n=1}^\infty b_{-n}\sum_{k=1}^n \lambda^{k-1}x_{k-n}\\ &=&
-\sum_{k=1}^\infty \lambda^{-k} \sum_{n=k}^\infty b_nx_{n-k+1}+\sum_{k=1}^\infty \lambda^{k-1}\sum_{n=k}^\infty b_{-n}x_{k-n}\\ &\le&
\sum_{k=1}^\infty \lambda^{-k} \left\|(b_n)_{n=k}^\infty\right\|_{E'}
\Big\|\sum_{n=k}^\infty x_{n-k+1}e_n\Big\|_E\\ &+& \sum_{k=1}^\infty \lambda^{k-1} \left\|(b_{-n})_{n=k}^\infty\right\|_{E'}
\Big\|\sum_{n=k}^\infty x_{k-n}e_{-n}\Big\|_E\\ &\le&
\sum_{k=1}^\infty \lambda^{-k}\left\|\tau_k((x_n)_{n=1}^\infty)\right\|_{E}+\sum_{k=1}^\infty \lambda^{k-1}\left\|\tau_{-k-1}((x_{-n})_{n=0}^\infty)\right\|_{E}.
\end{eqnarray*}
According to \eqref{equa16a}, it holds
$$
\left\|\tau_k((x_n)_{n=1}^\infty)\right\|_{E}\le C\sup_{j\ge 0}\frac{s_{k+j}}{s_j}\;\;\mbox{and}\;\;\left\|\tau_{-k-1}((x_{-n})_{n=0}^\infty)\right\|_{E}\le C\sup_{j\le 0}\frac{s_{j-k-1}}{s_j},\;\;k\in\mathbb{N}.$$
Furthermore, since $E$ is separable, we have $E'=E^*$. Thus, assuming that $\varepsilon>0$ satisfies \eqref{eu27}, we get
\begin{eqnarray*}
\|Tx\|_E&=& \sup\Big\{\sum_{n\in\hj} b_n(Tx)_n:\,\|(b_n)\|_{E'}\le 1\Big\}\\ &\le& C\sum_{k=1}^\infty \left(\lambda^{-k}2^{({\delta^\infty}+\varepsilon)k}+\lambda^{k-1}2^{({\gamma^0}-\varepsilon)(k-1)}\right)\|x\|_E=C'\|x\|_E,
\end{eqnarray*}
and our claim is proved.

Moreover, one can readily check that $\tau_\lambda(Ta)=a$ for each  $a=(a_i)_{i\in\hj}\in E$, whence $\op=E$ if $\lambda\in (2^{{\delta^\infty}},2^{{\gamma^0}})$.

It remains to prove that $\lambda\in (0,2^\gamma)\cup (2^{{\delta^0}},2^{{\gamma^\infty}})\cup(2^{{\delta^\infty}},2^{{\gamma^0}})\cup (2^{\delta},\infty)$ whenever $\mn$ is closed in $E$. If additionally $\mn$ is injective then, by Theorem \ref{L2-new}, $\lambda\in (0,2^\gamma)\cup (2^{{\delta^0}},2^{{\gamma^\infty}})\cup (2^{\delta},\infty)$. Therefore, we need only to prove the following implication: if $\mn$ is closed and not injective, then  $\lambda\in(2^{{\delta^\infty}},2^{{\gamma^0}})$.

If the operator $\mn$ is not injective, then $(\lambda^{-n})_{n\in\hj}\in E,$ and so the sequence $(\lambda^{n})_{n\in\hj}$ defines an unbounded functional on $E$. Therefore, since $\mn$ is closed, by the proof of Theorem \ref{L2-new} (see \eqref{eq16c}), $\op=E$. Hence, in view of the well-known duality result \cite[B.3.9, Proposition~2]{Pietsch},  the dual operator ${\tau_\lambda^*}=\tau_{-1}-\lambda I$ is an isomorphism in the space $E^*=E '$. Then, for some $c>0$ and for all $y\in E'$ we have
\begin{equation}
\label{eu20} 
\|{\tau_\lambda^*} y\|_{E'}\ge c\|y\|_{E'}.
\end{equation}

We prove the implication:
\begin{equation}
\label{eu21} 
\lambda\not\in (0,2^\gamma]\cup [2^{\delta},\infty)\;\;\mbox{and inequality}\;\;\eqref{eu20}\;\mbox{holds}\;\;\Longrightarrow\;\; \lambda\in [2^{{\delta^\infty}},2^{{\gamma^0}}].
\end{equation}

For arbitrary  $n\in\fg$ and $k\in\hj$ (which will be chosen later) we set $b:=(I+\lambda^{-1}\tau^{-1}+\dots+\lambda^{-n}\tau^{-n})^2e_{k+n}.$
A direct calculation indicates that $b\ge n\lambda^{-n}e_{k}.$ Therefore, since $\|e_j\|_{E'}=1/\|e_j\|_{E}=s_j^{-1}$, $j\in \hj$, we have
\begin{equation}
\label{eu22} 
\|b\|_{E'}\ge n\lambda^{-n}s_ {k}^{-1}.
\end{equation}
Moreover,
$$
({\tau_\lambda^*}) ^2(I+\lambda^{-1}\tau^{-1}+\dots+\lambda^{-n}\tau^{-n})^2=[{\tau_\lambda^*}
(I+\lambda^{-1}\tau^{-1}+\dots+\lambda^{-n}\tau^{-n})]^2=$$
$$
=\lambda^{2}(\lambda^{-(n+1)}\tau^{-n-1}-I)^2=
\lambda^{2}I-2\lambda^{1-n}\tau^{-n-1}+\lambda^{-2n}\tau^{-2n-2}.$$
Consequently,
$$
({\tau_\lambda^*})^2b=\lambda^{2}e_{k+n}-2\lambda^{1-n}e_{k-1}+\lambda^{-2n}e_{k-n-2},$$
and from the triangle inequality it follows
\begin{eqnarray*}
\|({\tau_\lambda^*})^2b\|_{E'}&\le &\lambda^{2}s_{k+n}^{-1}+2\lambda^{1-n}s_{k-1}^{-1}+\lambda^{-2n}s_{k-n-2}^{-1}\\
&\le& \lambda^2s_{k+n}^{-1}+2\lambda^{1-n}\|\tau\|_{E\to E}s_{k}^{-1}+\lambda^{-2n}\|\tau\|_{E\to E}^2s_{k-n}^{-1}.
\end{eqnarray*}
Hence,
$$
\|({\tau_\lambda^*})^2b\|_{E'}-2\lambda^{1-n}\|\tau\|_{E\to E}s_{k}^{-1}\le 2\max(\lambda^2,\|\tau\|_{E\to E}^2)\max(s_{k+n}^{-1},\lambda^{-2n}s_{k-n}^{-1}).$$
From inequalities \eqref{eu20} and \eqref{eu22} it follows
$$
\|({\tau_\lambda^*})^2b\|_{E'}\ge c^2n\lambda^{-n}s_{k}^{-1}.$$
Therefore, if $n\in{\mathbb N}$ satisfies 
\begin{equation}
\label{eu23} 
c^2n>2\lambda\|\tau\|_{E\to E}+2\max(\lambda^{2},\|\tau\|_{E\to E}^2),\end{equation}
the preceding inequality yields
$$
\lambda^{-n}s_{k}^{-1}<\max(s_{k+n}^{-1},\lambda^{-2n}s_{k-n}^{-1}),$$
or, equivalently, 
\begin{equation}
\label{equa25} 
\nu _{k}<\max(\nu _{k+n},\nu _{k-n}),
\end{equation}
where $\nu_j:=\lambda^{j}s_j^{-1}$, $j\in\hj$.

Since $\log_2\lambda>\gamma,$ for a fixed $n\in {\mathbb N}$, which satisfies \eqref{eu23}, there is $k\in\hj$ such that $s _{k-n}\,>\,\lambda^{-n}s _k.$ Then, $\nu _{k-n}<\nu _k,$ and from \eqref{equa25} it follows that
$\nu_k<\nu _{k+n}.$ Similarly, replacing $k$ in \eqref{equa25} with $k+n$, we get that $\nu _{k+n}<\nu _{k+2n}$. Repeating in the same way, we conclude that the sequence $(\nu _{k+rn})_{r=0}^\infty$ is increasing.

Given $j\ge k$ and $m\ge 0$, we can find $1\le r_1< r_2$ such that
$$
k+(r_1-1)n\le j\le k+r_1n\;\;\mbox{and}\;\;k+r_2n\le j+m\le k+(r_2+1)n.$$
Then, if $C_1:=\max_{l=1,2,\dots,n}\|\tau_{l}\|_{E\to E}$, we have
$$
s_{k+r_1n}\le C_1s_j\;\;\mbox{and}\;\;s_{j+m}\le C_1s_{k+r_2n}.$$
Hence,
\begin{equation}
\label{equa26} 
\frac{s_{j+m}}{s_j}\le C_1^{2}\frac{s_{k+r_2n}}{s_{k+r_1n}}=
C_1^{2}\frac{\lambda^{r_2n}\nu _{k+r_2n}^{-1}}{\lambda^{r_1n}\nu _{k+r_1n}^{-1}}\le C_1^{2}\lambda^{n(r_2-r_1)}.
\end{equation}
Now, reasoning in the same way as in the final part of the proof of Theorem \ref{L2-new}, we conclude that 
$$
\sup_{j\ge 0}\frac{s _{j+m}}{s _j}\le C\lambda^{m},\;\;m\in\mathbb{N},$$
whence $\log_2\lambda\ge{\delta^\infty}$. 
%$$
%\frac{s _{j+m}}{s _j}\;\le\;2^{(m+2n)\theta}\;\;(j\ge k\,,\,m\ge
%n),$$ откуда
%$$
%\sup_{j\ge k}\frac{s _{j+m}}{s _j}\;\le\;C2^{m\theta}\;\;(m\ge
%n).$$ Если $k>0,$ то для $0\le j\le k$ ввиду (8)
%$$
%\frac{s _{j+m}}{s _j}\;\le\;\frac{s _{k+m}}{s _k}\frac{s
%_k}{s _j}\;\le\; 2^k\frac{s _{k+m}}{s _k},$$ и значит,
%$$
%\sup_{j\ge 0}\frac{s _{j+m}}{s _j}\;\le\;C'2^{m\theta}\;\;(m\ge
%n).$$ В итоге, учитывая (9), получаем соотношение (23).

Next, since $\log_2\lambda<\delta$, for a fixed $n\in\mathbb{N}$ satisfying \eqref{eu23}, there exists $k\in\hj$ such that $s_{n+k}>\lambda^{n}s_k$ or $\nu_{n+k}<\nu_k,$ where as above $\nu_j=\lambda^{j}s_j^{-1}$, $j\in \hj.$ Consequently, in view of \eqref{equa25}, we have $\nu_k<\nu_{k-n}$. Then substituting $k-n$ for $k$ in \eqref{equa25},  we get $\nu_{k-n}<\nu_{k-2n}$ and so on. Thus, $(\nu _{k-rn})_{r=0}^\infty$ is an increasing  sequence.

If $j\le k$ and $m\in\mathbb{N},$ then we have
$$
k-r_1n\le j\le k-(r_1-1)n\;\;\mbox{and}\;\;k-(r_2+1)n\le j-m \le k-r_2n$$
for some $1\le r_1\le r_2$.
Hence, if $C_2:=\max_{l=1,2,\dots,n}\|\tau_{-l}\|_{E\to E}$, we get
\begin{equation*}
%\label{eu10} 
\frac{s_{j}}{s_{j-m}}\ge C_2^{-2}\frac{s_{k-r_1n}}{s_{k-r_2n}}=
C_2^{-2}\frac{\lambda^{-r_1n}\nu_{k-r_1n}^{-1}}{\lambda^{-r_2n}\nu_{k-r_2n}^{-1}}\ge C_2^{-2}\lambda^{n(r_2-r_1)}\;\;\mbox{for all}\;j\le k\;\mbox{and}\;m\in\mathbb{N},
\end{equation*}
and again as in the proof of Theorem \ref{L2-new}, for some $C>0$, it follows
$$
\sup_{j\ge 0}\frac{s _{-j-m}}{s
_{-j}}\le C\lambda^{-m},\;\;m\in\mathbb{N}.$$ 
Thus, $\log_2\lambda\le{\gamma^0}$.

As a result, implication \eqref{eu21} is proved, and it remains to show that the operator $\mn$ is not closed if $\lambda$ is equal to $2^\gamma,$ $2^{{\gamma^0}}$, $2^{{\delta^\infty}}$ or $2^\delta$. 

According to Theorem \ref{L2-new}, $\mn$ is an isomorphic mapping from $E$ onto $E$ and so it is a Fredholm operator with index $0$ for $\lambda\in (0,2^\gamma)\cup (2^\delta,\infty)$, while $\mn$ fails to be isomorphic if $\lambda$ is equal to $2^\gamma$ or $2^\delta$. Therefore, assuming that the operator $\mn$ is closed, say, for $\lambda=2^\gamma$, we would have that it is a Fredholm operator, whose index is $1$ or $-1$ (see the proof of Theorem \ref{L2-new}). But this contradicts the fact that the set of all Fredholm operators with a fixed index is open (see e.g. \cite[Theorem~III.22]{KG}). Moreover, if $\gamma<\delta^\infty<\gamma^0<\delta$, we know that $\mn$ is a Fredholm operator with index $1$ for $\lambda\in (2^{{\delta^\infty}},2^{{\gamma^0}})$ and it is not a closed operator (and so not a Fredholm operator) for $\lambda\in (2^\gamma,2^{{\delta^\infty}})\cup (2^{{\gamma^0}},2^\delta)$. Since the set of all Fredholm operators is open (see e.g. \cite[Theorem~III.21]{KG}), the operator $\mn$ is not closed if $\lambda$ is equal to $2^{{\gamma^0}}$ or $2^{{\delta^\infty}}$. The same reasoning may be applied also in the remaining cases. Thus, the proof is completed.
\end{proof}

\newpage

\end{document}